\documentclass[12pt, reqno, a4paper]{amsart}
\usepackage{times}

\usepackage[utf8]{inputenc}
\usepackage[USenglish]{babel}
\usepackage{amsmath,amsthm,amssymb,amsfonts}
\usepackage{mathtools}
\usepackage{mathrsfs}
\usepackage{bbm}
\usepackage{booktabs}

\usepackage{indentfirst}
\usepackage{enumitem}
\usepackage{xcolor}

\usepackage{float}

\usepackage[breaklinks=true, bookmarksopenlevel=1, bookmarksdepth=2]{hyperref}
\hypersetup{
colorlinks,
   linkcolor={cyan!80!black},
   citecolor={cyan!80!black},
 urlcolor={cyan!80!black}
}

\allowdisplaybreaks

\newtheorem{thm}{Theorem}[section]

\newtheorem{lem}[thm]{Lemma}
\newtheorem{prop}[thm]{Proposition}

\theoremstyle{plain} % just in case the style had changed
\newcommand{\thistheoremname}{}
\newtheorem*{genericthm}{\thistheoremname}

\theoremstyle{definition}

\theoremstyle{remark}
\newtheorem{rem}[thm]{Remark}

\newtheorem*{ntt}{Notation}

\numberwithin{equation}{section}

\newcommand{\N}{\mathbb{N}}      % N = Naturals
\newcommand{\Z}{\mathbb{Z}}      % Z = Integers
\newcommand{\R}{\mathbb{R}}      % R = Reals
\newcommand{\eps}{\varepsilon}   % epsilon

\renewcommand{\pmod}[1]{         % (mod #1)
  ~(\mathrm{mod}~#1)}

\renewcommand{\P}{\mathbb{P}}

\usepackage[margin=1in]{geometry}

\restylefloat{table}

\setlist[itemize]{leftmargin=*}
\setlist[enumerate]{leftmargin=*}

\begin{document}

\title{Thin subbases of Piatetski-Shapiro sequences}%

\author{Christian T\'afula}%
\address{Instituto de Matem\'atica, Estat\'istica\\
e Ci\^encia da Computa\c{c}\~ao\\
Universidade de S\~ao\break Paulo\\
Rua do Mat\~ao, 1010\\
S\~ao Paulo, SP 05508-090\\
Brazil}
\curraddr{}
\email{tafula@ime.usp.br}
\thanks{}

\subjclass[2020]{Primary 11B13, 11B34; Secondary 11P05, 11P32, 11B83}%
\keywords{Piatetski-Shapiro sequences, Piatetski-Shapiro primes, regular variation, thin subbases}%

% ----------------------------------------------------------------
 \begin{abstract}
  For a non-integral real number $c>1$, let $\mathbb{N}_{(c)}:=\{\lfloor n^c\rfloor ~|~ n\in\mathbb{N}\}$. We show that $\mathbb{N}_{(c)}$ contains thin subbases of every order $h\geq 5$ when $1<c<2$, and $h\geq (\lfloor 2c\rfloor+1)(\lfloor 2c\rfloor+2)+1$ when $c>2$. In fact, for every regularly varying function $F$ such that 
  \[ \frac{F(x)}{\log x}\to\infty\quad\text{ and } \quad F(x)\leq (1+o(1))\frac{\Gamma(1+1/c)^h}{\Gamma(h/c)} x^{h/c-1}, \]
  there exists $A\subseteq\mathbb{N}_{(c)}$ with $r_{A,h}(n)\sim F(n)$. We also establish analogous results for $k$-th powers of Piatetski-Shapiro numbers and Piatetski-Shapiro primes for small $c$. %The argument combines a probabilistic subbasis criterion with Hua-type mean value bounds and weighted circle method asymptotics.
 \end{abstract}

\maketitle
% ----------------------------------------------------------------

%%%%%%%%%%%%%%%%%%%%%%%%%%%%%%%%%%%%%%%%%%%%%%%%%%%%%%%%%
\section{Introduction}
 A central question in additive number theory asks whether a given set $A \subseteq \mathbb{N}$ can serve as an additive basis for the integers or for particular congruence classes. For an integer $h \geq 2$ and a subset $A\subseteq \mathbb{N}$, define the representation function
 \begin{equation}
  r_{A,h}(n) := \#\{(x_1, \ldots, x_h) \in A^h ~|~ x_1 + \cdots + x_h = n\}, \label{repfunc}
 \end{equation}  
 which counts the number of ordered $h$-tuples in $A$ summing to $n$. Classical problems in this area include Waring’s problem, which studies $\mathbb{N}^k := \{n^k ~|~ n \in \mathbb{N}\}$, and the Waring--Goldbach problem, concerning $\mathbb{P}^k := \{p^k ~|~ p \text{ prime}\}$. These problems ask whether such sets represent all sufficiently large integers, and if so, seek asymptotic formulas for $r_{A,h}(n)$.

 An intriguing direction asks how ``thin'' a set $A\subseteq \mathbb{N}$ can be while still representing all large integers. Inspired by a problem of Sidon, Erd\H{o}s \cite{erdo56} used probabilistic methods to show the existence of $A \subseteq \mathbb{N}$ with $r_{A,2}(n) \asymp \log n$. Erd\H{o}s and Tetali \cite{erdtet90} later extended this to all $h \ge 2$, showing that there exists $A \subseteq \mathbb{N}$ with $r_{A,h}(n) \asymp \log n$. By a strong form of the Erd\H{o}s--Tur\'an conjecture for additive bases, one expects that for any $A$ with $r_{A,h}(n)>0$ for all large $n$,
 \[ \limsup_{n\to\infty} \frac{r_{A,h}(n)}{\log n} > 0, \]
 so logarithmic growth is essentially best possible (see \cite{taf25} for a heuristic discussion).

 Probabilistic constructions were later extended to structured sets. Vu \cite{vvu00wp} showed that for each $k \geq 1$, there exist subsets $A \subseteq \mathbb{N}^k$ with $r_{A,h}(n) \asymp \log n$ for all sufficiently large $h \ge h_k = O(8^k k^3)$. Wooley \cite{woo03} and Pliego \cite{pli24} refined this, showing that the threshold $h_k$ essentially matches the asymptotic order of $\mathbb{N}^k$ obtained via the circle method. In our earlier work \cite{tafWWG}, we proved an analogous result for prime powers: for $h \geq k^2 -k + O(\sqrt{k})$, there exists $A \subseteq \mathbb{P}^k$ with $r_{A,h}(n) \asymp \log n$ for $n$ in a congruence class depending on $k$ and $h$.

 In this paper, we extend these results to Piatetski-Shapiro sequences, as well as $k$-th powers of Piatetski-Shapiro numbers and primes. For non-integer $c > 1$, set
 \[ \mathbb{N}_{(c)} := \{\lfloor n^c \rfloor ~|~ n \in \mathbb{N}\}, \qquad \mathbb{P}_{(c)} := \mathbb{N}_{(c)} \cap \mathbb{P}. \]
 The sequence $\mathbb{N}_{(c)}$ exhibits pseudorandom behavior for non-integer $c$, and has been studied extensively since Piatetski-Shapiro's theorem on the infinitude of $\mathbb{P}_{(c)}$ for $1 < c < 12/11$ \cite{pia53}. Our main results establish the existence of subbases of $\N_{(c)}$ (for all non-integer $c>1$) and of
 \[ \mathbb{N}_{(c)}^k := \{n^k ~|~ n \in \mathbb{N}_{(c)}\}, \qquad \mathbb{P}_{(c)}^k := \{p^k ~|~ p \in \mathbb{P}_{(c)}\} \]
 (for small $c>1$) with representation functions of prescribed regularly varying growth. The proofs combine probabilistic constructions from \cite{tafWWG} with a Hua-type mean value estimate for $\N_{(c)}$ by Poulias \cite{pou21}, together with exponential sum estimates for $\mathbb{N}_{(c)}^k$ and $\mathbb{P}_{(c)}^k$ due to Akbal--G\"ulo\u{g}lu \cite{akbgul16, akbgul18} and others.

\subsection{Piatetski-Shapiro sequences}
 Recall that a measurable function $F:(0,\infty)\to(0,\infty)$ is \emph{regularly varying} if $\lim_{x\to\infty} F(\lambda x)/F(x)$ exists for every $\lambda>0$.  Such functions take the form
 \[ F(x)=x^{\kappa}\psi(x), \]
 where $\kappa\in\R$ and $\psi$ is \emph{slowly varying} (i.e. $\psi(\lambda x)/\psi(x)\to 1$ as $x\to\infty$ for each $\lambda>0$); in particular $\psi(x)=x^{o(1)}$ (see Bingham--Goldie--Teugels \cite{bingham89}).
 
 In earlier work \cite{taf25, tafWWG}, we studied representation functions with growth prescribed by these functions. Specifically, for each $k \geq 1$ and $h \geq 2H_0(k)+1$, where
 \begin{equation}\label{h0k1}
  H_0(k) := \begin{cases}
          2^{k-1}, & 1\leq k\leq 4,\\
          \frac{1}{2}k(k-1)+\lfloor\sqrt{2k+2}\rfloor, & k\geq 5,
         \end{cases} 
 \end{equation}
 we proved that if $F$ is regularly varying and satisfies
 \[ \lim_{x\to\infty}\frac{F(x)}{\log x} = \infty \qquad \text{and} \qquad F(x)\leq (1+o(1)) \frac{\Gamma(1+1/k)^h}{\Gamma(h/k)} x^{h/k-1}, \]
 then there exists a set $A \subseteq \mathbb{N}^k$ such that
 \[ r_{A,h}(n) \sim \mathfrak{S}_{k,h}(n) F(n), \]
 where $\mathfrak{S}_{k,h}(n)$ is the singular series associated to Waring’s problem:
 \begin{equation}
  S(a,q) := \sum_{r=1}^{q} e\bigg(\frac{ar^k}{q}\bigg), \qquad \mathfrak{S}_{k,h}(n) := \sum_{q\geq 1} \sum_{\substack{a=1 \\ (a,q)=1}}^{q} \frac{S(a,q)^h}{q^h} \, e\bigg(-\frac{na}{q}\bigg). \label{singser}
 \end{equation}

 We will prove a similar result for $\N_{(c)}$. For non-integer $c>1$, let $H_0(c)$ be defined by
 \begin{equation}\label{h0c1}
  H_0(c) := \begin{cases}
          2, & 1< c < 2,\\
          \frac{1}{2}(\lfloor 2c\rfloor +1)(\lfloor 2c\rfloor + 2), & c>2.
         \end{cases} 
 \end{equation}
 Recent work of Madritsch \cite{mad26} treats Waring's problem for pseudo-polynomials and, in particular, obtains the minor arc estimates needed for sums involving $\lfloor n^c\rfloor$ for every non-integral $c>1$. More precisely, his method combines a Fourier approximation to the floor function with estimates for fractional-power exponential sums to bound 
 \[ \bigg|\sum_{n\leq x} e(\alpha\lfloor n^c\rfloor)\bigg| \]
 away from the central major arc. We use this input in Section \ref{sec-pss} to obtain the corresponding weighted minor arc bound for $\N_{(c)}$ (see Lemma \ref{pss-minor-sup}). Moreover, by results of Poulias \cite[Theorem 1.4]{pou21} and Robert--Sargos \cite[Theorem 2]{robsar06}, we have a Hua-type mean value estimate for Piatetski-Shapiro sequences:
 \[ \int_0^1 \bigg|\sum_{n\leq x} e(\alpha \lfloor n^c\rfloor)\bigg|^{s}\,\mathrm{d}\alpha \ll x^{s-c+o(1)} \qquad (s\geq 2H_0(c)). \]
 (See Proposition \ref{pss-hua}). 
 
 The main new point is that, despite the lack of an arithmetic singular series, one can still obtain a weighted asymptotic formula down to exponents slightly below $1/h$, which is the range required by the probabilistic subbasis criterion \cite[Theorem 1.4 and Corollary 1.6]{tafWWG}.
 
 \begin{thm}\label{MTcest}
  Let $c> 1$ be non-integral and $h\geq 2H_0(c)+1$, and let $\delta>0$ be any real number with 
  \[ \delta < \frac{h-2H_0(c)}{2 h(h-1) H_0(c)}. \]
  Then, for any $\omega \geq 1/h - \delta$,
  \begin{equation*}
   \sum_{\substack{x_1,\ldots,x_h \in \N_{(c)} \\ x_1+\cdots+x_h = N}} (x_1\cdots x_h)^{\omega-\frac{1}{c}} = \frac{1}{c^h}\frac{\Gamma(\omega)^h}{\Gamma(h\omega)} \, N^{h\omega-1} + O(N^{h\omega-1-\tau}),
  \end{equation*}
  for some $\tau = \tau(c,h,\delta) >0$.
 \end{thm}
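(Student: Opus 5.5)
We prove the asymptotic with the weighted circle method. Write $\N_{(c)}$ as the image of $m\mapsto \lfloor m^c\rfloor$; this map is injective for $c>1$ once $m$ is large. Set $X := N$ and define the weighted generating function
\[ f_\omega(\alpha) := \sum_{\substack{x\in \N_{(c)}\\ x\leq N}} x^{\omega-\frac1c}\, e(\alpha x), \]
so the left-hand side equals $\int_0^1 f_\omega(\alpha)^h e(-N\alpha)\,\mathrm{d}\alpha$. The heuristic is that the element $\lfloor m^c\rfloor$ has local density $\frac1c x^{1/c-1}$ near $x$. Hence $f_\omega$ should behave like $\frac1c\sum_{x\leq N} x^{\omega-1}e(\alpha x)$, whose $h$-fold convolution gives $\frac{1}{c^h}\frac{\Gamma(\omega)^h}{\Gamma(h\omega)}N^{h\omega-1}$ by the Beta integral. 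We dissect $[0,1)$ into a single central major arc $\mathfrak M=\{|\alpha|\leq N^{-1+\eta}\}$ and the complement $\mathfrak m$, with a small $\eta>0$. Only the arc at $0$ is needed, since $\N_{(c)}$ is equidistributed in residue classes and there is no singular series.

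\emph{Major arc.} By partial summation over $m$, with $x=\lfloor m^c\rfloor = m^c+O(1)$, compare $f_\omega(\alpha)$ with $v_\omega(\alpha):=\frac1c\int_1^N t^{\omega-1}e(\alpha t)\,\mathrm{d}t$. On $\mathfrak M$ the error is $O(N^{\omega-1/c+\eta}+1)$, which is a power saving relative to $N^{\omega}$ when $\eta<1/c$. Summing over $m$ with the rewritten weight $m^{c\omega-1}$ and applying Euler--Maclaurin handles this. Raising to the $h$-th power and using $|v_\omega(\alpha)|\ll \min(N^\omega,|\alpha|^{-\omega})$ gives
\[ \int_{\mathfrak M} f_\omega^h e(-N\alpha)\,\mathrm{d}\alpha = \int_{\mathbb R} v_\omega^h e(-N\alpha)\,\mathrm{d}\alpha + O(N^{h\omega-1-\tau}). \]
This requires $h\omega>1$ so that the tail $\int_{|\alpha|>N^{-1+\eta}}|v_\omega|^h$ converges and saves $N^{-\eta(h\omega-1)}$. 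Since $\omega\geq 1/h-\delta$ may give $h\omega<1$, in that regime we instead bound the tail crudely and will need the major-arc error to absorb it. The cleanest fix is to treat the singular integral directly as $\frac1{c^h}\int_{t_1+\dots+t_h=N}\prod t_i^{\omega-1}$, which converges for every $\omega>0$, and to use the truncated-arc error only in terms of $|v_\omega|^{h-1}$ times the difference. The singular integral then evaluates to $\frac{\Gamma(\omega)^h}{c^h\Gamma(h\omega)}N^{h\omega-1}$.

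\emph{Minor arcs.} This is the main obstacle, because the weight $x^{\omega-1/c}$ may be a negative power and $\omega$ may lie slightly below $1/h$. We split dyadically and use partial summation: on $\mathfrak m$, $\sup|f_\omega|\ll N^{\omega-\sigma}$ for some $\sigma>0$, by the weighted minor arc bound (Lemma \ref{pss-minor-sup}) coming from Madritsch's estimates. The weights decrease for small $x$ only when $\omega<1/c$, and small $x$ contribute at most $\sum_{x\leq Y}x^{\omega-1/c}\ll Y^{\omega}$, so this is harmless. For the mean value we use Proposition \ref{pss-hua} with $s=2H_0(c)$, again transferred to the weighted sum by dyadic partial summation, which gives $\int_0^1|f_\omega|^{2H_0}\ll N^{2H_0\omega-1+o(1)}$. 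Then
\[ \int_{\mathfrak m}|f_\omega|^h \leq \Big(\sup_{\mathfrak m}|f_\omega|\Big)^{h-2H_0}\int_0^1|f_\omega|^{2H_0}\ll N^{h\omega-1-\sigma(h-2H_0)+o(1)}. \]
The delicate point is that for $\omega<1/c$ the weighted Hua bound must be obtained blockwise: on the dyadic block $x\sim Y$ the weight is $Y^{\omega-1/c}$ and the unweighted bound is $Y^{2H_0/c-1+o(1)}$, so the blocks with $Y$ small lose a factor $(N/Y)^{1-2H_0\omega}$ whenever $2H_0\omega<1$. We handle this with Hölder across blocks and accept that loss. This is exactly where the constraint $\delta<\frac{h-2H_0}{2h(h-1)H_0}$ enters: the saving $\sigma(h-2H_0)$ must beat the loss $O(h\delta\cdot\text{const})$ incurred for $\omega=1/h-\delta$. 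We choose $\eta$ and $\sigma$ to optimise this, and collect $\tau>0$ from both arcs.
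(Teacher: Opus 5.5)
\textbf{There is a genuine gap: your minor-arc step fails in exactly the range the theorem is about, and the major arc has the same defect.}

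\textbf{Minor arcs.} Your bound $\int_0^1|f_\omega|^{2H_0}\ll N^{2H_0\omega-1+o(1)}$ is false whenever $2H_0\omega<1$. Since $h>2H_0$, this includes all $\omega$ near $1/h$. Indeed $1=\lfloor 1^c\rfloor\in\N_{(c)}$, so Parseval gives $\int_0^1|f_\omega|^2\geq 1$, and H\"older then gives $\int_0^1|f_\omega|^{2H_0}\geq 1$.

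So the loss you propose to ``accept'' is the fixed power $N^{1-2H_0\omega}\approx N^{(h-2H_0)/h}$, not something of size $N^{O(\delta)}$. No sup bound can recover it. For $\omega<1/h$ one has $\int_{\mathfrak m}|f_\omega|^h\geq 1-o(1)$. This is because $\int_0^1|f_\omega|^h\geq(\int_0^1|f_\omega|^2)^{h/2}\geq 1$, while the pointwise bound $|f_\omega(\alpha)|\ll\min\{N^\omega,\|\alpha\|^{-\omega}\}$ on $\mathfrak M$ shows the major arc carries only $o(1)$ of this. The target $N^{h\omega-1-\tau}$ tends to $0$, so no argument that bounds $|f_\omega|^h$ in absolute value on $\mathfrak m$ can succeed. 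In the same way $\sup_{\mathfrak m}|f_\omega|\gg 1$, so your $\sigma$ cannot exceed $\omega$: the small elements give a piece of $f_\omega$ that does not decay on $\mathfrak m$.

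\textbf{Major arc.} For $h\omega<1$ the absolute tail $\int_{N^{-1+\eta}<\|\alpha\|\leq 1/2}|v_\omega|^h\,\mathrm{d}\alpha$ is $\asymp 1$. Your ``cleanest fix'' does not say how the oscillation of $e(-N\alpha)$ would bring the truncation error below $N^{h\omega-1}$.

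\textbf{The missing idea} is Wooley's device, which the paper uses on both arcs. Let $T^\sharp(\alpha)$ be the part of $f_\omega(\alpha)$ with $N/h\leq x\leq N$. Note that Lemma~\ref{pss-minor-sup} bounds this restricted sum, not $f_\omega$. Every solution of $x_1+\cdots+x_h=N$ has some $x_i\geq N/h$, so $\int_0^1(f_\omega-T^\sharp)^he(-N\alpha)\,\mathrm{d}\alpha=0$. Hence the count equals $\sum_{j=1}^h(-1)^{j+1}\binom{h}{j}\int_0^1T^\sharp(\alpha)^jf_\omega(\alpha)^{h-j}e(-N\alpha)\,\mathrm{d}\alpha$, and every term contains at least one factor $T^\sharp$.

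The weights of $T^\sharp$ are $\asymp N^{\omega-1/c}$, so $\int_0^1|T^\sharp|^{2H_0}\ll N^{2H_0\omega-1+o(1)}$ genuinely holds. On $\mathfrak m$ the paper proceeds as follows:
\begin{itemize}
\item take the supremum of a single copy of $T^\sharp$;
\item apply H\"older with $\theta=1-(h-1)\omega$, measuring the remaining factors in $L^{1/\omega}$;
\item use $1/\omega>h>2H_0$, so those $L^{1/\omega}$ mean values are $N^{o(1)}$.
\end{itemize}
This gives $N^{h\omega-1-\lambda(1-2H_0\theta)+o(1)}$. The hypothesis $\delta<\frac{h-2H_0}{2h(h-1)H_0}$ is precisely the condition $1-2H_0\theta>0$, independent of the size of $\lambda$. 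In your scheme the admissible $\delta$ would have had to depend on $\sigma$, which the statement does not allow.

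On $\mathfrak M$, the approximant $U^\sharp$ of $T^\sharp$ satisfies $|U^\sharp(\alpha)|\ll N^{\omega-1}\|\alpha\|^{-1}$. This extra decay makes the tail $\ll N^{h\omega-1-\eta(h-1)\min\{1,\omega\}}$ even when $h\omega<1$.
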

 
 Combining this with Theorem \ref{main-tech}, we deduce:
 
 \begin{thm}\label{MTc}
  Let $c> 1$ be non-integral, and let $h\geq 2H_0(c)+1$. Let $F$ be a regularly varying function satisfying
  \[ \lim_{x\to\infty}\frac{F(x)}{\log x}=\infty,\qquad F(x)\leq (1+o(1))\frac{\Gamma(1+1/c)^h}{\Gamma(h/c)} x^{h/c-1}. \]
  Then there exists $A\subseteq \N_{(c)}$ such that $r_{A,h}(n)\sim F(n)$.
  
  Moreover, if $\log x \ll F(x) \ll x^{h/c-1}$, then there exists $A\subseteq \N_{(c)}$ with $r_{A,h}(n) \asymp F(n)$.
 \end{thm}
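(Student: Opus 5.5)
The plan is to deduce Theorem \ref{MTc} from Theorem \ref{MTcest} by way of the probabilistic subbasis criterion, Theorem \ref{main-tech} (i.e.\ \cite[Theorem 1.4 and Corollary 1.6]{tafWWG}). Write $F(x)=x^{\kappa}\psi(x)$ with $\psi$ slowly varying. The growth hypotheses force $0\le\kappa\le h/c-1$, and in the boundary case $\kappa=h/c-1$ they give $\limsup\psi\le \Gamma(1+1/c)^h/\Gamma(h/c)$. Set $\omega:=(\kappa+1)/h$, so that $\omega\in[1/h,1/c]$. We then build a random set $A\subseteq\N_{(c)}$ by including each $m=\lfloor n^c\rfloor$ independently with probability
\[
 p_m := \min\big\{1,\ \lambda\, m^{\omega-\frac1c}\,\phi(m)\big\},
\]
where $\phi$ is a slowly varying correction and $\lambda$ a constant. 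These are chosen so that
\[
 \mathbb{E}\, r_{A,h}(N)\sim \lambda^h\,\frac{1}{c^h}\frac{\Gamma(\omega)^h}{\Gamma(h\omega)}\,N^{h\omega-1}\,\phi(N)^h = F(N).
\]
The factor $1/c$ in the exponent accounts for the density of $\N_{(c)}$: since $\#\{m\in\N_{(c)}: m\le x\}\sim x^{1/c}$, the weight $m^{\omega-1/c}$ corresponds to density $m^{\omega-1}$ in the integers.

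The key steps, in order, are as follows.

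First, verify the hypotheses of Theorem \ref{main-tech}. That theorem, as in \cite{tafWWG}, asks for the weighted asymptotic
\[
 \sum_{x_1+\cdots+x_h=N} (x_1\cdots x_h)^{\omega-1/c}\sim C\,N^{h\omega-1}
\]
with a power saving, valid uniformly for exponents $\omega$ in a neighbourhood extending slightly below $1/h$. It also asks for the same kind of bound on subsums in which some variables are fixed or repeated; these control the dependent pairs in Janson's inequality or the Kim--Vu concentration step. Theorem \ref{MTcest} supplies the main asymptotic for every $\omega\ge 1/h-\delta$. The slack $\delta>0$ is exactly what the criterion needs in order to absorb the slowly varying factor $\phi$ (because $\phi(m)=m^{o(1)}$) and to bound the contributions in which variables coincide: there, fewer free summands lead to lower-order exponents, handled by applying Theorem \ref{MTcest} with $h-1$ or fewer variables, or by trivial bounds.

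Second, handle the normalisation. The constant $\frac{1}{c^h}\frac{\Gamma(\omega)^h}{\Gamma(h\omega)}$ at $\omega=1/c$ equals $\Gamma(1+1/c)^h/\Gamma(h/c)$, since $\Gamma(1/c)/c=\Gamma(1+1/c)$. This matches the upper bound on $F$ and guarantees $p_m\le 1$ in the extremal case $\kappa=h/c-1$. In all other cases $p_m\to 0$ and no truncation issue arises.

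Third, obtain concentration. The condition $F(x)/\log x\to\infty$ gives $\mathbb{E}\,r_{A,h}(N)/\log N\to\infty$, which is precisely the hypothesis under which the criterion yields $r_{A,h}(N)\sim\mathbb{E}\,r_{A,h}(N)$ almost surely for all large $N$, via Borel--Cantelli. For the $\asymp$ statement we take $\omega$ with $F(x)\asymp x^{h\omega-1}\psi(x)$ and allow $p_m$ to be only comparable to the target, which removes the need for exact constants. When $F\asymp\log x$ we use the Erd\H{o}s--Tetali-type branch of Corollary 1.6 at $\omega=1/h$, where $\delta>0$ is again essential.

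The main obstacle is checking that the auxiliary ``degenerate'' sums required by Theorem \ref{main-tech} are genuinely of smaller order (by a power of $N$), uniformly in the slowly varying weights. In particular we need that sums over representations with a fixed element $x_1=m$, namely
\[
 \sum_{x_2+\cdots+x_h=N-m}(x_2\cdots x_h)^{\omega-1/c},
\]
are bounded by $N^{h\omega-1-\epsilon}$. My first attempt would be to apply Theorem \ref{MTcest} with $h-1$ summands when $h-1\ge 2H_0(c)+1$. In the boundary case $h=2H_0(c)+1$ I would instead bound these sums by Hölder's inequality together with the Hua-type estimate of Proposition \ref{pss-hua} with $s=2H_0(c)$, which gives $N^{(h-1)\omega-1+o(1)}\cdot N^{\text{small}}$ once $\omega$ is close to $1/h$.
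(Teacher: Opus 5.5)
Your overall route is the paper's: feed Theorem \ref{MTcest} into Theorem \ref{main-tech}. Your normalisation is also right: with $\mathfrak{S}=c^{-h}\Gamma(\omega)^h/\Gamma(h\omega)$, apply the criterion to $F/\mathfrak{S}$, and $\Gamma(1/c)/c=\Gamma(1+1/c)$ turns $F\le(1+o(1))B(n)^h/n$ into the stated upper bound. But you have misread what Theorem \ref{main-tech} asks for. Its hypotheses are exactly three: \eqref{oreg}; the Hua-type bound \eqref{hua-type} for some $H_0$ with $h\ge 2H_0+1$; and \eqref{mainest} for $1/h-\delta\le\omega\le\beta$. There is no separate hypothesis on sums with fixed or repeated variables; those are handled inside the criterion, whose only inputs besides \eqref{oreg} are \eqref{hua-type} and \eqref{mainest}. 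The paper's proof is just the verification of these three hypotheses:
\begin{itemize}
\item $|\N_{(c)}\cap[1,x]|=x^{1/c}+O(1)$ gives \eqref{oreg} with $\beta=1/c$ and $\vartheta\equiv 1$.
\item Proposition \ref{pss-hua} gives \eqref{hua-type} with $H_0=H_0(c)$, which is what fixes the threshold $h\ge 2H_0(c)+1$.
\item Theorem \ref{MTcest} gives \eqref{mainest} with $\mathscr{S}=\N$ and $\mathfrak{S}$ constant.
\end{itemize}
Your proposal never identifies \eqref{hua-type} as a hypothesis to be checked. You invoke Proposition \ref{pss-hua} only for a different purpose.

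That different purpose is where your argument would actually break. The bound you single out as the main obstacle is
$\sum_{x_2+\cdots+x_h=N-m}(x_2\cdots x_h)^{\omega-1/c}\ll N^{h\omega-1-\epsilon}$ uniformly in $m$, and it is false for $\omega$ near $1/h$. Whenever $N-(h-1)\in\N_{(c)}$ (this happens for infinitely many $N$), take $m=N-(h-1)$. Then the single tuple $x_2=\cdots=x_h=1$ contributes $1$, whereas $N^{h\omega-1-\epsilon}\approx N^{-\epsilon}$.

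Nor do your proposed tools give this bound. Theorem \ref{MTcest} with $h-1$ summands is an asymptotic in $N-m$, so it is useless when $N-m$ is small. It also only covers $\omega\ge 1/(h-1)-\delta'$ with $\delta'<\frac{h-1-2H_0(c)}{2(h-1)(h-2)H_0(c)}$, which misses $\omega\approx 1/h$ for small $h$ (e.g.\ $H_0(c)=2$, $6\le h\le 8$). H\"older's inequality with Lemma \ref{pss-weighted-mv} gives $\int_0^1|T|^{h-1}\,\mathrm{d}\alpha\ll N^{\max\{(h-1)\omega-1,0\}+o(1)}=N^{o(1)}$ in that range, not $N^{(h-1)\omega-1+o(1)}$. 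The $\max\{\cdot,0\}$ is genuine: the solution with all variables equal to $1$ already makes that moment $\gg 1$.

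The fix is to drop that paragraph, and also the hand-built random construction, which is internal to the criterion. Then verify the three hypotheses as above; this is exactly the paper's short proof.
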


\subsection{Piatetski-Shapiro powers}
 We now turn to the $\N_{(c)}^k$ setting. Let $k \geq 2$ be an integer and $c > 1$ a non-integer. Assume that the exponential sum over $\N^k_{(c)}$ satisfies
 \begin{equation} \label{PSW}
  \sum_{n \leq x} \mathbbm{1}_{\N^k_{(c)}}(n)\, e(n\alpha) 
  = \frac{1}{c} \sum_{n \leq x} n^{(\frac{1}{c}-1)\frac{1}{k}}\, \mathbbm{1}_{\N^k}(n)\, e(n\alpha) + O(x^{\frac{1}{ck} - P(c,k)}),
 \end{equation}
 uniformly for $\alpha \in [0,1)$, where $0 < P(c,k) \leq 1/ck$ is a constant. This estimate provides the bridge that allows results for $\N^k$ to be extended to $\N^k_{(c)}$. Explicit ranges of $c$ and corresponding values of $P(c,k)$ for which \eqref{PSW} holds are given by Akbal--G\"ulo\u{g}lu \cite[Lemma 5]{akbgul16}:
 
 \begin{center}
  \renewcommand{\arraystretch}{1.35}
  \begin{tabular}{ccc}
   \toprule
   $k$ & range of $c$ & $P(c,k)$ \\ 
   \midrule
   $2$ & $\displaystyle 1<c<\frac{4}{3}$ & $\displaystyle \frac{1}{4c}-\frac{3}{16}$ \\[0.8em]
   $3$ & $\displaystyle 1<c<\frac{16}{15}$ & $\displaystyle \frac{8}{45c}-\frac{1}{6}$ \\[0.8em]
   $4$ & $\displaystyle 1<c<\frac{96}{95}$ & $\displaystyle \frac{96/c-95}{764}$ \\[0.8em]
   $5$ & $\displaystyle 1<c<\frac{224}{223}$ & $\displaystyle \frac{224/c-223}{2235}$ \\[0.8em]
   $k\geq 6$ & $\displaystyle 1<c<\frac{\nu_0(k)}{\nu_0(k)-1}$ & $\displaystyle \frac{\nu_0(k)/c-\nu_0(k)+1}{k(2\nu_0(k)-1)}$ \\
   \bottomrule
  \end{tabular}
 \end{center}
 
 \noindent
 where
 \[ \nu_0(k):= \begin{cases}
                \displaystyle \frac{3k+2}{k\,\sigma(3k/2)}, & \text{if } k\geq 6 \text{ is even}, \\[1.2em]
                \displaystyle \frac{3k+1}{(k-1)\,\sigma((3k-1)/2)}, & \text{if } k\geq 7 \text{ is odd},
               \end{cases} \]
 and $\sigma(r)^{-1} = r(r-1)$, as in Bourgain \cite[Theorem 5]{bou17}.
 
 From \eqref{PSW}, we obtain a direct analogue of Wooley’s \cite[Theorem 1.1]{woo04} for $\N_{(c)}^k$ (Theorem \ref{main-PSS1}). The methods of \cite{tafWWG} then lead to the following.

 \begin{thm}\label{MTPSSkappa}
  Suppose \eqref{PSW} holds, and let $H_0(k)$ be as in \eqref{h0k1}. Let
  \[ h \geq \max\{2H_0(k) + 1,\, 2\lceil \tfrac{1}{2} P(c,k)^{-1}\rceil+1\}. \] 
  Let $F$ be a regularly varying function satisfying
  \[ \lim_{x\to\infty}\frac{F(x)}{\log x}=\infty,\qquad F(x)\leq (1+o(1))\frac{\Gamma(1+1/ck)^h}{\Gamma(h/ck)} x^{h/ck-1}. \]
  Then there exists $A\subseteq \N_{(c)}^k$ such that $r_{A,h}(n)\sim \mathfrak{S}_{k,h}(n) F(n)$.
  
  Moreover, if $\log x \ll F(x) \ll x^{h/ck-1}$, then there exists $A\subseteq \N_{(c)}^k$ with $r_{A,h}(n) \asymp F(n)$.
 \end{thm}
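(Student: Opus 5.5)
The plan is to reduce Theorem \ref{MTPSSkappa} to the probabilistic subbasis criterion \cite[Theorem 1.4 and Corollary 1.6]{tafWWG}, which the paper packages as Theorem \ref{main-tech}. That criterion asks for a weighted asymptotic of the shape
\[ \sum_{\substack{x_1,\ldots,x_h\in \N^k_{(c)}\\ x_1+\cdots+x_h=N}} (x_1\cdots x_h)^{\omega-\frac{1}{ck}} = \mathfrak{S}_{k,h}(N)\,\frac{1}{(ck)^h}\frac{\Gamma(\omega)^h}{\Gamma(h\omega)} N^{h\omega-1} + O(N^{h\omega-1-\tau}), \]
valid uniformly for all $\omega\geq 1/h-\delta$ with some $\delta>0$. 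From this, the random construction (each $m\in\N^k_{(c)}$ is included independently with probability $\asymp m^{\omega-1/ck}$ times a slowly varying factor matched to $F$) gives $r_{A,h}(n)\sim \mathfrak{S}_{k,h}(n)F(n)$. For the $\asymp$ statement one uses the corresponding part of the criterion, and here the lower bound $\mathfrak{S}_{k,h}(n)\gg 1$ for $h\geq 2H_0(k)+1$ is needed. So the work is to establish this weighted analogue of Theorem \ref{MTcest} for $\N^k_{(c)}$, which is the content of Theorem \ref{main-PSS1} (the analogue of Wooley \cite[Theorem 1.1]{woo04}).

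First, prove the unweighted transfer. Write $f_c(\alpha;x)=\sum_{n\le x}\mathbbm{1}_{\N^k_{(c)}}(n)e(n\alpha)$ and $g(\alpha;x)=\frac1c\sum_{n\le x}n^{(1/c-1)/k}\mathbbm{1}_{\N^k}(n)e(n\alpha)$. By \eqref{PSW}, $f_c=g+E$ with $E\ll x^{1/ck-P}$ uniformly in $\alpha$. The weights $(x_i)^{\omega-1/ck}$ are then inserted by partial summation, which preserves both the main-term structure and the error size, up to a factor $x^{\omega-1/ck}$.

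Second, expand $\int_0^1 f_{c,\omega}(\alpha)^h e(-N\alpha)\,d\alpha$ using $f_{c,\omega}=g_\omega+E_\omega$. The pure $g_\omega^h$ term is a weighted Waring sum over $k$-th powers with total weight $n^{\omega-1}$ per variable. Its asymptotic with singular series $\mathfrak{S}_{k,h}(N)$ and a power-savings error, for $\omega$ down to $1/h-\delta$, is exactly what \cite{tafWWG} proves for $h\geq 2H_0(k)+1$. The required inputs are Hua/Vinogradov mean values $\int|\sum e(n^k\alpha)|^{2H_0(k)}\ll x^{2H_0(k)-k+\epsilon}$ and Weyl minor-arc bounds. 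A substitution $n=m^k$ shows the constant works out to $\frac{1}{c^h}\cdot\frac{1}{k^h}\Gamma(\omega)^h/\Gamma(h\omega)$, matching the target.

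Third, control the cross terms $\int|g_\omega|^{h-j}|E_\omega|^j$ with $j\geq1$. Bound $|E_\omega|^j$ pointwise by $N^{j(\omega-P)}$ and the remaining integral by H\"older against $\int|g_\omega|^{2s}$. The hypothesis $h\geq 2\lceil\frac12P^{-1}\rceil+1$ is what makes this work. It ensures that, once $2s$ is taken even with $2s\ge 1/P$, the loss from $E$ (which saves $N^{P}$ per factor) beats the trivial loss compared with the main-term size $N^{h\omega-1}$. Indeed, using Parseval-type bounds, $\int|g_\omega|^{h-j}\le N^{(h-j)\omega}\cdot N^{-1}$ up to epsilon when $h-j\geq 2H_0(k)$. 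When $h-j$ is small one instead uses $\int|g_\omega|^2\ll N^{2\omega-1/k\cdot(\ldots)}$ together with the trivial bound. The terms with $j$ large are handled by $\int|E_\omega|^2\le\int|f_{c,\omega}|^2+\int|g_\omega|^2$, which is just a weighted count.

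The main obstacle is this cross-term bookkeeping. It must hold uniformly for $\omega$ slightly below $1/h$, where the main term $N^{h\omega-1}$ is tiny and every error must save a genuine power. I would first try to get $\int|f_{c,\omega}|^{h}\ll N^{h\omega-1+\epsilon}$ directly, via $|f|\le|g|+|E|$ and the two integral bounds above. I would then choose $\delta$ small in terms of $P$ and $\tau$ from the Waring step, so that all cross terms are $O(N^{h\omega-1-\tau'})$.
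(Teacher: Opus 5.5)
Your architecture matches the paper's: Theorem \ref{main-tech}, fed by the weighted asymptotic of Theorem \ref{main-PSS1}, with the main term taken from \cite[Theorem 4.2]{tafWWG} after transferring via \eqref{PSW}. The gap is in your cross-term step, which fails in exactly the range that matters because two of the bounds you use are false there.
\begin{itemize}
\item For the full weighted sum, partial summation only gives $E_\omega\ll N^{\max\{\omega-P,0\}+o(1)}$. When $\omega<P$, the term $\int_1^x t^{\omega-P-1}\,\mathrm{d}t$ is $\asymp 1$, reflecting a genuine $O(1)$ discrepancy at small $n$. Your hypothesis on $h$ forces $P>1/h$, so this occurs for all $1/h-\delta\le\omega<P$.
\item Likewise $\int_0^1|g_\omega|^{\ell}\,\mathrm{d}\alpha\ll N^{\max\{\ell\omega-1,0\}+o(1)}$, and the maximum is genuine: the coefficient at $n=1$ already gives $\|g_\omega\|_\ell\ge\|g_\omega\|_1\gg 1$. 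For $\omega\le 1/h$ and $\ell=h-j\le h-1$ one has $\ell\omega<1$. For the same reason your intermediate target $\int_0^1|f_{c,\omega}|^h\,\mathrm{d}\alpha\ll N^{h\omega-1+\eps}$ fails for $\omega<1/h$.
\end{itemize}
Consequently, the best your estimates give for $\int_0^1|g_\omega|^{h-j}|E_\omega|^j\,\mathrm{d}\alpha$ is $N^{o(1)}$, against a main term $\asymp N^{h\omega-1}\le 1$. There is no power saving on $1/h-\delta\le\omega\le 1/h$, which is precisely the window needed for $F(x)$ near $\log x$, and shrinking $\delta$ does not help. The loss comes from solutions in which all variables are small, which an absolute-value bound cannot exclude.

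The missing idea is to force one large variable before transferring. Every solution of $x_1+\cdots+x_h=N$ has some $x_i\ge N/h$, so $\int_0^1(T-T^\sharp)^h e(-N\alpha)\,\mathrm{d}\alpha=0$, which gives \eqref{TsharpT} with $T^\sharp$ supported on $N/h\le n\le N$. On such dyadic ranges both maxima disappear: the transfer error is $O(N^{\omega-P})$ (Lemma \ref{trnsfr-PSS2}), and $\|U^\sharp\|_\ell\ll N^{\omega-1/\ell^*+o(1)}$ with $\ell^*=\max\{\ell,2H_0\}$ (Lemma \ref{whuaPPow}). Now write $T^\sharp=\frac1c U^\sharp+O(E^\sharp)$ and $T=\frac1c U+O(E)$, and bound each error term by $\|U^\sharp\|_{h-s}^{j-r}\|U\|_h^{h-j-s}(E^\sharp)^rE^s$. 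Each such term saves at least $\frac{1}{h(h-1)}-(h-j)\delta$. If $r\ge 1$ the saving comes from $r(P-\frac1h)$. If $r=0$ and $s\ge 1$ it comes from $j(\frac{1}{(h-s)^*}-\frac1h)$, since then $U^\sharp$ is measured in $L^{h-s}$ with $h-s\le h-1$. This needs only $h\ge P^{-1}+1$ and $\delta<1/(h(h-1)^2)$.

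Separately, you never verify \eqref{hua-type} for $B=\N^k_{(c)}$, which Theorem \ref{main-tech} requires with $h\ge 2H_0(B)+1$. That, not the cross-term analysis, is the source of the condition $h\ge 2\lceil\frac12 P^{-1}\rceil+1$. By \eqref{PSW}, $\int_0^1|f_c|^{2H}\,\mathrm{d}\alpha\ll\int_0^1|g|^{2H}\,\mathrm{d}\alpha+N^{2H(\frac{1}{ck}-P)}$, so one may take $H_0(B)=\max\{H_0(k),\lceil\frac12 P^{-1}\rceil\}$ (Proposition \ref{PSShua}). Your remark about $2s\ge 1/P$ is exactly this argument, but it belongs in the Hua-type hypothesis, not the cross terms.
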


%%%%%%%%%%%%%%%%%%%%%%%%%%%%%%%
\subsection{Piatetski-Shapiro prime powers}  
 Fix $k \geq 1$. For a prime $p$, let $\theta = \theta(k,p)$ denote the largest integer with $p^\theta \mid k$, and define
 \begin{equation} 
  \nu = \nu(k,p) := \begin{cases}
                     \theta + 2 & \text{if $p=2$ and $2\mid k$,} \\
                     \theta + 1 & \text{otherwise,}
                  \end{cases}
  \qquad
  K(k) := \prod_{(p-1)\mid k} p^\nu. \label{defkk}
 \end{equation}
 If $q$ is a prime coprime to $K(k)$, then $q^k \equiv 1 \pmod{p^\nu}$ whenever $(p-1)\mid k$, since $\varphi(p^\nu) \mid k$ for odd $p$ (and $\varphi(2^\nu)/2 \mid k$ when $p=2$), where $\varphi$ is Euler's totient function. By the Chinese remainder theorem, it follows that any $n$ representable as a sum of $h$ $k$-th powers of primes exceeding $k+1$ must satisfy $n \equiv h \pmod{K(k)}$. For example, $K(2) = 24$, while $K(k) = 2$ when $k$ is odd.

 In \cite{tafWWG} we proved that for each $h\geq 2H_0(k)+1$, where $H_0(k)$ is as in \eqref{h0k1}, and every regularly varying $F$ satisfying
 \[ \lim_{x\to\infty}\frac{F(x)}{\log x} = \infty \qquad \text{and} \qquad F(x)\leq (1+o(1)) \frac{\Gamma(1/k)^h}{\Gamma(h/k)} \frac{x^{h/k-1}}{(\log x)^{h}}, \]
 there exists a subset $A \subseteq \mathbb{P}^k$ such that
 \[ r_{A,h}(n) \sim \mathfrak{S}^*_{k,h}(n) F(n) \qquad (n \equiv h \pmod{K(k)}), \]
 where the singular series associated with the Waring--Goldbach problem is
 \begin{equation}
  S^{*}(q,a) := \sum_{\substack{r=1 \\ (r,q)=1}}^{q} e\bigg(\frac{ar^{k}}{q}\bigg),\qquad \mathfrak{S}^{*}_{k,h}(n) := \sum_{q\geq 1} \sum_{\substack{a=1 \\ (a,q)=1}}^{q} \frac{S^{*}(q,a)^h}{\varphi(q)^h}e\bigg(-\frac{na}{q}\bigg). \label{singserp}
 \end{equation}
 Moreover, for $\log x \ll F(x) \ll x^{h/k-1}/(\log x)^h$, there exists $A\subseteq \P^k$ with $r_{A,h}(n) \asymp F(n)$ ($n \equiv h \pmod{K(k)}$). In particular, there are thin subbases of prime powers.
 
 In the Piatetski-Shapiro setting, let $k \geq 1$ be an integer and $c > 1$ be non-integral. Assume that the exponential sum over $\P^k_{(c)}$ satisfies
 \begin{equation} \label{PSWG}
  \sum_{n \leq x} \mathbbm{1}_{\P^k_{(c)}}(n)\,(\log n)\, e(n\alpha) = \frac{1}{c} \sum_{n \leq x} n^{(\frac{1}{c}-1)\frac{1}{k}}\, \mathbbm{1}_{\P^k}(n)\,(\log n)\, e(n\alpha) + O(x^{\frac{1}{ck}-P^{*}(c,k)})
 \end{equation}
 uniformly for $\alpha\in[0,1)$, for some constant $0 < P^{*}(c,k) \leq 1/ck$. Variants of this estimate appear in the literature, for instance without the logarithmic weight (Akbal--G\"ulo\u{g}lu \cite[Lemma 2.11]{akbgul18}) or with $n^{(1-\frac{1}{c})\frac{1}{k}}$ on the left-hand side (Balog--Friedlander \cite[Theorem 4]{balfri92}). By standard partial summation, these versions are all equivalent to \eqref{PSWG} (see, e.g., Lemma \ref{PSWGequiv}).
 
 Explicit ranges of $c$ together with admissible values of $P^{*}(c,k)$ are given in the following table. More precisely, the quantity $P_0^*(c,k)$ displayed below is a threshold: because the quoted estimates contain harmless $x^{\eps}$ losses or logarithmic factors, any fixed $P^*(c,k)$ satisfying $0<P^*(c,k)<P_0^*(c,k)$ is admissible in \eqref{PSWG}:
 
 \begin{center}
  \small
  \setlength{\tabcolsep}{4pt}
  \renewcommand{\arraystretch}{1.35}
  \begin{tabular}{cccc}
    \toprule
    $k$ & range of $c$ & $P_0^*(c,k)$ & reference \\
    \midrule
    $1$ & $\displaystyle 1<c<\frac{73}{64}$ 
        & $\displaystyle \min\left\{1-\frac{1}{c},\,\frac{73/c-64}{86}\right\}$ 
        & Kumchev \cite[Theorem 2]{kum97} \\[0.8em]
    $2$ & $\displaystyle 1<c<\frac{82}{75}$ 
        & $\displaystyle \frac{82/c-75}{174}$ 
        & Zhang--Zhai \cite[Theorem 2]{zhazha05} \\[0.8em]
    $3$ & $\displaystyle 1<c<\frac{80}{77}$ 
        & $\displaystyle \min\left\{\frac{80/c-77}{468},\,\frac{78/c-75}{471}\right\}$ 
        & Akbal--G\"ulo\u{g}lu \cite[Lemma 2.11]{akbgul18} \\[0.8em]
    $k\geq 4$ & $\displaystyle 1<c<\frac{\nu_0^*(k)}{\nu_0^*(k)-1}$ 
        & $\displaystyle \frac{\nu_0^*(k)/c-\nu_0^*(k)+1}{k(2\nu_0^*(k)-1)}$ 
        & Akbal--G\"ulo\u{g}lu \cite[Lemma 2.11]{akbgul18} \\
    \bottomrule
  \end{tabular}
 \end{center}
 
 \noindent
 where
 \[ \nu_0^*(k):= \begin{cases}
                  \displaystyle k(k+1)^2, & \text{if } 4\leq k\leq 11, \\[0.8em]
                  \displaystyle \frac{2\lfloor 3k/2\rfloor\left(\lfloor 3k/2\rfloor^2-1\right)}
                  {\lfloor 3k/2\rfloor-k}, & \text{if } k\geq 12.
                \end{cases} \]
 
 With \eqref{PSWG}, we can prove an analogue of \cite[Theorem 1.2]{tafWWG} for $\P^k_{(c)}$ (Theorem \ref{main-PSP}). The methods of \cite{tafWWG} then allow us to deduce analogues of our earlier results for $\P^k_{(c)}$.

 \begin{thm}\label{MTPSPP}
  Suppose \eqref{PSWG} holds, and let $H_0(k)$ be as in \eqref{h0k1}. Let
  \[ h \geq \max\{2H_0(k) + 1,\, 2\lceil \tfrac{1}{2} P^{*}(c,k)^{-1}\rceil+1\}. \] 
  Let $F$ be a regularly varying function satisfying
  \[ \lim_{x\to\infty}\frac{F(x)}{\log x} = \infty,\qquad
     F(x)\leq (1+o(1))\frac{1}{c^h} \frac{\Gamma(1/ck)^h}{\Gamma(h/ck)} \frac{x^{h/ck-1}}{(\log x)^h}. \]
  Then there exists $A\subseteq \P_{(c)}^k$ such that $r_{A,h}(n)\sim \mathfrak{S}^{*}_{k,h}(n) F(n)$ \textnormal{($n \equiv h \pmod{K(k)}$)}, where $K(k)$ is as in \eqref{defkk}.
  
  Moreover, if $\log x \ll F(x) \ll x^{h/ck-1}/(\log x)^h$, then there exists $A\subseteq \P_{(c)}^k$ with $r_{A,h}(n) \asymp F(n)$ \textnormal{($n \equiv h \pmod{K(k)}$)}. 
 \end{thm}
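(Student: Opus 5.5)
The plan is to reduce Theorem \ref{MTPSPP} to the probabilistic subbasis criterion of \cite[Theorem 1.4 and Corollary 1.6]{tafWWG}. That criterion needs a weighted asymptotic formula for
\[ \sum_{\substack{x_1,\ldots,x_h\in\P^k_{(c)}\\ x_1+\cdots+x_h=N}} \prod_{i}(\log x_i)\, x_i^{\omega-\frac{1}{ck}} \]
valid uniformly for all $\omega\geq 1/h-\delta$, for some $\delta>0$. The expected main term is
\[ \mathfrak{S}^*_{k,h}(N)\,\frac{1}{(ck)^h}\frac{\Gamma(\omega)^h}{\Gamma(h\omega)}\,N^{h\omega-1}, \qquad N\equiv h \pmod{K(k)}, \]
with a power saving $N^{-\tau}$. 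This is the content of the announced analogue of \cite[Theorem 1.2]{tafWWG}, namely Theorem \ref{main-PSP}, which I will prove first. Once it is available, the criterion produces $A$ by independent random selection: each $p^k\in\P^k_{(c)}$ is included with probability proportional to $(p^k)^{\omega-1/ck}$ times a slowly varying correction tuned to $F$. Here $\omega$ is chosen as $\kappa_F+1$ over $h$, where $F(x)=x^{\kappa_F}\psi(x)$. The concentration and disjointness arguments (Janson/Kim--Vu type) are inherited verbatim from \cite{tafWWG}. The $\asymp$ statement follows from the corresponding corollary in the same way.

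To prove Theorem \ref{main-PSP}, I will transfer the exponential sum. By \eqref{PSWG} and partial summation, the weighted generating function
\[ f_\omega(\alpha)=\sum_{n\le N}\mathbbm{1}_{\P^k_{(c)}}(n)(\log n)\,n^{\omega-\frac1{ck}}e(n\alpha) \]
equals
\[ \frac1c\sum_{n\le N}\mathbbm{1}_{\P^k}(n)(\log n)\,n^{\omega-\frac1k}e(n\alpha)+O\bigl(N^{\omega-P^*(c,k)}\bigr) \]
uniformly in $\alpha$. The $n^{1/ck}$ factors cancel against the weight, leaving exactly the $\P^k$-weight $n^{\omega-1/k}$ used in \cite{tafWWG}. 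Call the main term $\frac1c g_\omega(\alpha)$ and the error $E(\alpha)$. Then
\[ \int_0^1 f_\omega^h(\alpha)e(-N\alpha)\,d\alpha = c^{-h}\int_0^1 g_\omega^h(\alpha)e(-N\alpha)\,d\alpha + \sum_{j\ge1}\binom hj \int_0^1 (c^{-1}g_\omega)^{h-j}E^j e(-N\alpha)\,d\alpha. \]
The first term is handled by \cite[Theorem 1.2]{tafWWG}, since $h\ge 2H_0(k)+1$. That theorem gives the stated main term with $\Gamma(\omega-1/k+1/k)$ structure, which after the factor $c^{-h}$ matches $\frac{1}{c^h}\Gamma(1/ck)^h/\Gamma(h/ck)$ at $\omega=1/ck$.

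The main obstacle is controlling the cross terms uniformly down to $\omega$ slightly below $1/h$, and this is where the condition $h\ge 2\lceil\frac12 P^*(c,k)^{-1}\rceil+1$ enters. For the terms with $j\ge1$, I will bound one factor $E$ by its sup, $N^{\omega-P^*}$, and estimate the rest using Hölder with the Hua-type mean value for $\P^k$. That mean value is $\int_0^1|g_\omega|^{2s}\,d\alpha\ll N^{2s\omega-1+\eps}$ for $2s\ge 2H_0(k)$, via dyadic decomposition and partial summation from the unweighted bound $\int|\sum_{p\le X}e(\alpha p^k)|^{2s}\ll X^{2s-k+\eps}$. Bounding the other $E$ factors trivially, the $j$-th term is
\[ \ll N^{j(\omega-P^*)}\,N^{(h-j)\omega-1+\eps}. \]
This is acceptable only if $h-j\ge 2H_0(k)$. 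Otherwise I will instead take all $h$ factors in $L^\infty$/$L^2$ mixtures, using the trivial $L^2$ bound $\int|E|^2\ll N^{2\omega-1/ck}$ (from Parseval on the difference of the two weighted sums) together with sup bounds $N^{\omega-P^*}$. Pure $E^h$ gives roughly $N^{h\omega-(h-2)P^*-2/ck+\dots}$, and requiring this to beat $N^{h\omega-1}$ is where $h\gtrsim 1/P^*$ should appear. I expect to have to balance these exponents carefully, and to shrink $\delta$ and $\tau$ accordingly, so that all $\omega\ge 1/h-\delta$ are covered. The congruence restriction $N\equiv h\pmod{K(k)}$ and the positivity of $\mathfrak{S}^*_{k,h}$ are imported unchanged from \cite{tafWWG}.
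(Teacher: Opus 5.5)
\textbf{The proposal has a genuine gap in the transfer step.}

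Partial summation from \eqref{PSWG} over the full range $n\le N$ does not give $O(N^{\omega-P^*(c,k)})$. It gives only
\[ f_\omega(\alpha)-\tfrac1c\, g_\omega(\alpha)\ll N^{\max\{\omega-P^*(c,k),\,0\}}\log N, \]
because $\int_1^N t^{\omega-P^*-1}\,\mathrm{d}t\asymp 1$ when $\omega<P^*$. This loss is real, not an artefact of the method. The error $E$ has a Fourier coefficient at $n=2^k$ that is nonzero and independent of $N$, so $\|E\|_\infty\geq\|E\|_2\gg 1$. Since $h\geq P^*(c,k)^{-1}+1$, the critical exponents $\omega\approx 1/h$ lie exactly in the range $\omega<P^*$.

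Your mean value bound $\int_0^1|g_\omega|^{2s}\,\mathrm{d}\alpha\ll N^{2s\omega-1+\eps}$ also fails when $2s\omega<1$: by Parseval and H\"older the integral is $\gg 1$. Over the full range the correct bound is $N^{\max\{2s\omega-1,0\}+\eps}$.

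With the correct bounds, at $\omega=1/h$ you need an error of $N^{h\omega-1-\tau}=N^{-\tau}$, yet already $\int_0^1|E|^h\,\mathrm{d}\alpha\geq\|E\|_2^h\gg1$. So expanding $(c^{-1}g_\omega+E)^h$ and bounding the cross terms in absolute value over the full range cannot succeed, however you balance $L^\infty$ and $L^2$ norms.

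\textbf{The missing idea is the paper's restriction to a large summand.} Every solution of $x_1+\cdots+x_h=N$ has some $x_i\geq N/h$. Let $T^\sharp$ be the weighted sum over $N/h\le n\le N$. Then $\int_0^1(T-T^\sharp)^h e(-N\alpha)\,\mathrm{d}\alpha=0$, and the count equals $\sum_{j\geq1}(-1)^{j+1}\binom hj\int_0^1 (T^{\sharp})^jT^{h-j}e(-N\alpha)\,\mathrm{d}\alpha$.

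Every term now contains a dyadic factor. For dyadic sums the transfer error genuinely is $O(N^{\omega-P^*})$ (Lemma \ref{PSWGequiv}). The dyadic $\P^k$-sum $U^\sharp$ also satisfies $\|U^\sharp\|_\ell\ll N^{\omega-1/\ell^*+o(1)}$, where $\ell^*=\max\{\ell,2H_0\}$, with no $\max\{\cdot,0\}$. Let $r$ and $s$ be the numbers of error factors taken from $T^\sharp$ and from $T$ respectively. H\"older's inequality then gives each cross term a saving
\[ (j-r)\Big(\frac{1}{(h-s)^*}-\frac1h\Big)+r\Big(P^*-\frac1h\Big)-(h-j)\delta\;\geq\;\frac{1}{h(h-1)}-(h-1)\delta\;>\;0. \]

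\textbf{Second, you verify only one hypothesis of the subbasis criterion.} Theorem \ref{main-tech} also requires the Hua-type bound \eqref{hua-type} for $B=\P^k_{(c)}$ itself, together with $h\geq 2H_0(B)+1$. The concentration arguments cannot be inherited from \cite{tafWWG} without it.

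The paper proves this in Proposition \ref{PSPhua}: $\sum_{n\le x}r_{\P^k_{(c)},h}(n)^2\ll x^{2h/ck-1+o(1)}$ for $h\geq\max\{H_0(k),\tfrac12P^*(c,k)^{-1}\}$. The proof raises \eqref{PSWG} to the power $2h$ and absorbs the error $x^{2h/ck-2hP^*}$. Hence $H_0(\P^k_{(c)})=\max\{H_0(k),\lceil\tfrac12P^*(c,k)^{-1}\rceil\}$. This, not any balancing of cross terms, is where the condition $h\geq 2\lceil\tfrac12P^*(c,k)^{-1}\rceil+1$ comes from. It also implies the condition $h\geq P^*(c,k)^{-1}+1$ used above.

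A minor point: with the weights $\log x_i$, the main term carries $c^{-h}$, not $(ck)^{-h}$. The identity $\log p^k=k\log p$ absorbs the factor $1/k$.
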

 
 \begin{ntt}
  Throughout, $f(x)\ll g(x)$ (or $f(x)=O(g(x))$) means $|f(x)|\leq Cg(x)$ for some constant $C>0$, and $f(x)\asymp g(x)$ means both $f(x)\ll g(x)$ and $g(x)\ll f(x)$. Dependencies of implied constants on parameters are indicated by subscripts when relevant (usually omitted for $c$, $k$, $h$ and $f$). For $1\leq p<\infty$, we write
  \[ \|F\|_p := \bigg(\int_0^1 |F(\alpha)|^p\,\mathrm{d}\alpha\bigg)^{1/p}, \]
  and $\|F\|_\infty := \sup_{\alpha\in[0,1]}|F(\alpha)|$. Also, $e(\alpha) := e^{2\pi i\alpha}$.
 \end{ntt}

%%%%%%%%%%%%%%%%%%%%%%%%%%%%%%%% 
\section{Subbases with prescribed representation function}
 We start by recalling the main technical result of \cite{tafWWG}, which will be our tool for constructing thin subbases of Piatetski–Shapiro sets. Let $B\subseteq \N$ be a subset of the natural numbers, and write $B(x) := |B\cap [1,x]|$. Assume that
 \begin{equation}
  B(x) \asymp x^{\beta}\vartheta(x) \label{oreg}
 \end{equation}
 for some $\beta>0$ and some slowly varying function $\vartheta(x)$. In addition, suppose that $B$ satisfies the following Hua-type estimate:
 \begin{equation}\label{hua-type}
  \sum_{n\leq x} r_{B,H_0}(n)^2 \ll \frac{B(x)^{2H_0}}{x}\, x^{o(1)}
 \end{equation}
 for some $H_0=H_0(B)\geq 1$.
 
 The following is a direct consequence of \cite[Theorem 1.4 and Corollary 1.6]{tafWWG}.

 \begin{thm}[{\cite{tafWWG}}]\label{main-tech}
  Suppose $B\subseteq \N$ satisfies \eqref{oreg} and \eqref{hua-type}, and let $h \geq 2H_0+1$. Suppose that there exists some $\delta>0$ such that for every $1/h - \delta \leq \omega \leq \beta$, we have
  \begin{equation}
   \sum_{\substack{x_1,\ldots, x_h\in B \\ x_1+\cdots+ x_h = n}} \frac{x_1^{\omega}}{x_1^{\beta}\vartheta(x_1)}\cdots \frac{x_h^{\omega}}{x_h^{\beta}\vartheta(x_h)} \sim \mathfrak{S}(n)\, n^{h\omega - 1}, \label{mainest}
  \end{equation}
  where $\mathfrak{S}(n)$ is some function of $n$ (depending on $h,B,\omega$) satisfying 
  \[ \mathfrak{S}(n)\asymp 1 \quad \text{for } n\in \mathscr{S}, \]
  for some subset $\mathscr{S}\subseteq \N$.\footnote{In applications, the subset $\mathscr{S}$ will either be the full set of natural numbers (for $k$-th powers) or a congruence class of the form $a\cdot \N+b$ (for $k$-th powers of primes).} Then, for every regularly varying function $F(x) = x^{h\omega-1} \psi(x)$ with $F(n)\leq (1+o(1))B(n)^h/n$, we have the following:
  \begin{enumerate}[label=\textnormal{(\roman*)}]
   \item If $F(n)/\log n \to \infty$, there exists $A\subseteq B$ such that 
   \[ r_{A,h}(n) \sim \mathfrak{S}(n)F(n) \quad (n\in\mathscr{S}). \]
  
   \item If $F(n)\gg \log n$, there exists $A\subseteq B$ such that 
   \[ r_{A,h}(n) \asymp F(n) \quad (n\in\mathscr{S}). \]
  \end{enumerate}
 \end{thm}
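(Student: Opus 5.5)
The plan is the probabilistic method of Erdős--Tetali, in the Kim--Vu polynomial-concentration form as adapted in \cite{tafWWG}. Write $F(x)=x^{h\omega-1}\psi(x)$ with $\psi$ slowly varying. I would choose $A\subseteq B$ randomly, including each $b\in B$ independently with probability
\[ p_b := \min\Big\{1,\ \frac{b^{\omega}}{b^{\beta}\vartheta(b)}\,\phi(b)\Big\}, \]
where $\phi$ is a slowly varying function with $\phi^h\approx\psi$. The hypothesis $F(n)\le(1+o(1))B(n)^h/n$ forces $\omega\le\beta$, and (after adjusting constants) gives $p_b\le 1$ asymptotically. Then $r_{A,h}(n)$ is a polynomial of degree $h$ in the indicator variables $t_b$. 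Its expectation over tuples with distinct entries is the left side of \eqref{mainest} weighted by $\phi(x_1)\cdots\phi(x_h)$.

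\textbf{Step 1 (the mean).} I would show $\mathbb{E}\,r_{A,h}(n)\sim\mathfrak{S}(n)F(n)$. The factor $\phi(x_1)\cdots\phi(x_h)$ must be replaced by $\phi(n)^h$. Tuples where all $x_i\ge n^{1-\eta}$ are handled by uniform convergence of slowly varying functions on compact ratio ranges. Tuples where some $x_i$ is small are bounded by applying \eqref{mainest} with a slightly perturbed exponent $\omega'$; this is why \eqref{mainest} is assumed for a whole interval of $\omega$. Tuples with repeated entries contribute a lower order of magnitude by the same bounds.

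\textbf{Step 2 (concentration).} Apply Kim--Vu (or Vu's refinement), which requires bounding the expected partial derivatives $E_j(n)$ for $1\le j\le h-1$. Here $E_j(n)$ is the maximum, over fixed $x_1,\dots,x_j$, of the expected number of ways to complete the tuple to a representation of $n$. Here Hua \eqref{hua-type} enters.
\begin{itemize}
\item When $h-j\ge 2H_0$, write the completion count as a weighted $(h-j)$-fold representation count of $m=n-x_1-\cdots-x_j$. Split dyadically, apply Cauchy--Schwarz, and use \eqref{hua-type}. This gives $E_j\ll n^{(h-j)\omega-1+o(1)}$, which is $\le n^{-\epsilon}$ since $\omega<1/h+$ small.
\item When $h-j<2H_0$, use the trivial bound by $B(x)$ combined with the $j$ fixed weights.
\end{itemize}
The margin $\omega\ge 1/h-\delta$ with $\delta$ small is chosen so that in every case $E_j\le n^{-\epsilon}$. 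Kim--Vu then gives
\[ |r_{A,h}(n)-\mathbb{E}r_{A,h}(n)| \le \lambda^{h}\sqrt{\mathbb{E}\,E'} \]
with failure probability $\ll n^{-2}$, taking $\lambda\asymp\log n$. When $F(n)/\log n\to\infty$ this deviation is $o(\mathbb{E})$, and Borel--Cantelli yields (i) for $n\in\mathscr{S}$.

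\textbf{Step 3 (the case $F\gg\log n$).} For (ii), use the Erdős--Tetali disjoint-representations argument. Lower-tail (Janson) bounds on a maximal family of disjoint representations give $r_{A,h}(n)\gg F(n)$. The upper bound comes from showing that the number of representations sharing an element is bounded, using $E_j\le n^{-\epsilon}$ and the sunflower/deletion lemma.

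The main obstacle is Step 2 in the intermediate range $j$ close to $h-2H_0$. There the Hua input is barely sufficient, and $\omega$ slightly below $1/h$ makes the exponents tight. I would first try to optimise the dyadic Cauchy--Schwarz split so as to read off the admissible $\delta$ explicitly.
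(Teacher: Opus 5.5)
The paper gives no argument for this statement; it imports it from \cite[Theorem 1.4 and Corollary 1.6]{tafWWG}. Your outline is the natural Erd\H{o}s--Tetali/Vu strategy, and Step 1 is essentially right. The perturbed exponent $\omega'=\omega-\eps\geq 1/h-\delta$ is exactly what the range of $\omega$ below $1/h$ is for. One small correction: cut at $\lambda n$ with $\lambda\to0$ slowly, not at $n^{1-\eta}$ with $\eta$ fixed, since slowly varying functions converge uniformly only on compact ratio ranges.

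\textbf{The gap is in Step 2.} Your $E_j$ is a maximum over \emph{all} $x_1,\dots,x_j$, including choices for which $m=n-x_1-\cdots-x_j$ is bounded. For such $m$ the completion count $\sum_{y_1+\cdots+y_{h-j}=m}\prod_i p_{y_i}$ is $\asymp 1$, because $p_b\asymp 1$ for bounded $b$. For instance, with $b_0=\min B$, $E_{h-1}\geq p_{b_0}$ for every $n$ with $n-b_0\in(h-1)B$. So the uniform bound $E_j\ll n^{(h-j)\omega-1+o(1)}$ is false. The reason is that \eqref{hua-type} controls representation counts only on average over $m$, and any pointwise bound obtained from $\int_0^1|f|^{\ell}\,\mathrm{d}\alpha$ picks up the $\gg 1$ diagonal contribution of the small elements of $B$.

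Hence $E'\asymp 1$. Kim--Vu with $\lambda\asymp\log n$ then gives only a deviation of size $(\log n)^{h-1/2}\sqrt{F(n)}$, which is $o(F(n))$ only when $F(n)\gg(\log n)^{2h-1}$. Vu's small-expectation inequality likewise needs $E'\leq n^{-\alpha}$. The argument therefore fails precisely at $\omega=1/h$ with $F$ slowly varying and $F(n)/\log n\to\infty$, which is the thin-subbasis case the theorem is about. Step 3 uses the same false bound for its upper estimate.

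Separately, nothing makes $\omega$ close to $1/h$; it ranges up to $\beta$. For $\omega\geq 1/h+c$ the right requirement is $E'\leq E\,n^{-\eps}$, not $E'\leq n^{-\eps}$. That condition does hold for the full polynomial, so this part is only a misstatement.

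\textbf{What is missing} is a separate treatment of representations containing a small summand. Write $r_{A,h}(n)=Y_{\geq}+Y_{<}$, where $Y_{\geq}$ counts representations with all $x_i\geq n^{\gamma}$, for a fixed $\gamma<1$ close to $1$.

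For $Y_{\geq}$ the Hua input does what you want once the diagonal is gone. The truncated sum $f_\gamma(\alpha)=\sum_{b\in B,\ n^{\gamma}\leq b\leq n}p_b\,e(b\alpha)$ satisfies $\|f_\gamma\|_{2H_0}^{2H_0}\ll n^{\gamma(2H_0\omega-1)+o(1)}$, by \eqref{hua-type} and dyadic partial summation. A completion count with $s=h-j$ free variables is at most $\|f_\gamma\|_\infty^{s-2H_0}\|f_\gamma\|_{2H_0}^{2H_0}$ if $s\geq 2H_0$, and at most $\|f_\gamma\|_{2H_0}^{s}$ if $s<2H_0$. At $\omega=1/h$ both are $\leq n^{-\eps}$. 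Your ``trivial bound by $B(x)$'' for $s<2H_0$ does not suffice: with $H_0=2$, $h=5$, $\beta$ slightly above $1/2$, $\omega=1/5$ and $j=2$ it gives $n^{1/10}$. Kim--Vu then applies to $Y_{\geq}$, whose mean is still $\sim\mathfrak{S}(n)F(n)$.

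For $Y_{<}$, your Step~1 computation gives $\mathbb{E}\,Y_{<}\leq n^{-\eps}$. This must be upgraded to $Y_{<}=o(F(n))$ almost surely. Concentration inequalities cannot do this directly, because the partial derivatives of $Y_{<}$ with respect to its large variables are again $\asymp 1$. One needs a combinatorial argument: the Erd\H{o}s--Tetali bound $\mathbb{P}(A\text{ contains }K\text{ pairwise disjoint such representations})\leq(\mathbb{E}\,Y_{<})^K/K!$, together with a sunflower/deletion step bounding how many of them can share elements. That, rather than optimising the Cauchy--Schwarz split, is the real obstacle, and your proposal does not address it.
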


 With this result, Theorems \ref{MTc}, \ref{MTPSSkappa}, and \ref{MTPSPP} follow directly from Propositions \ref{pss-hua}, \ref{PSShua} and \ref{PSPhua} (Hua-type bounds for $\mathbb{N}_{(c)}$, $\mathbb{N}^k_{(c)}$ and $\mathbb{P}^k_{(c)}$) together with the estimates in Theorems \ref{MTcest}, \ref{main-PSS1}, and \ref{main-PSP}. Bounds for the relevant singular series are discussed in \cite[Lemmas 4.5 and 5.6]{tafWWG}.

%%%%%%%%%%%%%%%%%%%%%%%%%% 
\section{Piatetski–Shapiro sequences}\label{sec-pss}
 In this section we prove Theorem \ref{MTcest}, from which Theorem \ref{MTc} will follow. In order to apply Theorem \ref{main-tech}, we first derive a Hua-type mean value estimate for $\N_{(c)}$, which follows from Robert--Sargos \cite[Theorem 2]{robsar06} (for $1<c<2$) and Poulias \cite[Theorem 1.4]{pou21} (for $c>2$). 
 
 \begin{prop}[Hua-type bound for $\N_{(c)}$]\label{pss-hua}
  Let $c>1$ be non-integral, and let $H_0(c)$ be as in \eqref{h0c1}. For every $h\geq H_0(c)$,
  \[ \sum_{n\leq x} r_{\N_{(c)},h}(n)^2 \ll x^{2h/c - 1 + o(1)}. \]
 \end{prop}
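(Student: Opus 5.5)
By Parseval, the left-hand side is an even moment of an exponential sum. Let
\[ f(\alpha):=\sum_{m\leq x^{1/c}+1} e(\alpha\lfloor m^c\rfloor). \]
Then for $n\leq x$ the count $r_{\N_{(c)},h}(n)$ is at most the coefficient of $e(n\alpha)$ in $f(\alpha)^h$, since every $x_i\leq n\leq x$ is of the form $\lfloor m^c\rfloor$ with $m\leq x^{1/c}+1$. (If $\lfloor m^c\rfloor$ repeats, which happens only for $m=0,1$ or not at all since $c>1$, this affects nothing.) Hence
\[ \sum_{n\leq x} r_{\N_{(c)},h}(n)^2 \leq \int_0^1 |f(\alpha)|^{2h}\,\mathrm{d}\alpha . \]
With $X:=x^{1/c}$, the claim reduces to
\[ \int_0^1 |f(\alpha)|^{2h}\,\mathrm{d}\alpha \ll X^{2h-c+o(1)}. \]
Because $|f|\leq X+1$, it suffices to prove this for $h=H_0(c)$; for larger $h$ we bound $|f|^{2h}\leq (X+1)^{2h-2H_0}|f|^{2H_0}$. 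The integral also counts solutions of
\[ \lfloor m_1^c\rfloor+\cdots+\lfloor m_h^c\rfloor=\lfloor m_{h+1}^c\rfloor+\cdots+\lfloor m_{2h}^c\rfloor,\qquad m_i\leq X+1 . \]

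For $1<c<2$, where $H_0=2$, I would invoke Robert--Sargos \cite[Theorem 2]{robsar06}. It bounds the number of quadruples with $|m_1^c+m_2^c-m_3^c-m_4^c|<\delta X^c$ by $X^{2+\eps}+\delta X^{4+\eps}$. Now $\lfloor a\rfloor+\lfloor b\rfloor=\lfloor d\rfloor+\lfloor e\rfloor$ forces $|a+b-d-e|<2$, so we may take $\delta=2X^{-c}$. This gives $X^{2+\eps}+X^{4-c+\eps}\ll X^{4-c+\eps}$ because $c<2$, which is exactly $x^{4/c-1+o(1)}$.

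For $c>2$, where $H_0=\tfrac12(\lfloor 2c\rfloor+1)(\lfloor 2c\rfloor+2)$, I would invoke Poulias \cite[Theorem 1.4]{pou21}. It is a Vinogradov/Hua-type mean value for $\sum e(\alpha\lfloor n^c\rfloor)$, or equivalently a count of near-solutions of $\sum_{i\leq s}(m_i^c-m_{s+i}^c)=O(1)$, giving $X^{2s-c+\eps}$ for $2s\geq$ the stated threshold. I would then check that this threshold is exactly $2H_0(c)$ and apply the same floor-to-near-solution reduction as above, where the error $|\sum\pm m_i^c|<h$ is harmless.

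The main obstacle is bookkeeping rather than new analysis. One must match the precise normalizations in the cited theorems: whether they count floors or near-solutions, the range of $m$, the width of the window, and whether the $X^{\eps}$ loss or the diagonal term $X^{s}$ dominates. In each case one must confirm that these translate into $x^{2h/c-1+o(1)}$ when $h=H_0(c)$. If Poulias states his result only for the real-number inequality problem, I would first cover the window $|\cdot|<h$ by $O(1)$ windows of unit width, and then apply his result.
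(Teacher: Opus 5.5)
Your proposal is correct and follows essentially the same route as the paper. Both use Parseval and the reduction from floors to near-solutions. For $1<c<2$ both apply Robert--Sargos with $\delta\asymp X^{-c}$; this needs a dyadic splitting in $m$, which the paper also only mentions in passing. For $c>2$ both appeal to Poulias.

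The one piece of bookkeeping the paper makes explicit is the form of Poulias's Theorem 1.4. It is a real-line mean value, $\int_{-\kappa}^{\kappa}|\sum_{n\leq P}e(\beta n^c)|^{2h}\,\mathrm{d}\beta\ll\kappa P^{2h-c+\eps}$. The paper converts this into a count of solutions of $|\sum m_i^c-\sum n_i^c|\leq h$ via the Davenport--Heilbronn kernel $K(\beta)=(\sin(2\pi h\beta)/(\pi\beta))^2$, which has $\widehat K(y)=(2h-|y|)_+$, together with a dyadic decomposition in $\beta$. This replaces your ``cover by unit windows'' step. As you stated it, that step would also need the standard fact that shifted-window counts are dominated by the centred one.
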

 \begin{proof}
  Let $P := (x+1)^{1/c}$, so that by Parseval, we have
  \[ \sum_{n\leq x} r_{\N_{(c)},h}(n)^2 \leq \int_0^1 \bigg|\sum_{n\leq P} e(\alpha \lfloor n^c\rfloor)\bigg|^{2h}\,\mathrm{d}\alpha. \]
  
  Assume first that $1<c<2$. By orthogonality, the integral $\int_{0}^{1} |\sum_{n\leq P} e(\alpha \lfloor n^c\rfloor)|^4\,\mathrm{d}\alpha$ counts the number of quadruples $n_1,n_2,n_3,n_4\leq P$ satisfying $\lfloor n_1^c\rfloor + \lfloor n_2^c\rfloor = \lfloor n_3^c\rfloor + \lfloor n_4^c\rfloor$. Every such quadruple satisfies
  \[ |n_1^c + n_2^c - n_3^c - n_4^c|\leq 2. \]
  Taking $\delta = 2P^{-c}$ in Theorem 2 of Robert--Sargos \cite{robsar06}, after a standard dyadic decomposition, the number of such quadruples is
  \[ \ll_\eps P^{2+\eps} + P^{4-c+\eps}. \]
  Since $c<2$, the first term is absorbed by the second. So for any $h\geq 2$, the trivial bound $|\sum_{n\leq P} e(\alpha \lfloor n^c\rfloor)|\leq P$ gives
  \[ \int_0^1 \bigg|\sum_{n\leq P} e(\alpha \lfloor n^c\rfloor)\bigg|^{2h}\,\mathrm{d}\alpha \leq P^{2h-4}\int_0^1 \bigg|\sum_{n\leq P} e(\alpha \lfloor n^c\rfloor)\bigg|^4\,\mathrm{d}\alpha \ll_\eps P^{2h-c+\eps}, \]
  as required.

  Now assume that $c>2$, and let $h\geq \frac{1}{2}(\lfloor 2c\rfloor + 1)(\lfloor 2c\rfloor + 2)$. Again by orthogonality, the integral $\int_0^1 |\sum_{n\leq P} e(\alpha \lfloor n^c\rfloor)|^{2h}\,\mathrm{d}\alpha$ counts the number of $m_1,\ldots,m_h,n_1,\ldots,n_h \in [1,P]\cap \Z$ such that $\sum_{i=1}^h \lfloor m_i^c\rfloor=\sum_{i=1}^h \lfloor n_i^c\rfloor$. Every such tuple satisfies
  \[ \bigg|\sum_{i=1}^h m_i^c-\sum_{i=1}^h n_i^c\bigg|\leq h. \]
  It is therefore enough to bound the number $J$ of such tuples for which the last inequality holds. Let $f(\beta) := \sum_{n\leq P} e(\beta n^c)$.
  We use the Davenport--Heilbronn kernel
  \[ K(\beta) = \bigg(\frac{\sin(2\pi h\beta)}{\pi\beta}\bigg)^2, \]
  which satisfies
  \[ \widehat{K}(y) := \int_{-\infty}^{\infty} K(\beta)\,e(-\beta y)\,\mathrm{d}\beta = (2h-|y|)_+. \]
  In particular, $\widehat K(y)\geq h$ whenever $|y|\leq h$, and hence $\mathbbm{1}_{[-h,h]}(y)\leq h^{-1}\widehat K(y)$. It follows that
  \begin{align*}
   J &\leq h^{-1}\sum_{\substack{1\leq m_1,\ldots,m_h \leq P \\ 1\leq n_1,\ldots,n_h \leq P}}\widehat K\bigg(\sum_{i=1}^h m_i^c-\sum_{i=1}^h n_i^c\bigg) \\
   &= h^{-1}\int_{-\infty}^{\infty} K(\beta) \sum_{\substack{1\leq m_1,\ldots,m_h \leq P \\ 1\leq n_1,\ldots,n_h \leq P}} e\bigg(-\beta\bigg(\sum_{i=1}^h m_i^c-\sum_{i=1}^h n_i^c \bigg)\bigg)\,\mathrm{d}\beta  \\
   &= h^{-1}\int_{-\infty}^{\infty} |f(\beta)|^{2h}K(\beta)\,\mathrm{d}\beta.
  \end{align*}
  By Theorem 1.4 of Poulias \cite{pou21}, since $h\geq \frac{1}{2}(\lfloor 2c\rfloor+1)(\lfloor 2c\rfloor+2)$, we have, for every $\kappa\geq 1$ and every $\eps>0$,
  \[ \int_{-\kappa}^{\kappa}|f(\beta)|^{2h}\,\mathrm{d}\beta \ll_{c,h,\eps}\kappa P^{2h-c+\eps}. \]
  Therefore, decomposing the integral against $K$ into the intervals $|\beta|\leq 1$ and $2^j<|\beta|\leq 2^{j+1}$, $j\geq 0$, and using that $K(\beta)\ll_h \min\{1,|\beta|^{-2}\}$, we obtain
  \begin{align*}
   \int_{-\infty}^{\infty}|f(\beta)|^{2h}K(\beta)\,\mathrm{d}\beta
   &\ll_h \int_{-1}^{1}|f(\beta)|^{2h}\,\mathrm{d}\beta + \sum_{j\geq 0} 2^{-2j}\int_{|\beta|\leq 2^{j+1}}|f(\beta)|^{2h}\,\mathrm{d}\beta  \\
   &\ll_{c,h,\eps} P^{2h-c+\eps} + \sum_{j\geq 0}2^{-2j}\, 2^{j+1}P^{2h-c+\eps}  \\
   &\ll_{c,h,\eps} P^{2h-c+\eps}.
  \end{align*}
  Hence
  \[ \int_0^1 \bigg|\sum_{n\leq P} e(\alpha \lfloor n^c\rfloor)\bigg|^{2h}\,\mathrm{d}\alpha \leq J \ll_{c,h,\eps} P^{2h-c+\eps}, \]
  giving the desired bound.
 \end{proof}
 
 Next, we derive asymptotics for the weighted solution counts needed to apply Theorem \ref{main-tech}.
 
\subsection{Setup for Theorem \ref{MTcest}}
 We now prove Theorem \ref{MTcest}. Fix a non-integral $c>1$, let $h\geq 2H_0(c)+1$, let $\omega\geq 1/h-\delta$, and let $N$ be a large integer. Put
 \[ T(\alpha;x) = \sum_{n\leq x} n^{\omega}\, \frac{\mathbbm{1}_{\N_{(c)}}(n)}{n^{1/c}}\, e(n\alpha),\qquad T^{\sharp}(\alpha;x) = \sum_{x/h\leq n\leq x} n^{\omega}\, \frac{\mathbbm{1}_{\N_{(c)}}(n)}{n^{1/c}}\, e(n\alpha). \]
 Thus, by orthogonality,
 \[ \sum_{\substack{x_1,\ldots,x_h\in \N_{(c)}\\ x_1+\cdots+x_h=N}} (x_1\cdots x_h)^{\omega-\frac{1}{c}} =
 \int_0^1 T(\alpha;N)^h e(-N\alpha)\,\mathrm{d}\alpha. \]
 If $x_1+\cdots+x_h=N$, then $x_i\geq N/h$ for at least one $i$. Hence
 \[ \int_0^1 (T(\alpha;N)-T^{\sharp}(\alpha;N))^h e(-N\alpha)\,\mathrm{d}\alpha=0. \]
 Therefore
 \begin{align}
  \sum_{\substack{x_1,\ldots,x_h\in \N_{(c)}\\ x_1+\cdots+x_h=N}} (x_1\cdots x_h)^{\omega-\frac{1}{c}} &=
  \int_0^1\Big(T(\alpha;N)^h-(T(\alpha;N)-T^{\sharp}(\alpha;N))^h\Big)e(-N\alpha)\,\mathrm{d}\alpha \nonumber \\
  &= \sum_{j=1}^h(-1)^{j+1}\binom{h}{j} \int_0^1 T^{\sharp}(\alpha;N)^jT(\alpha;N)^{h-j}e(-N\alpha)\,\mathrm{d}\alpha. \label{pss-expand}
 \end{align}

 Let $\omega_0 := 1/h-\delta$. We have $\omega_0>0$ by the hypothesis on $\delta$. Choose $\nu>0$ sufficiently small so that
 \[ \nu < \frac{1}{3} \min\bigg\{\frac{1}{c},\,\omega_0\bigg\}. \]
 We define the major arc by
 \[ \mathfrak{M}:=\left\{\alpha\in[0,1] ~\bigg|~ \|\alpha\|\leq \frac{L}{N}\right\},\qquad L:=N^\nu, \]
 where $\|\cdot\|$ denotes distance to the nearest integer, and put $\mathfrak{m}:=[0,1]\setminus\mathfrak{M}$. We shall prove a major arc asymptotic (Proposition \ref{major-pss}) and a minor arc estimate with power saving (Proposition \ref{minor-pss}), from which Theorem \ref{MTcest} will follow at once from \eqref{pss-expand}.

\subsection{Major arcs}
 In this subsection, we will prove the following:
 
 \begin{prop}[Major arcs]\label{major-pss}
  There exists $\eta>0$ such that
  \begin{align*}
   \sum_{j=1}^h(-1)^{j+1}\binom{h}{j} \int_{\mathfrak{M}} T^{\sharp}(\alpha;N)^j T(\alpha;N)^{h-j} e(-N\alpha)\,\mathrm{d}\alpha = \frac{1}{c^h}\frac{\Gamma(\omega)^h}{\Gamma(h\omega)} N^{h\omega-1} + O(N^{h\omega-1-\eta}).
  \end{align*}
 \end{prop}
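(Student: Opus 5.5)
On the major arc $\|\alpha\|\le L/N$ with $L=N^\nu$, I will replace the sparse weighted sum by a smooth integral. The first step is a pointwise approximation of $T(\alpha;x)$ for $|\alpha|\le L/N$. Write $\mathbbm{1}_{\N_{(c)}}(n)=\lfloor -n^{1/c}\rfloor-\lfloor-(n+1)^{1/c}\rfloor$, which holds for $n$ large since consecutive elements of $\N_{(c)}$ are distinct. This gives
\[
\mathbbm{1}_{\N_{(c)}}(n)=\big((n+1)^{1/c}-n^{1/c}\big)+\psi(-(n+1)^{1/c})-\psi(-n^{1/c}),
\]
where $\psi(t)=t-\lfloor t\rfloor-\tfrac12$. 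The main term equals $\tfrac1c n^{1/c-1}(1+O(1/n))$, so it contributes
\[
\tfrac1c\sum_{n\le x}n^{\omega-1}e(n\alpha)=\tfrac1c\int_1^x t^{\omega-1}e(t\alpha)\,\mathrm{d}t+O(1+|\alpha| x^{\omega}).
\]
For the $\psi$-part I will use partial summation after rearranging into $\sum_n \psi(-n^{1/c})\big(g(n-1)-g(n)\big)$ with $g(n)=n^{\omega-1/c}e(n\alpha)$. Since $|g(n)-g(n-1)|\ll n^{\omega-1/c-1}+|\alpha| n^{\omega-1/c}$, it suffices to bound $\sum_{n\le y}\psi(-n^{1/c})\ll y^{1-\epsilon_0}$. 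I will get this from Vaaler's approximation and van der Corput (exponent pairs) for $\sum e(mn^{1/c})$, using that $1/c$ is non-integral and lies in $(0,1)$. The outcome should be
\[
T(\alpha;x)=\tfrac1c I(\alpha;x)+O\big(x^{\omega-\kappa}(1+|\alpha|x)\big)
\]
for some $\kappa>0$, where $I(\alpha;x)=\int_0^x t^{\omega-1}e(t\alpha)\,\mathrm{d}t$. The same holds for $T^\sharp$ with $I^\sharp$ over $[x/h,x]$. For $|\alpha|\le L/N$ the error is $\ll N^{\omega-\kappa+\nu}$, so I choose $\nu<\kappa/2$ in addition to the stated constraint.

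Next I substitute into each term of \eqref{pss-expand} restricted to $\mathfrak{M}$. I expand $T^{\sharp j}T^{h-j}-c^{-h}I^{\sharp j}I^{h-j}$ as a telescoping sum of products in which one factor is an error term and the others are bounded by $|T|,|I|\ll \min(N^\omega,|\alpha|^{-\omega})$. The crucial point is that $\omega$ may be as small as $\omega_0<1/h$, so $|I|^{h-1}$ need not be integrable at infinity; this is exactly why the $T^\sharp$ decomposition is there. At least one factor is $\sharp$-type, and $I^\sharp(\alpha;N)\ll N^{\omega-1}|\alpha|^{-1}$ by integrating over $[N/h,N]$. For $T^\sharp$ I will use the same bound after partial summation, valid up to the error term. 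The error integral is then at most
\[
\int_{|\alpha|\le L/N} N^{\omega-\kappa+\nu}\cdot \min(N^\omega,N^{\omega-1}|\alpha|^{-1})\cdot \min(N^{\omega},|\alpha|^{-\omega})^{h-2}\,\mathrm{d}\alpha\ll N^{h\omega-1-\kappa+\nu}\log N.
\]
Alternatively, the crude bound (measure $L/N$ times $N^{h\omega-\kappa+\nu}$) gives $N^{h\omega-1-\kappa+2\nu}$, which already suffices with $\nu<\kappa/3$.

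Finally, I complete the smooth integral to all of $\R$. Reversing the inclusion–exclusion,
\[
\sum_j(-1)^{j+1}\binom hj\int_{\R}I^{\sharp j}I^{h-j}e(-N\alpha)\,\mathrm{d}\alpha=\int_{\substack{t_i\ge0\\ \sum t_i=N}}\prod_i t_i^{\omega-1}=\frac{\Gamma(\omega)^h}{\Gamma(h\omega)}N^{h\omega-1}.
\]
To justify this, I truncate the kernel or note that each $j\ge1$ term is absolutely convergent because of the $|\alpha|^{-1}$ decay of $I^\sharp$. The tail $|\alpha|>L/N$ contributes $\ll \int_{L/N}^\infty N^{\omega-1}\alpha^{-1-(h-1)\omega}\,\mathrm{d}\alpha\ll N^{h\omega-1}L^{-(h-1)\omega}$, which is a power saving. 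Multiplying by $c^{-h}$ gives the proposition, with $\eta=\min(\kappa-2\nu,(h-1)\omega_0\nu)$ up to $\epsilon$ losses.

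The main obstacle is the first step: obtaining a genuine power saving $\kappa$ for $\sum\psi(-n^{1/c})$ uniformly in $\omega\ge\omega_0$ small, so that it beats the $|\alpha|x\le L$ loss. I will rely on van der Corput's bound $\sum_{n\sim y}\psi(n^{1/c})\ll y^{1/2+1/(2c)}$-type estimates, which are standard for non-integral $c$.
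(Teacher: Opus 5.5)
Your proposal is correct, but it differs from the paper's proof in the first and last steps.

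\textbf{Pointwise approximation on $\mathfrak{M}$.} The paper compares $T$ with the discrete sum $U(\alpha;N)=\sum_{n\le N}n^{\omega-1}e(n\alpha)$ and uses no exponential-sum input at all. From $\lfloor m^c\rfloor=m^c+O(1)$ it gets
\[
\sum_{n\le t}\big(n^{\omega-1/c}\mathbbm{1}_{\N_{(c)}}(n)-\tfrac1c n^{\omega-1}\big)\ll 1+t^{\omega-1/c}.
\]
A single partial summation with $|\alpha|\le N^{-1+\nu}$ then gives an error $\ll N^{\nu}(1+N^{\omega-1/c})\ll N^{\omega-2\nu}$, since $3\nu<\min\{1/c,\omega_0\}$.

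The same thing is visible in your own setup. Using only $|\psi|\le\frac12$, the $\psi$-part is
\[
\ll 1+x^{\omega-1/c}+\sum_{n\le x}|g(n)-g(n-1)|\ll(1+x^{\omega-1/c})(\log x+|\alpha|x).
\]
On $\mathfrak{M}$ this already saves $N^{\min\{\omega,1/c\}-\nu}$ over $N^{\omega}$, because the counting function of $\N_{(c)}$ is $t^{1/c}+O(1)$. So the step you flag as the main obstacle is not one. Your van der Corput bound for $\sum\psi(-n^{1/c})$ is valid but superfluous. If you do exploit it through partial summation, you lose $(1+|\alpha|x)^2$ rather than $(1+|\alpha|x)$, since the variation of $g(n-1)-g(n)$ involves $|\alpha|^2$; this is harmless.

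\textbf{Replacement step.} Your ``crude bound'' is exactly the paper's argument.

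\textbf{Completion.} The paper extends $\int_{\mathfrak M}$ to $[0,1]$ using $|U|\ll N^{\omega-1}\|\alpha\|^{-1}+\|\alpha\|^{-\omega}$ and $|U^\sharp|\ll N^{\omega-1}\|\alpha\|^{-1}$. It then undoes the inclusion--exclusion to reach the lattice sum $\sum_{n_1+\cdots+n_h=N}(n_1\cdots n_h)^{\omega-1}$, and quotes a standard asymptotic for it with a power-saving error.

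Your continuous version instead produces the main term exactly from the Dirichlet integral. The price is twofold. You must justify Fourier inversion on $\R$; your $j\ge1$ decay argument, together with continuity of the $h$-fold convolution at $N$, does this. You must also work on $[-L/N,L/N]$, since $I$ is not $1$-periodic.

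\textbf{One small correction.} The bound $|I(\alpha;N)|\ll|\alpha|^{-\omega}$ fails for $\omega>1$ and $|\alpha|>1/N$. Use $N^{\omega-1}|\alpha|^{-1}+|\alpha|^{-\omega}$ instead, as the paper does; the tail saving then becomes $L^{-(h-1)\min\{1,\omega\}}$.
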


 In order to calculate the integral, we first approximate $T$ and $T^{\sharp}$ on $\mathfrak{M}$.

 \begin{lem}\label{pss-major-approx}
  Let
  \[ U(\alpha;x) := \sum_{n\leq x} n^{\omega-1} e(n\alpha),\qquad U^{\sharp}(\alpha;x) := \sum_{x/h\leq n\leq x} n^{\omega-1} e(n\alpha). \]
  Then, uniformly for $\alpha\in\mathfrak{M}$,
  \[ T(\alpha;N) = \frac{1}{c} U(\alpha;N) + O(N^{\omega-2\nu}),\qquad T^{\sharp}(\alpha;N) = \frac{1}{c} U^{\sharp}(\alpha;N) + O(N^{\omega-2\nu}). \]
 \end{lem}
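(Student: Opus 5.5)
The plan is to compare the indicator of $\N_{(c)}$ with its expected density and then use partial summation, exploiting that $\alpha\in\mathfrak{M}$ makes $e(n\alpha)$ vary slowly on scale $N$. Write $n\in\N_{(c)}$ if and only if $\lfloor -n^{1/c}\rfloor-\lfloor -(n+1)^{1/c}\rfloor=1$ (valid for $n$ large, since $c>1$ gives gaps greater than $1$). Writing $\psi(t)=t-\lfloor t\rfloor-\tfrac12$, this gives
\[ \mathbbm{1}_{\N_{(c)}}(n)=(n+1)^{1/c}-n^{1/c}+\psi(-(n+1)^{1/c})-\psi(-n^{1/c}). \]
Since $(n+1)^{1/c}-n^{1/c}=\tfrac1c n^{1/c-1}+O(n^{1/c-2})$, the main part of $T(\alpha;N)$ becomes $\tfrac1c\sum_{n\le N} n^{\omega-1}e(n\alpha)$. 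The error from the second-order term is $\ll\sum n^{\omega-2}\ll 1$, which is acceptable since $\omega-2\nu>\omega/3>0$.

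The remaining work is the $\psi$-contribution. Put $g(n)=n^{\omega-1/c}e(n\alpha)$ and $\Psi(n)=\psi(-n^{1/c})$. Then $\sum_{n\le x}g(n)(\Psi(n+1)-\Psi(n))$ is, by summation by parts, a boundary term $O(x^{\omega-1/c})$ plus $\sum_n \Psi(n)(g(n-1)-g(n))$. For $\alpha\in\mathfrak{M}$ and $n\le N$,
\[ g(n-1)-g(n)\ll n^{\omega-1/c-1}+n^{\omega-1/c}\|\alpha\|\ll n^{\omega-1/c}\big(n^{-1}+L/N\big). \]
Trivially bounding $|\Psi|\le 1/2$ gives $O(N^{\omega-1/c}L)=O(N^{\omega-1/c+\nu})$. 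Since $\nu<1/(3c)$, we have $-1/c+\nu<-2\nu$, so this is $O(N^{\omega-2\nu})$. The boundary term is also fine, since $1/c>2\nu$.

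One caveat: if $\omega<1/c$, the weight $n^{\omega-1/c}$ is decreasing, but the same bound holds. Either $\omega-1/c\ge0$, in which case $\sum_{n\le N}n^{\omega-1/c}L/N\ll N^{\omega-1/c}L$; or $\omega-1/c<0$, in which case $\sum_{n\le N} n^{\omega-1/c}L/N\ll N^{\omega-1/c}L$ still, because $\omega-1/c>-1$. Likewise $\sum n^{\omega-1/c-1}\ll 1+N^{\omega-1/c}$, which is $O(N^{\omega-2\nu})$ because $\omega>3\nu$.

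For $T^\sharp$ the same argument applies over $[N/h,N]$, with boundary terms of size $O(N^{\omega-1/c})$ at both ends. The main obstacle is only the bookkeeping: confirming that the trivial $\psi$ bound suffices thanks to the choice of $\nu$, so that no van der Corput estimate is needed on the major arc. The small-$n$ exceptions, where consecutive values of $\lfloor n^{1/c}\rfloor$ might fail the characterization, contribute $O(1)$.
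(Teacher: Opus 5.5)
Your argument is correct and is essentially the paper's. Both proofs rest on the fact that the counting function of $\N_{(c)}$ is $t^{1/c}+O(1)$, so no cancellation in the discrepancy is needed: you express this pointwise via the $\psi$-identity, while the paper bounds the partial sums $A(t)\ll 1+t^{\omega-1/c}$ of $n^{\omega-1/c}\mathbbm{1}_{\N_{(c)}}(n)-\frac1c n^{\omega-1}$ by parametrizing $n=\lfloor m^c\rfloor$. Both then use partial summation with $\|\alpha\|\le N^{\nu-1}$ and absorb the resulting $N^{\nu}$ loss using $3\nu<\min\{1/c,\omega_0\}$.

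The only slips are cosmetic, and each remaining error is still $O(N^{\omega-2\nu})$ because $2\nu<\min\{1/c,\omega\}$:
\begin{itemize}
\item $\sum_{n\le N}n^{\omega-2}\ll 1$ requires $\omega<1$; in general it is $\ll 1+N^{\omega-1}\log N$.
\item $\sum_{n\le N}n^{\omega-1/c-1}$ picks up a factor $\log N$ when $\omega=1/c$.
\item Your summation by parts has a lower boundary term of size $O(1)$, not only $O(N^{\omega-1/c})$.
\end{itemize}
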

 \begin{proof}
  We prove the estimate for $T$; the proof for $T^{\sharp}$ follows by subtraction. Let
  \[ a(n) := n^{\omega-\frac{1}{c}}\mathbbm{1}_{\N_{(c)}}(n) - \frac{1}{c} n^{\omega-1}, \qquad A(t) := \sum_{n\leq t} a(n). \]
  We claim that
  \[ A(t) \ll 1 + t^{\omega-\frac{1}{c}}. \]
  Indeed, since $\lfloor m^c\rfloor = m^c + O(1)$,
  \begin{align*}
   \sum_{n\leq t}n^{\omega-\frac{1}{c}}\mathbbm{1}_{\N_{(c)}}(n) &= \sum_{m\leq t^{1/c}} \lfloor m^c\rfloor^{\omega-\frac{1}{c}} + O(t^{\omega-\frac{1}{c}}) \\
   &= \sum_{m\leq t^{1/c}} m^{c\omega-1} + O(1 + t^{\omega-\frac{1}{c}}) \\
   &= \frac{t^\omega}{c\omega} + O(1 + t^{\omega-\frac{1}{c}}).
  \end{align*}
  On the other hand, $\frac{1}{c} \sum_{n\leq t} n^{\omega-1} = t^\omega/c\omega + O(1 + t^{\omega-1})$, which proves the claim.

  Hence, by partial summation, since $|\alpha|\leq L/N=N^{-1+\nu}$ on $\mathfrak{M}$,\footnote{We may assume $-\frac{1}{2}<\alpha\leq \frac{1}{2}$.}
  \begin{align*}
   T(\alpha;N)-\frac1cU(\alpha;N) &= \sum_{n\leq N}a(n)e(n\alpha) \\
   &= A(N)e(N\alpha)-2\pi i\alpha\int_1^N A(t)e(t\alpha)\,\mathrm{d}t \\
   &\ll 1 + N^{\omega-\frac{1}{c}} + N^{\nu}(1 + N^{\omega-\frac{1}{c}}).
  \end{align*}
  If $\omega\geq 1/c$, this is $O(N^{\omega-\frac{1}{c}+\nu})$, while if $\omega < 1/c$, it is $O(N^{\nu})$. Since $\omega \geq \omega_0$ and $3\nu < \min\{1/c,\omega_0\}$, this is $O(N^{\omega-2\nu})$ uniformly in the allowed range of $\omega$.
 \end{proof}

 We are now able to substitute $T,T^{\sharp}$ by $U,U^{\sharp}$ in the integral of Proposition \ref{major-pss}.

 \begin{proof}[Proof of Proposition \ref{major-pss}]
  By Lemma \ref{pss-major-approx} and the trivial bounds 
  \[ |T(\alpha;N)|,|T^{\sharp}(\alpha;N)|,|U(\alpha;N)|,|U^\sharp(\alpha;N)|\ll N^\omega, \]
  we have, uniformly for $\alpha\in\mathfrak M$,
  \[ T^{\sharp}(\alpha;N)^j T(\alpha;N)^{h-j} = \frac{1}{c^h} U^{\sharp}(\alpha;N)^j U(\alpha;N)^{h-j} + O(N^{h\omega-2\nu}). \]
  Since $\int_{\mathfrak M} \mathrm{d}\alpha = 2N^{-1+\nu}$, this gives
  \begin{align}
   &\sum_{j=1}^h(-1)^{j+1}\binom{h}{j}
   \int_{\mathfrak{M}}T^{\sharp}(\alpha;N)^jT(\alpha;N)^{h-j}e(-N\alpha)\,\mathrm{d}\alpha \nonumber\\
   &\quad = \frac{1}{c^h} \sum_{j=1}^h(-1)^{j+1}\binom{h}{j} \int_{\mathfrak{M}} U^{\sharp}(\alpha;N)^j U(\alpha;N)^{h-j} e(-N\alpha)\,\mathrm{d}\alpha
   +O(N^{h\omega-1-\nu}). \label{pss-major-replace}
  \end{align}

  We now extend the integral from $\mathfrak{M}$ to $[0,1]$. For $0<\|\alpha\|\leq 1/2$, the estimates of \cite[Lemma 4.6]{tafWWG} give
  \[ |U(\alpha;N)|\ll N^{\omega-1}\|\alpha\|^{-1}+\|\alpha\|^{-\omega}, \qquad |U^\sharp(\alpha;N)|\ll N^{\omega-1}\|\alpha\|^{-1}. \]
  Therefore, exactly as in \cite[Eq. (4.9)]{tafWWG}, for $1\leq j\leq h$,
  \[ \int_{[0,1]\setminus\mathfrak M}|U^\sharp(\alpha;N)|^j|U(\alpha;N)|^{h-j}\,\mathrm{d}\alpha \ll \frac{N^{h\omega-1}}{L^{(h-1)\min\{1,\omega\}}} \ll N^{h\omega-1-\eta_1} \]
  for some $\eta_1 > 0$. Hence the right-hand side of \eqref{pss-major-replace} equals
  \[ \frac{1}{c^h} \sum_{j=1}^h(-1)^{j+1}\binom{h}{j} \int_0^1 U^{\sharp}(\alpha;N)^j U(\alpha;N)^{h-j} e(-N\alpha)\,\mathrm{d}\alpha + O(N^{h\omega-1-\eta_2}) \]
  with some $\eta_2>0$.

  Finally, by the same argument used to obtain \eqref{pss-expand}, applied to $U$ and $U^\sharp$, we have
  \begin{align*}
   \frac{1}{c^h} \sum_{j=1}^h(-1)^{j+1}\binom{h}{j} \int_0^1U^\sharp(\alpha;N)^j U(\alpha;N)^{h-j} e(-N\alpha)\,\mathrm{d}\alpha &= \frac{1}{c^h}\int_0^1 U(\alpha;N)^h e(-N\alpha)\,\mathrm{d}\alpha \\
   &= \frac{1}{c^h}\sum_{\substack{n_1,\ldots,n_h\in\N\\ n_1+\cdots+n_h=N}} (n_1\cdots n_h)^{\omega-1}.
  \end{align*}
  By the standard singular integral estimate (cf. \cite[Lemma 3.5]{taf25}),
  \[ \frac{1}{c^h}\sum_{\substack{n_1,\ldots,n_h\in\N\\ n_1+\cdots+n_h=N}} (n_1\cdots n_h)^{\omega-1} = \frac{1}{c^h}\frac{\Gamma(\omega)^h}{\Gamma(h\omega)}N^{h\omega-1} + O(N^{h\omega-1-\eta_3}) \]
  for some $\eta_3>0$. This proves the proposition.
 \end{proof}

%%%%%%%%%%%%%%%%%%%%%%%% 
\subsection{Minor arcs}
 We now turn to the minor arcs $\mathfrak{m}$. Together with \eqref{pss-expand} and Proposition \ref{major-pss}, the next result directly implies Theorem \ref{MTcest}.

 \begin{prop}[Minor arcs]\label{minor-pss}
  There exists $\eta'>0$ such that
  \[ \sum_{j=1}^h \bigg| \int_{\mathfrak{m}} T^{\sharp}(\alpha;N)^j T(\alpha;N)^{h-j} e(-N\alpha)\,\mathrm{d}\alpha \bigg| \ll N^{h\omega-1-\eta'}. \]
 \end{prop}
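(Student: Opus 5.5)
Following the standard Hölder argument from \cite{tafWWG}, we bound each term by
\[ \int_{\mathfrak m}|T^\sharp|^j|T|^{h-j}\,\mathrm{d}\alpha \leq \sup_{\alpha\in\mathfrak m}|T^\sharp(\alpha;N)|\cdot\bigg(\int_0^1|T^\sharp|^{h-1}\bigg)^{\frac{j-1}{h-1}}\bigg(\int_0^1|T|^{h-1}\bigg)^{\frac{h-j}{h-1}}. \]
This needs two ingredients: a mean value bound for $T,T^\sharp$ in $L^{h-1}$ and a power-saving sup bound for $T^\sharp$ on $\mathfrak m$. Since $h-1\geq 2H_0(c)$, we may interpolate between $L^{2H_0(c)}$ and $L^\infty$, so it suffices to control $\|T\|_{2H_0}$ and $\|T^\sharp\|_{2H_0}$.

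\textbf{Mean value step.} Split $[1,N]$ dyadically into $(X,2X]$. On such a block the weight $n^{\omega-1/c}$ is monotone, so partial summation against the unweighted sum $\sum_{X<n\leq t}\mathbbm 1_{\N_{(c)}}(n)e(n\alpha)$, together with Minkowski, lets us apply Proposition \ref{pss-hua}. Writing $H_0=H_0(c)$, this gives
\[ \int_0^1\Big|\sum_{X<n\leq 2X}n^{\omega-1/c}\mathbbm 1_{\N_{(c)}}(n)e(n\alpha)\Big|^{2H_0}\,\mathrm d\alpha\ll X^{2H_0\omega-1+o(1)}. \]
Handling the sup over $t$ in the partial summation requires a little care; we control it by writing the sum as an integral over $t$ and using Minkowski's integral inequality. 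Summing the dyadic blocks gives $\|T\|_{2H_0}^{2H_0}\ll N^{2H_0\omega-1+o(1)}$ when $\omega>1/(2H_0)$, and $N^{o(1)}$ otherwise. With the trivial bound $N^\omega$ (or $N^{\omega_0}$) this yields
\[ \int|T|^{h-1}\ll N^{(h-1)\omega-1+o(1)} \]
in the main range. When $\omega$ is near $1/h$ the small-$\omega$ regime needs separate bookkeeping; this is presumably where the constraint $\delta<\frac{h-2H_0}{2h(h-1)H_0}$ enters.

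\textbf{Sup bound on minor arcs.} This is the main obstacle. We invoke Madritsch's estimate (Lemma \ref{pss-minor-sup}, quoted as available). For $P=X^{1/c}$ and $L/X\leq\|\alpha\|\leq 1/2$ it should give
\[ \sum_{m\leq P}e(\alpha\lfloor m^c\rfloor)\ll P^{1-\sigma} \]
for some $\sigma=\sigma(c,\nu)>0$. Partial summation over the blocks $N/h\leq n\leq N$ then gives $\sup_{\mathfrak m}|T^\sharp|\ll N^{\omega-\sigma'}$. The delicate point is checking that the minor arc condition $\|\alpha\|>N^{\nu-1}$ falls in the range where Madritsch's bound gives a saving for every $\alpha$ up to $1/2$, including $\alpha$ near rationals with small denominator. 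Because $c$ is non-integral, $\N_{(c)}$ has no local obstruction, so the only major arc is at $0$, and this should be fine. If the saving turns out to be weak, we would shrink $\nu$.

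\textbf{Combining.} Inserting these bounds gives $\ll N^{\omega-\sigma'}\cdot N^{(h-1)\omega-1+o(1)}=N^{h\omega-1-\sigma'+o(1)}$ in the main range. In the small-$\omega$ range the total loss is $N^{O(\delta h(h-1)H_0)}$ against a gain coming from $h-1>2H_0$. The hypothesis on $\delta$ is designed to make this balance favourable, possibly after interpolating using the extra room $h-1-2H_0\geq 0$ in the trivial bound. Choosing $\eta'$ smaller than all the savings finishes the proof.
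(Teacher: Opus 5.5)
There is a genuine gap, and it sits exactly at the exponents the proposition is needed for. Your H\"older split puts one factor of $T^{\sharp}$ in $L^\infty$ and the other $h-1$ factors in $L^{h-1}$, so you need $\int_0^1|T(\alpha;N)|^{h-1}\,\mathrm{d}\alpha\ll N^{(h-1)\omega-1+o(1)}$. That bound is false whenever $(h-1)\omega<1$. The integral is at least $\|T\|_2^{h-1}\geq 1$, since the term $n=1$ alone gives $\|T\|_2^2\geq 1$, whereas $N^{(h-1)\omega-1}\to 0$.

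The correct bound is $N^{\max\{(h-1)\omega-1,0\}+o(1)}$. With it, your estimate exceeds the target $N^{h\omega-1}$ by a factor $N^{(h-j)(\frac{1}{h-1}-\omega)-\lambda}$, which is $N^{1/h-\lambda}$ at $\omega=1/h$, $j=1$. This loss is already present for $\delta=0$, so it is not of size $N^{O(\delta h(h-1)H_0)}$ as you suggest. It also cannot be absorbed by the minor arc saving, which is tiny: $\lambda<\xi/(2c\lceil c\rceil(\lceil c\rceil+1))$, e.g.\ below $1/60$ for $1<c<2$, while $h$ may be $5$. Interpolating with the trivial bound does not help either. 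Any scheme that keeps $T$ in $L^p$ with $p<1/\omega$ pays the same price, because such moments are $\gg 1$ rather than of size $N^{p\omega-1}$.

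A smaller point: interpolating $T$ itself between $L^{2H_0}$ and $\|T\|_\infty\asymp N^{\omega}$ only gives the scaling bound when $\omega>1/(2H_0)$. The interpolation has to be done on the unweighted sum $\sum_{n\leq t}\mathbbm{1}_{\N_{(c)}}(n)e(n\alpha)$ before partial summation, as in Lemma \ref{pss-weighted-mv}.

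The missing idea is to choose the H\"older exponents according to $\omega$, so that $T$ only ever appears in $L^p$ with $p\omega\geq 1$.

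For $\omega\geq 1/h$, bound each term by $\bigl(\int_{\mathfrak m}|T^\sharp|^h\bigr)^{j/h}\bigl(\int_0^1|T|^h\bigr)^{1-j/h}$. Here $\int_{\mathfrak m}|T^\sharp|^h\leq(\sup_{\mathfrak m}|T^\sharp|)^{h-2H_0}\int_0^1|T^\sharp|^{2H_0}\ll N^{h\omega-1-\lambda+o(1)}$, and $\int_0^1|T|^h\ll N^{h\omega-1+o(1)}$ precisely because $h\omega\geq 1$. This gives a saving of $\lambda/h$.

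For $1/h-\delta\leq\omega<1/h$, put $\theta=1-(h-1)\omega$ and split $|T^\sharp|^j|T|^{h-j}=|T^\sharp|^{1-2H_0\theta}\cdot|T^\sharp|^{2H_0\theta}|T^\sharp|^{j-1}|T|^{h-j}$. Apply H\"older with weights $\theta$, $(j-1)\omega$, $(h-j)\omega$, which sum to $1$. This gives
$(\sup_{\mathfrak m}|T^\sharp|)^{1-2H_0\theta}\,\Upsilon_1^{\theta}\,\Upsilon_2^{(j-1)\omega}\,\Upsilon_3^{(h-j)\omega}$, where:
\begin{itemize}
\item $\Upsilon_1=\int_0^1|T^\sharp|^{2H_0}\ll N^{2H_0\omega-1+o(1)}$;
\item $\Upsilon_2,\Upsilon_3$ are the $L^{1/\omega}$ moments of $T^\sharp$ and $T$. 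They are $N^{o(1)}$, because $1/\omega>h>2H_0$ puts Lemma \ref{pss-weighted-mv} in range and $\frac{1}{\omega}\cdot\omega-1=0$.
\end{itemize}
The total is $N^{h\omega-1-\lambda(1-2H_0\theta)+o(1)}$. The positivity of $1-2H_0\theta\geq\frac{h-2H_0}{h}-2H_0(h-1)\delta$ is exactly where the hypothesis on $\delta$ is used.
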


 To bound the value of $T^{\sharp}$ for $\alpha\in\mathfrak{m}$, we apply a result of Madritsch \cite[Sections 5--7]{mad26}.
 
 \begin{lem}\label{pss-minor-sup}
  There exists $\lambda=\lambda(c,\nu)>0$ such that
  \[ \sup_{\alpha\in\mathfrak{m}} |T^{\sharp}(\alpha;N)|\ll N^{\omega-\lambda+o(1)}. \]
 \end{lem}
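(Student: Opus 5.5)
The plan is to reduce $T^{\sharp}(\alpha;N)$ to unweighted exponential sums over the generating integers $m$ and then quote Madritsch's minor arc bound.

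\textbf{Step 1: change of variables.} Every $n\in\N_{(c)}$ with $N/h\le n\le N$ can be written as $n=\lfloor m^c\rfloor$. For $c>1$ the map $m\mapsto\lfloor m^c\rfloor$ is injective once $m$ is large. The relevant range of $m$ is $M_0:=(N/h)^{1/c}\ll m\le (N+1)^{1/c}=:M_1$, up to $O(1)$ endpoint terms. This gives
\[ T^{\sharp}(\alpha;N)=\sum_{M_0\le m\le M_1}\lfloor m^c\rfloor^{\omega-1/c}\,e(\alpha\lfloor m^c\rfloor)+O(N^{\omega-1/c}). \]
We may then replace $\lfloor m^c\rfloor^{\omega-1/c}$ by $g(m):=m^{c\omega-1}$. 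The error per term is $O(m^{c\omega-1-c})$, and summing over the range gives $O(N^{\omega-1})$.

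\textbf{Step 2: partial summation.} The weight $g$ is monotone on $[M_0,M_1]$ and satisfies $g(m)\asymp N^{\omega-1/c}$ there, since the range is dyadic-like. Abel summation therefore gives
\[ |T^{\sharp}(\alpha;N)|\ll N^{\omega-1/c}\max_{M_0\le y\le M_1}\bigg|\sum_{M_0\le m\le y}e(\alpha\lfloor m^c\rfloor)\bigg|+O(N^{\omega-1}). \]
So it suffices to show that
\[ \sup_{\alpha\in\mathfrak m}\ \max_{y\le M_1}\bigg|\sum_{M_0\le m\le y}e(\alpha\lfloor m^c\rfloor)\bigg|\ll M_1^{1-c\lambda+o(1)}, \]
because $M_1^{1-c\lambda}N^{\omega-1/c}\asymp N^{\omega-\lambda}$.

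\textbf{Step 3: Madritsch's bound.} This is the main input and the main obstacle. Madritsch \cite[Sections 5--7]{mad26} proceeds in two stages. First he expands $e(\alpha\lfloor m^c\rfloor)=e(\alpha m^c)e(-\alpha\{m^c\})$ and Fourier-approximates the periodic function $t\mapsto e(-\alpha\{t\})$, using a Vaaler-type truncation with $H=M^{\epsilon'}$ terms. This reduces the problem to sums of the form $\sum_m e((\alpha+\ell)m^c)$ with $|\ell|\le H$, plus an error controlled by the discrepancy of $m^c$ modulo $1$, which is itself a power saving for non-integral $c$.

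Second, for $\alpha$ with $\|\alpha\|>L/N=N^{\nu-1}$, the phase $\beta m^c$ with $\beta=\alpha+\ell$ satisfies $|\beta|\ge N^{\nu-1}$. If $|\beta|$ is not too large, van der Corput derivative tests (or Vinogradov-type bounds for fractional monomials) apply, because the $r$-th derivative has size $|\beta|M^{c-r}$ with $|\beta|M^c\ge N^{\nu}$. If $|\beta|$ is large, the sum is handled by the standard estimates for $\sum e(\beta m^c)$, and these give power savings uniformly in $\beta$ since $c$ is non-integral.

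The delicate point is the boundary region where $\|\alpha\|$ is just above $N^{\nu-1}$. There the first-derivative test (Kusmin--Landau) gives a saving of order $1/(|\beta|M^{c-1})\ll M N^{-\nu}$. So the saving depends on $\nu$, which is why $\lambda=\lambda(c,\nu)$. I would check that Madritsch's statement covers all of $\mathfrak m$ away from $0$, including $\alpha$ near $1$, which we treat by symmetry via $\|\alpha\|$. I would also check that his bound is uniform over the endpoint $y$, which follows since his argument treats arbitrary subintervals.

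Combining Steps 1--3 gives $\sup_{\mathfrak m}|T^{\sharp}|\ll N^{\omega-\lambda+o(1)}$, with $\lambda$ taken smaller than $\min\{1/c,\,1-1/c\}$ so that the error terms from Steps 1 and 2 are absorbed.
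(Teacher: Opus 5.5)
Your proposal is correct and follows essentially the same route as the paper: write $T^{\sharp}$ as a sum over $m$ with $n=\lfloor m^c\rfloor$, remove the monotone weight of size $\asymp N^{\omega-1/c}$ by partial summation, and then apply Madritsch's minor arc bound for $\sum e(\alpha\lfloor m^c\rfloor)$. Where your Step 3 sketches Madritsch's mechanism, the paper simply quotes his bound in explicit form: it holds for $\|\alpha\|\geq P^{-c+\xi}$ with $\xi<c\nu$ and saves $P^{-\sigma}$ with $\sigma=\xi/(2\lceil c\rceil(\lceil c\rceil+1))$. This makes precise the containment $\mathfrak{m}\subseteq\{\|\alpha\|\geq P^{-c+\xi}\}$ that you flag as needing to be checked, and yields any fixed $\lambda<\sigma/c$.
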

 \begin{proof}
  Recall that $\nu < \frac{1}{3}\min\{\frac{1}{c},\omega_0\}$. Put $P=(N+1)^{1/c}$ and choose
  \[ 0 < \xi < \min\{c\nu,c-1,\tfrac{1}{5}\}. \]
  We use the following minor arc estimate of Madritsch \cite[Sections 5--7, especially (6.15) and (7.1)]{mad26}, specialized to $f(x)=x^c$: for every $\eps>0$,
  \[ \sup_{\|\alpha\|\geq P^{-c+\xi}} \bigg|\sum_{m\leq P} e(\alpha\lfloor m^c\rfloor)\bigg|
   \ll_{c,\xi,\eps} P^{1-\sigma+\eps}, \qquad \sigma := \frac{\xi}{2\lceil c\rceil(\lceil c\rceil+1)}. \]
  By subtraction, the same estimate holds uniformly for sums over intervals $M_1<m\leq M_2\leq P$.

  If $\alpha\in\mathfrak{m}$, then $\|\alpha\|\geq N^{-1+\nu}$. Since $\xi < c\nu$, it follows that
  \[ \|\alpha\|\geq N^{-1+\nu}\gg P^{-c+c\nu}\geq P^{-c+\xi}. \]
  Hence, uniformly for $\alpha\in\mathfrak{m}$ and for intervals $M_1<m\leq M_2\leq P$, we have
  \[ \sum_{M_1<m\leq M_2}e(\alpha\lfloor m^c\rfloor) \ll P^{1-\sigma+o(1)}. \]
  Since $n\in\N_{(c)}$ if and only if $n=\lfloor m^c\rfloor$ for some $m$, we may write
  \[ T^{\sharp}(\alpha;N) = \sum_{(N/h)^{1/c}\leq m\leq N^{1/c}} \lfloor m^c\rfloor^{\omega-\frac{1}{c}} e(\alpha\lfloor m^c\rfloor) + O(N^{\omega-\frac{1}{c}}). \]
  On this range, $\lfloor m^c\rfloor\asymp N$, and the weights are monotone. Therefore, by partial summation,
  \[ T^{\sharp}(\alpha;N) \ll N^{\omega-\frac1c}P^{1-\sigma+o(1)} = N^{\omega-\frac{\sigma}{c}+o(1)}. \]
  Thus the lemma follows by taking any fixed
  \[ 0<\lambda<\frac{\sigma}{c} = \frac{\xi}{2c\lceil c\rceil(\lceil c\rceil+1)}. \qedhere \]
 \end{proof}

 Next, we need an auxiliary lemma concerning the integrals of $T$ and $T^{\sharp}$.

 \begin{lem}\label{pss-weighted-mv}
  For every $\ell\geq 2H_0(c)$, we have
  \[ \int_0^1 |T(\alpha;x)|^\ell\,\mathrm{d}\alpha \ll x^{\max\{\ell\omega-1,0\}+o(1)},\qquad \int_0^1 |T^{\sharp}(\alpha;x)|^\ell\,\mathrm{d}\alpha \ll x^{\ell\omega-1+o(1)}. \]
 \end{lem}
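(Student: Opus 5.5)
The plan is to reduce both bounds to the Hua-type estimate of Proposition \ref{pss-hua} by a dyadic decomposition and partial summation. Consider first the unweighted exponential sum over a dyadic block,
\[ S_Y(\alpha) := \sum_{Y< n\leq 2Y} \mathbbm{1}_{\N_{(c)}}(n)\, e(n\alpha). \]
Take $\ell\geq 2H_0(c)$ and write $\ell = 2s+r$ with $s\geq H_0(c)$ an integer and $r\in\{0,1\}$ (if $\ell$ is not an integer, interpolate via Hölder between $2s$ and $2s+2$, which is harmless). By Parseval and Proposition \ref{pss-hua} with $h=s$, applied at height $2sY$,
\[ \int_0^1 |S_Y|^{2s}\,\mathrm{d}\alpha \le \sum_{n\le 2sY} r_{\N_{(c)},s}(n)^2 \ll Y^{2s/c-1+o(1)}. \]
Here I have used that the sum over $n\le 2Y$ is dominated by the full sum over $\N_{(c)}\cap[1,2Y]$ coefficientwise. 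The odd case follows from $|S_Y|\le \|S_Y\|_\infty\ll Y^{1/c}$, which gives $\int_0^1|S_Y|^\ell \ll Y^{\ell/c-1+o(1)}$ for every $\ell\ge 2H_0(c)$.

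Next I attach the weights. On the block, $n^{\omega-1/c}$ is monotone and of size $\asymp Y^{\omega-1/c}$. Partial summation therefore gives
\[ T_Y(\alpha):=\sum_{Y<n\le 2Y} n^{\omega-\frac1c}\mathbbm{1}_{\N_{(c)}}(n)e(n\alpha) \ll Y^{\omega-\frac1c}\sup_{Y<u\le 2Y}\Big|\sum_{Y<n\le u}\mathbbm{1}_{\N_{(c)}}(n)e(n\alpha)\Big|. \]
The supremum over $u$ is the only delicate point. To avoid a maximal inequality, I would instead write the partial summation as an integral, $T_Y = (2Y)^{\omega-1/c}S_Y(\alpha) - \int_Y^{2Y} S_{Y,u}(\alpha)\,\mathrm{d}(u^{\omega-1/c})$, where $S_{Y,u}$ denotes the truncated sum up to $u$. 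Applying Minkowski's inequality in $L^\ell(\mathrm{d}\alpha)$ then requires the $\ell$-th moment bound for each truncated sum $S_{Y,u}$ uniformly in $u$. This holds by the same Parseval argument, since truncation only removes terms and so $r$ for the truncated set is dominated by $r_{\N_{(c)},s}$. The result is
\[ \|T_Y\|_\ell^\ell \ll Y^{\ell\omega-\ell/c}\,Y^{\ell/c-1+o(1)} = Y^{\ell\omega-1+o(1)}. \]

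For $T^\sharp(\alpha;x)$ the range $[x/h,x]$ is covered by $O(1)$ dyadic blocks with $Y\asymp x$. Minkowski's inequality then gives the second bound directly, $\ll x^{\ell\omega-1+o(1)}$. For $T(\alpha;x)$ I would sum over $Y=2^{-j}x$, $j\le\log_2 x$, plus the bounded initial segment. By Minkowski,
\[ \|T\|_\ell\le \sum_Y \|T_Y\|_\ell \ll \sum_{Y} Y^{\omega-1/\ell+o(1)}. \]
If $\ell\omega>1$ this is a geometric series dominated by $Y=x$, giving $x^{\ell\omega-1+o(1)}$. If $\ell\omega\le 1$, each term is $\ll x^{o(1)}$ and there are $O(\log x)$ terms, giving $x^{o(1)}$. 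Together these yield $x^{\max\{\ell\omega-1,0\}+o(1)}$.

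The main thing to check is the uniformity of the $o(1)$ in $Y$ when summing over blocks. The $\eps$-losses in Proposition \ref{pss-hua} are of the form $Y^{\eps}\le x^{\eps}$, so this is fine. The handling of the partial-summation supremum described above is the other step needing care.
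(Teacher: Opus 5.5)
Your proposal is correct and follows essentially the paper's route. The paper likewise gets $L^\ell$ bounds for the unweighted indicator sums from Proposition \ref{pss-hua} plus the trivial bound $\ll t^{1/c}$, then applies the integral form of partial summation with Minkowski's inequality in $L^\ell$. The one difference is your preliminary dyadic decomposition, which is redundant: the paper applies partial summation directly on $[1,x]$ (resp.\ $[x/h,x]$) using $g(\alpha;t)=\sum_{n\le t}\mathbbm{1}_{\N_{(c)}}(n)e(n\alpha)$, and the integral $\int_1^x t^{\omega-1/\ell-1+o(1)}\,\mathrm{d}t$ plays the role of your sum over blocks.
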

 \begin{proof}
  Put $g(\alpha;t):=\sum_{n\leq t} \mathbbm{1}_{\N_{(c)}}(n)\,e(n\alpha)$. Since trivially $|g(\alpha;t)|\ll t^{1/c}$, it follows by Proposition \ref{pss-hua} and orthogonality that, for every $\ell\geq 2H_0(c)$,
  \[ \int_0^1 |g(\alpha;t)|^\ell\,\mathrm{d}\alpha \ll t^{(\ell-2H_0(c))/c} \int_0^1 |g(\alpha;t)|^{2H_0(c)}\,\mathrm{d}\alpha \ll t^{\ell/c-1+o(1)}. \]
  Equivalently, $\|g(\cdot;t)\|_\ell\ll t^{1/c-1/\ell+o(1)}$. By partial summation,
  \[ T(\alpha;x) = x^{\omega-\frac{1}{c}}g(\alpha;x) - \int_1^x g(\alpha;t)\,\mathrm{d}(t^{\omega-\frac{1}{c}}). \]
  Hence, by the triangle inequality
  \begin{align*}
   \|T(\cdot;x)\|_\ell &\ll x^{\omega-\frac{1}{c}} \|g(\cdot;x)\|_\ell + \int_1^x t^{\omega-\frac{1}{c}-1}\|g(\cdot;t)\|_\ell\,\mathrm{d}t \\
   &\ll x^{\omega-\frac{1}{\ell}+o(1)} + \int_1^x t^{\omega-\frac{1}{\ell}-1+o(1)}\,\mathrm{d}t \\
   &\ll x^{\max\{\omega-\frac{1}{\ell},0\}+o(1)},
  \end{align*}
  and raising to the power $\ell$ gives the first estimate.

  For $T^{\sharp}$, put $y := \lceil x/h\rceil - 1$. Partial summation gives
  \[ T^{\sharp}(\alpha;x) = x^{\omega-\frac{1}{c}}g(\alpha;x) - y^{\omega-\frac{1}{c}} g(\alpha;y) -\int_{y}^x g(\alpha;t)\,\mathrm{d}(t^{\omega-\frac{1}{c}}). \]
  Hence
  \begin{align*}
   \|T^{\sharp}(\cdot;x)\|_\ell &\ll x^{\omega-\frac{1}{c}} (x^{\frac{1}{c}-\frac{1}{\ell}+o(1)}) + \int_{\lceil x/h\rceil -1}^x t^{\omega-\frac{1}{c}-1} (t^{\frac{1}{c}-\frac{1}{\ell}+o(1)})\,\mathrm{d}t  \\
   &\ll x^{\omega-\frac{1}{\ell}+o(1)},
  \end{align*}
  and raising to the power $\ell$ finishes the proof.
 \end{proof}
 
 We are now ready to prove Proposition \ref{minor-pss}.
 
 \begin{proof}[Proof of Proposition \ref{minor-pss}]
  We first treat the case $\omega\geq 1/h$. By Lemmas \ref{pss-minor-sup} and \ref{pss-weighted-mv},
  \begin{align*}
   \int_{\mathfrak{m}}|T^{\sharp}(\alpha;N)|^h\,\mathrm{d}\alpha &\leq \bigg(\sup_{\alpha\in\mathfrak{m}}| T^{\sharp}(\alpha;N)|\bigg)^{h-2H_0(c)} \int_0^1 |T^{\sharp}(\alpha;N)|^{2H_0(c)}\,\mathrm{d}\alpha \\
   &\ll (N^{\omega-\lambda+o(1)})^{h-2H_0(c)} N^{2H_0(c)\omega-1+o(1)} \\
   &\ll N^{h\omega-1-\lambda+o(1)}.
  \end{align*}
  Since $\omega\geq 1/h$, Lemma \ref{pss-weighted-mv} gives
  \[ \int_0^1|T(\alpha;N)|^h\,\mathrm{d}\alpha\ll N^{h\omega-1+o(1)}. \]
  Therefore, by H\"older's inequality,
  \begin{align*}
   &\sum_{j=1}^h \bigg| \int_{\mathfrak{m}} T^{\sharp}(\alpha;N)^j T(\alpha;N)^{h-j} e(-N\alpha)\,\mathrm{d}\alpha \bigg| \\
   &\hspace{+12em} \leq \sum_{j=1}^h \bigg(\int_{\mathfrak{m}}|T^{\sharp}(\alpha;N)|^h\,\mathrm{d}\alpha\bigg)^{j/h} \bigg(\int_0^1|T(\alpha;N)|^h\,\mathrm{d}\alpha\bigg)^{1-j/h} \\
   &\hspace{+12em}\ll N^{h\omega-1- \lambda/h + o(1)},
  \end{align*}
  thus giving the required power saving.

  Now suppose that $1/h-\delta\leq \omega<1/h$, and put $\theta:=1-(h-1)\omega$. Then
  \[ \theta \leq \frac{1}{h} + (h-1)\delta < \frac{1}{2H_0(c)}. \]
  For $1\leq j\leq h$, H\"older's inequality (cf. Wooley \cite[Eq. (2.7)]{woo04}) gives
  \begin{equation}
   \bigg| \int_{\mathfrak{m}}T^{\sharp}(\alpha;N)^j T(\alpha;N)^{h-j} e(-N\alpha)\,\mathrm{d}\alpha \bigg| \ll \bigg(\sup_{\alpha\in\mathfrak{m}}|T^{\sharp}(\alpha;N)|\bigg)^{1-2H_0(c)\theta} \Upsilon_1^\theta \Upsilon_2^{(j-1)\omega} \Upsilon_3^{(h-j)\omega}, \label{pss-holder2}
  \end{equation}
  where
  \[ \Upsilon_1:=\int_0^1|T^{\sharp}(\alpha;N)|^{2H_0(c)}\,\mathrm{d}\alpha,\quad \Upsilon_2:=\int_0^1|T^{\sharp}(\alpha;N)|^{1/\omega}\,\mathrm{d}\alpha,\quad \Upsilon_3:=\int_0^1|T(\alpha;N)|^{1/\omega}\,\mathrm{d}\alpha. \]
  By Lemma \ref{pss-minor-sup},
  \[ \sup_{\alpha\in\mathfrak{m}}|T^{\sharp}(\alpha;N)|\ll N^{\omega-\lambda+o(1)}. \]
  Moreover, Lemma \ref{pss-weighted-mv} gives
  \[ \Upsilon_1\ll N^{2H_0(c)\omega-1+o(1)}. \]
  Since $\omega<1/h$, we have $1/\omega>h>2H_0(c)$, and so Lemma \ref{pss-weighted-mv} also gives
  \[ \Upsilon_2,\Upsilon_3\ll N^{o(1)}. \]
  Substituting these estimates into \eqref{pss-holder2}, we obtain
  \[ \bigg| \int_{\mathfrak{m}}T^{\sharp}(\alpha;N)^j T(\alpha;N)^{h-j}e(-N\alpha)\,\mathrm{d}\alpha \bigg| \ll N^{h\omega-1-\lambda(1-2H_0(c)\theta)+o(1)}. \]
  Finally,
  \[ 1-2H_0(c)\theta \geq \frac{h-2H_0(c)}{h}-2H_0(c)(h-1)\delta > 0 \]
  by the bound on $\delta$. Thus the minor arc contribution is $O(N^{h\omega-1-\eta'})$ for some $\eta'>0$.
 \end{proof}

\subsection{Proof of Theorem \ref{MTc}}
 Since $|\N_{(c)}\cap[1,x]| = x^{1/c} + O(1)$, the set $\N_{(c)}$ has regularly varying counting function. By Proposition \ref{pss-hua}, it also satisfies \eqref{hua-type} with $H_0(\N_{(c)}) = H_0(c)$. Theorem \ref{MTcest} gives \eqref{mainest} with $\mathscr{S}=\N$ and $\mathfrak{S}(n)$ a constant, for $h\geq 2H_0(c)+1$. Hence, Theorem \ref{main-tech} yields Theorem \ref{MTc}. \hfill$\square$

%%%%%%%%%%%%%%%%%%%%%%%%%%%%%%%%%%
\section{Piatetski-Shapiro powers}\label{sec-psw}
 In this section we prove Theorem \ref{MTPSSkappa}. We assume throughout that $\N^k_{(c)}$ satisfies \eqref{PSW}. We start by establishing a weighted Hua-type estimate for $\N^{k}$, from which a Hua-type estimate for $\N_{(c)}^k$ (Proposition \ref{PSShua}) will be derived.
 
 \begin{lem}\label{whuaPPow}
  Let $H_0 = H_0(k)$ be as in \eqref{h0k1}. For every $\ell \geq 1$, we have
  \[ \int_0^1 \bigg| \sum_{n \leq x} n^{\omega}\, \frac{\mathbbm{1}_{\N^k}(n)}{n^{1/k}}\,e(n\alpha) \bigg|^{\ell} \, \mathrm{d}\alpha \ll x^{\max\{\ell\omega - \frac{\ell}{\ell^{*}},\,0\} + o(1)}, \]
  where $\ell^{*} := \max\{\ell,2H_0\}$. Moreover, for any $C\geq 1$,
  \[ \int_0^1 \bigg| \sum_{x/C \leq n \leq x} n^{\omega}\, \frac{\mathbbm{1}_{\N^k}(n)}{n^{1/k}}\,e(n\alpha) \bigg|^{\ell} \, \mathrm{d}\alpha \ll_C x^{\ell\omega - \frac{\ell}{\ell^{*}} + o(1)}. \]
 \end{lem}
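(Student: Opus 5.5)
We follow the proof of Lemma \ref{pss-weighted-mv}, now with $\N^k$ in place of $\N_{(c)}$ and Hua's inequality as the input mean value bound. Put
\[ g(\alpha;t):=\sum_{n\leq t}\mathbbm{1}_{\N^k}(n)\,e(n\alpha)=\sum_{m\leq t^{1/k}}e(\alpha m^k). \]
The first step is the unweighted bound
\[ \|g(\cdot;t)\|_\ell\ll t^{\frac1k-\frac{1}{\ell^*}+o(1)} \]
for every $\ell\geq1$. For $\ell\geq 2H_0$ this is the classical Hua-type estimate
\[ \int_0^1|g(\alpha;t)|^{2H_0}\,\mathrm{d}\alpha\ll t^{2H_0/k-1+o(1)}, \]
which is Hua's lemma for $1\leq k\leq4$ and the Vinogradov mean value theorem input for $k\geq5$ with $H_0(k)$ as in \eqref{h0k1}; this is exactly the estimate behind the Hua-type bound used in \cite{tafWWG}. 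Combining it with the trivial bound $|g|\ll t^{1/k}$ gives the claim for $\ell\ge2H_0$. For $\ell<2H_0$ we use monotonicity of $L^p$ norms on $[0,1]$: $\|g\|_\ell\leq\|g\|_{2H_0}\ll t^{1/k-1/(2H_0)+o(1)}$, which is the claimed bound since $\ell^*=2H_0$ in this case.

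Next we pass to the weighted sum by partial summation:
\[ \sum_{n\leq x}n^{\omega-\frac1k}\mathbbm{1}_{\N^k}(n)e(n\alpha)=x^{\omega-\frac1k}g(\alpha;x)-\int_1^x g(\alpha;t)\,\mathrm{d}(t^{\omega-\frac1k}). \]
Minkowski's inequality for the $L^\ell$ norm, valid for $\ell\geq1$, then gives
\[ \ll x^{\omega-\frac1{\ell^*}+o(1)}+\int_1^x t^{\omega-\frac1{\ell^*}-1+o(1)}\,\mathrm{d}t\ll x^{\max\{\omega-\frac1{\ell^*},0\}+o(1)}. \]
Raising this to the $\ell$-th power gives the exponent $\max\{\ell\omega-\ell/\ell^*,0\}+o(1)$. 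When $\omega=1/\ell^*$ the integral contributes a factor $\log x$, which is absorbed into $x^{o(1)}$. For the dyadic-type range $x/C\leq n\leq x$ we write the sum as boundary terms at $x$ and at $y=\lceil x/C\rceil-1$, plus an integral over $[y,x]$. On this range $t\asymp_C x$, so every term is $\ll_C x^{\omega-1/\ell^*+o(1)}$ and no maximum with $0$ appears.

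The only genuine input is the Hua/Vinogradov mean value bound at the exponent $2H_0(k)$. Its availability with exactly this $H_0(k)$ — classical Hua for $k\le4$, and Bourgain–Demeter–Guth/Wooley for $k\geq5$ — is the step to cite carefully, for instance via the Hua-type estimate in \cite{tafWWG}. Everything else is routine, since the $x^{o(1)}$ losses are uniform in $t$ and the range $\ell<1$ is excluded by hypothesis.
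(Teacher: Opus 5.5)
Your proposal is correct and follows essentially the same route as the paper. Both arguments prove the unweighted bound $\|g(\cdot;t)\|_\ell\ll t^{1/k-1/\ell^*+o(1)}$ (using $L^p$-monotonicity when $\ell\le 2H_0$, and interpolation with the trivial bound $\|g\|_\infty\ll t^{1/k}$ when $\ell>2H_0$), then apply partial summation with Minkowski's inequality, and handle the range $x/C\le n\le x$ via boundary terms at $x$ and $y=\lceil x/C\rceil-1$. The only difference is in citing the mean value input: the paper takes $\int_0^1|g(\alpha;t)|^{2h}\,\mathrm{d}\alpha\ll t^{2h/k-1+o(1)}$ for all $h\geq H_0(k)$ directly from Wooley's Corollary 14.7, rather than splitting into Hua's lemma for $k\le 4$ and Vinogradov-type input for $k\ge 5$.
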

 \begin{proof}
  As shown by Wooley \cite[Corollary 14.7]{woo19}, we have
  \begin{equation}
   \sum_{n\leq x} r_{\N^{k},h}(n)^{2} \leq \int_{0}^{1} \bigg|\sum_{n\leq x} \mathbbm{1}_{\N^k}(n)\, e(\alpha n)\bigg|^{2h}\,\mathrm{d}\alpha \ll x^{\frac{2h}{k}-1 + o(1)} \quad (\forall h\geq H_0(k)). \label{Phua}
  \end{equation}
  Let $g(\alpha;x):=\sum_{n\leq x}\mathbbm{1}_{\N^k}(n)\,e(n\alpha)$.   We claim that, uniformly for $1\leq t\leq x$,
  \[ \|g(\cdot;t)\|_{\ell}\ll t^{\frac{1}{k}-\frac{1}{\ell^*}+o(1)}. \]
  Indeed, if $\ell\leq 2H_0$, then $\ell^*=2H_0$, and the claim follows from \eqref{Phua} and the monotonicity of $L^p$-norms. Now suppose that $\ell>2H_0$, so that $\ell^*=\ell$. By the trivial bound $\|g(\cdot;t)\|_{\infty}\leq |\N^k\cap[1,t]|\ll t^{1/k}$ and \eqref{Phua} with $h=H_0$, we have
  \[ \|g(\cdot;t)\|_{\ell}^{\ell}\leq \|g(\cdot;t)\|_{\infty}^{\ell-2H_0}\|g(\cdot;t)\|_{2H_0}^{2H_0}\ll t^{\frac{\ell-2H_0}{k}}t^{\frac{2H_0}{k}-1+o(1)}=t^{\frac{\ell}{k}-1+o(1)}. \]
  Taking $\ell$-th roots gives $\|g(\cdot;t)\|_{\ell}\ll t^{\frac{1}{k}-\frac{1}{\ell}+o(1)}$.

  By partial summation,
  \[ \sum_{n\leq x} n^{\omega}\,\frac{\mathbbm{1}_{\N^k}(n)}{n^{1/k}}\,e(n\alpha) = x^{\omega-\frac{1}{k}} g(\alpha;x)-(\omega-\tfrac{1}{k})\int_1^x t^{\omega-\frac{1}{k}-1}g(\alpha;t)\,\mathrm{d}t. \]
  Hence, by the triangle inequality,
  \begin{align*}
   \bigg\|\sum_{n\leq x} n^{\omega}\,\frac{\mathbbm{1}_{\N^k}(n)}{n^{1/k}}\,e(n\alpha)\bigg\|_{\ell}
   &\ll x^{\omega-\frac{1}{k}}\|g(\cdot;x)\|_{\ell}+\int_1^x t^{\omega-\frac{1}{k}-1}\|g(\cdot;t)\|_{\ell}\,\mathrm{d}t \\
   &\ll x^{\omega-\frac{1}{\ell^*}+o(1)}+\int_1^x t^{\omega-\frac{1}{\ell^*}-1+o(1)}\,\mathrm{d}t.
  \end{align*}
  If $\omega>1/\ell^*$, the last expression is $\ll x^{\omega-\frac{1}{\ell^*}+o(1)}$. If $\omega=1/\ell^*$, it is $\ll x^{o(1)}$. If $\omega<1/\ell^*$, it is $\ll 1$. Raising to the power $\ell$ gives the result.

  For the short interval estimate, put $y:=\lceil x/C\rceil-1$. By partial summation,
  \begin{align*}
   \sum_{x/C\leq n\leq x} n^{\omega}\,\frac{\mathbbm{1}_{\N^k}(n)}{n^{1/k}}\,e(n\alpha)
   &= x^{\omega-\frac{1}{k}} g(\alpha;x) - y^{\omega-\frac{1}{k}} g(\alpha;y)-(\omega-\tfrac{1}{k})\int_y^x t^{\omega-\frac{1}{k}-1}g(\alpha;t)\,\mathrm{d}t.
  \end{align*}
  Since $y\asymp_C x$, the same argument gives
  \begin{align*}
   \bigg\|\sum_{x/C\leq n\leq x} n^{\omega}\,\frac{\mathbbm{1}_{\N^k}(n)}{n^{1/k}}\,e(n\alpha)\bigg\|_{\ell}
   &\ll_C x^{\omega-\frac{1}{\ell^*}+o(1)}+\int_{\lceil x/C\rceil - 1}^{x} t^{\omega-\frac{1}{\ell^*}-1+o(1)}\,\mathrm{d}t \\
   &\ll_C x^{\omega-\frac{1}{\ell^*}+o(1)}.
  \end{align*}
  Raising to the power $\ell$ finishes the proof.
 \end{proof}

 \begin{prop}[Hua-type bound for $\N_{(c)}^{k}$]\label{PSShua}
  Suppose \eqref{PSW} holds, and let $H_0(k)$ be as in \eqref{h0k1}. For every $h\geq \max\{H_0(k), \tfrac{1}{2} P(c,k)^{-1}\}$,
  \[ \sum_{n\leq x} r_{\N^k_{(c)},h}(n)^2 \ll x^{\frac{2h}{ck}-1+o(1)}. \]
 \end{prop}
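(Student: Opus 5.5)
Let $G(\alpha;x):=\sum_{n\leq x}\mathbbm{1}_{\N^k_{(c)}}(n)\,e(n\alpha)$. By Parseval, as in the proof of Proposition \ref{pss-hua},
\[ \sum_{n\leq x} r_{\N^k_{(c)},h}(n)^2\leq \int_0^1|G(\alpha;hx)|^{2h}\,\mathrm{d}\alpha, \]
so it suffices to show $\|G(\cdot;x)\|_{2h}^{2h}\ll x^{2h/ck-1+o(1)}$. The plan is to transfer the problem to the weighted sum over $\N^k$ using \eqref{PSW}. We write
\[ G(\alpha;x)=\frac1c\,W(\alpha;x)+E(\alpha;x),\qquad W(\alpha;x):=\sum_{n\le x} n^{(\frac1c-1)\frac1k}\,\mathbbm{1}_{\N^k}(n)\,e(n\alpha), \]
where $\|E(\cdot;x)\|_\infty\ll x^{\frac{1}{ck}-P(c,k)}$ uniformly in $\alpha$. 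By Minkowski's inequality, $\|G\|_{2h}\ll \|W\|_{2h}+\|E\|_{2h}$, so we treat the two terms separately.

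For the main term, we observe that $n^{(\frac1c-1)\frac1k}=n^{\omega}n^{-1/k}$ with $\omega=\frac{1}{ck}$. Lemma \ref{whuaPPow} with $\ell=2h$ and $\omega=1/ck$ then applies, and since $h\geq H_0(k)$ we have $\ell^*=2h$. This gives
\[ \int_0^1|W(\alpha;x)|^{2h}\,\mathrm{d}\alpha\ll x^{\max\{\frac{2h}{ck}-1,0\}+o(1)}. \]
To see that the maximum is the first entry, note that $P(c,k)\leq 1/ck$ and $h\geq \tfrac12 P(c,k)^{-1}$, so $2h/ck\geq 2hP(c,k)\geq 1$. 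Hence the main term is $\ll x^{2h/ck-1+o(1)}$, as required.

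For the error term, we use only the trivial bound $\|E\|_{2h}\leq\|E\|_\infty$, giving $\|E\|_{2h}^{2h}\ll x^{\frac{2h}{ck}-2hP(c,k)}$. This is $\leq x^{\frac{2h}{ck}-1}$ exactly when $2hP(c,k)\geq 1$, which is the hypothesis $h\geq \tfrac12 P(c,k)^{-1}$. This step is the only delicate point and is where that threshold on $h$ comes from. An $L^\infty$ bound is the only information we have on $E$, so $2hP(c,k)\ge1$ is the natural condition and no interpolation is needed. Combining the two estimates gives the proposition.
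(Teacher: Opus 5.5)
Your proposal is correct and follows essentially the same route as the paper: transfer to $\N^k$ via \eqref{PSW}, bound the main term by Lemma \ref{whuaPPow} with $\ell=2h$ and $\omega=1/ck$, and absorb the $L^\infty$ error contribution $x^{2h/ck-2hP(c,k)}$ using $2hP(c,k)\geq 1$. The differences are minor: you use Minkowski where the paper uses the pointwise inequality $(x+y)^{2h}\ll x^{2h}+y^{2h}$, and your check that $2h/ck-1\geq 0$ (so the maximum in Lemma \ref{whuaPPow} is attained by its first entry) makes explicit a step the paper leaves implicit.
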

 \begin{proof}
  Lemma \ref{whuaPPow} with $\ell=2h$, $\omega = 1/ck$ gives
  \[ \int_0^1 \bigg|\frac{1}{c}\sum_{n\leq x} n^{(\frac{1}{c}-1)\frac{1}{k}}\mathbbm{1}_{\N^k}(n)\,e(\alpha n)\bigg|^{2h}\,d\alpha \ll x^{\frac{2h}{ck}-1+o(1)}. \]
  Raising both sides of \eqref{PSW} to the $2h$-th power and integrating, the inequality $(x+y)^{2h}\ll_h x^{2h}+y^{2h}$ gives
  \begin{align*}
   \sum_{n\leq x} r_{\N^k_{(c)},h}(n)^2 &\leq \int_0^1 \bigg|\sum_{n\leq x}\mathbbm{1}_{\N^k_{(c)}}(n)\,e(\alpha n)\bigg|^{2h}\,d\alpha \\
   &\ll \int_0^1 \bigg|\frac{1}{c}\sum_{n\leq x} n^{(\frac{1}{c}-1)\frac{1}{k}}\mathbbm{1}_{\N^k}(n)\,e(\alpha n)\bigg|^{2h}\,d\alpha + \int_0^1 O(x^{2h(\frac{1}{ck}-P(c,k))})\,d\alpha \\
   &\ll x^{\frac{2h}{ck}-1+o(1)} + x^{\frac{2h}{ck}- 2hP(c,k)}.
  \end{align*}
  For $h\geq \tfrac{1}{2} P(c,k)^{-1}$ the second term is absorbed by the first, and the proposition follows.
 \end{proof}
 
 To apply Theorem \ref{main-tech}, we will need the following, direct analogue of \cite[Theorem 4.2]{tafWWG} for $\N_{(c)}^k$, which is based on Wooley \cite[Theorem 1.1]{woo04}:

 \begin{thm}\label{main-PSS1}
  Suppose \eqref{PSW} holds, let $H_0 = H_0(k)$ be as in \eqref{h0k1}, and let $h\geq \max\{2H_0+1, P(c,k)^{-1}+1\}$. Let $\delta>0$ be any real number with
  \[ \delta < \frac{1}{h(h-1)^2}. \]
  Then, for any $\omega\geq 1/h - \delta$,
  \[ \sum_{\substack{x_1,\ldots,x_h\in \N^k_{(c)} \\ x_1+\cdots+x_h = N}} (x_1\cdots x_h)^{\omega - \frac{1}{ck}} = \mathfrak{S}_{k,h}(N)\frac{1}{(ck)^h}\frac{\Gamma(\omega)^h}{\Gamma(h\omega)} N^{h\omega - 1} + O(N^{h\omega -1 -\nu}), \]
  for some $\nu = \nu(k,h,\delta)>0$, where the singular series $\mathfrak{S}_{k,h}(N)$ is defined as in \eqref{singser}.
 \end{thm}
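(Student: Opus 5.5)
The plan is a reduction: use \eqref{PSW} to replace the generating function of $\N^k_{(c)}$ by a smoothly weighted generating function of $\N^k$, then invoke the known weighted Waring asymptotic of \cite[Theorem 4.2]{tafWWG}, which is based on Wooley \cite[Theorem 1.1]{woo04}.

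Define
\[ T(\alpha;N)=\sum_{n\le N} n^{\omega-\frac{1}{ck}}\mathbbm{1}_{\N^k_{(c)}}(n)\,e(n\alpha), \qquad V(\alpha;N)=\frac1c\sum_{n\le N} n^{\omega-\frac1k}\mathbbm{1}_{\N^k}(n)\,e(n\alpha). \]
By orthogonality, the left-hand side of the theorem equals $\int_0^1 T(\alpha;N)^h e(-N\alpha)\,\mathrm{d}\alpha$. Applying the same integral to $V$ gives
\[ \int_0^1 V(\alpha;N)^h e(-N\alpha)\,\mathrm{d}\alpha = c^{-h}\sum_{y_1+\cdots+y_h=N,\ y_i\in\N^k}(y_1\cdots y_h)^{\omega-\frac1k}. \]
By \cite[Theorem 4.2]{tafWWG} (valid for $h\ge 2H_0+1$ and $\omega\ge 1/h-\delta$ in the stated $\delta$-range), this equals
\[ \mathfrak{S}_{k,h}(N)\,\frac{1}{c^h}\,\frac{\Gamma(\omega/k)^h}{k^h\,\Gamma(h\omega/k)}\,N^{h\omega-1}+O(N^{h\omega-1-\nu}). \]
The exponent bookkeeping still has to be done carefully: the Gamma factors in the statement must come out as $\Gamma(\omega)^h/((ck)^h\Gamma(h\omega))$, so the normalisation of $\omega$ in \cite[Theorem 4.2]{tafWWG} (whether the weight is written $x^{\omega-1/k}$ or in terms of the $y_i=x_i^{1/k}$) has to be matched. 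I expect this to be routine.

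The key step is to show that
\[ \int_0^1\big(T^h-V^h\big)e(-N\alpha)\,\mathrm{d}\alpha = O(N^{h\omega-1-\eta}). \]

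First, \eqref{PSW} must be transferred to the weighted sums. Let $D(\alpha;t)$ denote the difference of the unweighted sums in \eqref{PSW}, so $D(\alpha;t)\ll t^{1/ck-P}$ uniformly in $\alpha$. The weight $n^{\omega-1/ck}$ is smooth, so partial summation gives
\[ T-V=N^{\omega-\frac1{ck}}D(\alpha;N)-\int_1^N D(\alpha;t)\,\mathrm{d}\big(t^{\omega-\frac1{ck}}\big) \ll N^{\max\{\omega-P,0\}+o(1)}. \]
Here the weight $n^{(1/c-1)/k}$ appearing in \eqref{PSW} is absorbed, since $n^{(1/c-1)/k}\cdot n^{\omega-1/ck}=n^{\omega-1/k}$.

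Second, expand
\[ T^h-V^h=(T-V)\sum_{i=0}^{h-1}T^iV^{h-1-i}. \]
Bound each term by $\|T-V\|_\infty$ times $\int|T|^i|V|^{h-1-i}$, and apply H\"older to get
\[ \int|T|^i|V|^{h-1-i}\le \|T\|_{h-1}^{i}\,\|V\|_{h-1}^{h-1-i}. \]
For $V$, Lemma \ref{whuaPPow} with $\ell=h-1\ge 2H_0$ gives $\|V\|_{h-1}^{h-1}\ll N^{\max\{(h-1)\omega-1,0\}+o(1)}$.

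For $T$, I will show the same bound. Write $T=V+(T-V)$; then $\|T\|_{h-1}\le\|V\|_{h-1}+\|T-V\|_\infty$. Alternatively, one can use Proposition \ref{PSShua} with partial summation, exactly as in Lemma \ref{pss-weighted-mv}.

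Combining these, the error is
\[ \ll N^{\max\{\omega-P,0\}+\max\{(h-1)\omega-1,0\}+o(1)}. \]

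The main obstacle is the regime $\omega$ near $1/h$, and in particular $\omega<1/(h-1)$. There the bound $\|V\|_{h-1}^{h-1}\ll N^{o(1)}$ leaves an error of size $N^{\max\{\omega-P,0\}}$, and this must be smaller than $N^{h\omega-1}$. That requires $(h-1)\omega-1+P>0$, i.e.\ roughly $P>1-(h-1)/h+(h-1)\delta=1/h+(h-1)\delta$. This is exactly where the hypothesis $h\ge P^{-1}+1$ enters, since it gives $P\ge 1/(h-1)$, together with the smallness of $\delta$: one needs $(h-1)\delta<1/(h-1)-1/h=1/(h(h-1))$, which is guaranteed by $\delta<1/(h(h-1)^2)$.

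If the plain H\"older split turns out to be too lossy, I would instead use the $T^\sharp$ decomposition \eqref{pss-expand} and a H\"older inequality with exponents $1/\omega$, as in \eqref{pss-holder2}, so that $\|T-V\|_\infty$ multiplies only a total of $h-1$ factors, each controlled in $L^{1/\omega}$.

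For large $\omega$ (in particular $\omega>1/(h-1)$), the saving is simply $\min\{P,\omega\}$, which is a power of $N$. Combining the two regimes gives the theorem with $\nu=\min\{\nu_{\mathrm{Waring}},\eta\}$.
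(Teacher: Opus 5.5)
There is a genuine gap. Your reduction (transfer via \eqref{PSW}, then Theorem~\ref{wolem}) is the right framework, but the error analysis in your main line fails exactly where the theorem is needed.

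For $1/h-\delta\le\omega<1/(h-1)$ you have $\omega<1/(h-1)\le P$. So both maxima in your final bound vanish, and the error is $N^{o(1)}$, not $N^{\omega-P}$. Your condition $(h-1)\omega-1+P>0$ compares the main term against the wrong exponent. Since $N^{h\omega-1}\le 1$ for $\omega\le 1/h$, you get no asymptotic at $\omega=1/h$ (the case required for thin subbases) or below. For $1/h<\omega<1/(h-1)$ the saving $h\omega-1$ is not uniform in $\omega$.

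Sharper inputs cannot rescue this form of the argument. The full-range difference $T-V$ has bounded discrepancies at small $n$: the term $n=1$ alone contributes $1-1/c$, so $\|T-V\|_\infty\ge\|T-V\|_2\ge 1-1/c$. Likewise $\|T\|_{h-1},\|V\|_{h-1}\ge 1/c$. Hence the quantity $\|T-V\|_\infty\|T\|_{h-1}^{i}\|V\|_{h-1}^{h-1-i}$ that you bound by is itself $\gg 1$.

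The missing idea is to use the fact that every solution has a summand $\ge N/h$ \emph{before} transferring, as in \eqref{TsharpT}, and to let the dyadic factors carry the saving. Consider a term $(T^\sharp)^jT^{h-j}$ with $j\ge 1$.

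\begin{enumerate}
\item If a dyadic factor is replaced by its transfer error, Lemma~\ref{trnsfr-PSS2} gives $O(N^{\omega-P})$ with no $\max\{\cdot,0\}$, because the range $[N/h,N]$ avoids small $n$. This saves $P-1/h\ge 1/(h(h-1))$.
\item If only full-range errors occur, some $U^\sharp$ factor survives. It must be measured in $L^{h-s}$ with $h-s\le h-1$, not in $L^h$ or $L^{1/\omega}$. The short-interval part of Lemma~\ref{whuaPPow} gives $\|U^\sharp\|_{h-s}\ll N^{\omega-1/(h-s)^*+o(1)}$, where $(h-s)^*:=\max\{h-s,2H_0\}\le h-1$. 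This saves $1/(h-s)^*-1/h\ge 1/(h(h-1))$.
\end{enumerate}

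Against these savings, the loss from $\omega<1/h$ is at most $(h-j)\delta\le (h-1)\delta$. The hypothesis $\delta<1/(h(h-1)^2)$ is exactly what keeps the net saving positive; that is where it genuinely enters, not in your computation.

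Your fallback names the $T^\sharp$ decomposition, but as formulated it still saves nothing. You keep the full-range $\|T-V\|_\infty$ as the sup factor and control the remaining factors in $L^{1/\omega}$. At $\omega\approx 1/h$, Lemma~\ref{whuaPPow} bounds the $L^{1/\omega}$ norms of $T$, $T^\sharp$ and $U^\sharp$ only by $N^{o(1)}$.

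A minor point: Theorem~\ref{wolem} already gives the constant $k^{-h}\Gamma(\omega)^h/\Gamma(h\omega)$ for the weight $(x_1\cdots x_h)^{\omega-1/k}$. Your $\Gamma(\omega/k)$ is a slip, and no renormalisation is needed.
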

 
 The proof of Theorem \ref{main-PSS1} will be based on \eqref{PSW}, which we first generalize to weighted sums. 
 
 \begin{lem}\label{trnsfr-PSS}
  Suppose \eqref{PSW} holds, and let $\omega \in \R$. Then, uniformly for $\alpha\in[0,1)$,  
  \[ \sum_{n\leq x} n^{\omega}\,\frac{\mathbbm{1}_{\N^k_{(c)}}(n)}{n^{1/ck}}\,e(n\alpha) = \frac{1}{c}\sum_{n\leq x} n^{\omega}\,\frac{\mathbbm{1}_{\N^k}(n)}{n^{1/k}}\,e(n\alpha) + O(x^{\max\{\omega-P(c,k),\,0\}+o(1)}). \]
 \end{lem}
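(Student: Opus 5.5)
Note first that $n^{\omega}\,\mathbbm{1}_{\N^k_{(c)}}(n)/n^{1/ck} = n^{\omega-\frac{1}{ck}}\mathbbm{1}_{\N^k_{(c)}}(n)$, while $\frac{1}{c}n^{\omega}\,\mathbbm{1}_{\N^k}(n)/n^{1/k} = n^{\omega-\frac{1}{ck}}\cdot \frac{1}{c}n^{(\frac{1}{c}-1)\frac{1}{k}}\mathbbm{1}_{\N^k}(n)$. So both sides of the claimed identity are obtained from the two sides of \eqref{PSW} by inserting the common smooth weight $t^{\gamma}$ with $\gamma:=\omega-\frac{1}{ck}$. The plan is therefore a single Abel summation applied to the difference. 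Set
\[ D(\alpha;t):=\sum_{n\leq t}\mathbbm{1}_{\N^k_{(c)}}(n)\,e(n\alpha)-\frac{1}{c}\sum_{n\leq t}n^{(\frac{1}{c}-1)\frac{1}{k}}\mathbbm{1}_{\N^k}(n)\,e(n\alpha). \]
By \eqref{PSW}, $D(\alpha;t)\ll t^{\frac{1}{ck}-P(c,k)}$ uniformly in $\alpha\in[0,1)$ for all $t\geq 1$. Here we use that the implied constant in \eqref{PSW} is uniform in $x$ as well, and $D(\alpha;t)$ is bounded for small $t$.

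By partial summation, the difference of the two sides in the lemma equals
\[ x^{\gamma}D(\alpha;x)-\gamma\int_1^x t^{\gamma-1}D(\alpha;t)\,\mathrm{d}t. \]
The boundary term is $\ll x^{\gamma+\frac{1}{ck}-P(c,k)}=x^{\omega-P(c,k)}$. For the integral, the same bound on $D$ gives
\[ \int_1^x t^{\omega-P(c,k)-1}\,\mathrm{d}t, \]
and there are three cases. If $\omega>P(c,k)$, the integral is $\ll x^{\omega-P(c,k)}$. If $\omega=P(c,k)$, it is $\ll\log x=x^{o(1)}$. If $\omega<P(c,k)$, it is $\ll 1$. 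Together these give $O(x^{\max\{\omega-P(c,k),0\}+o(1)})$, uniformly in $\alpha$, since the bound on $D$ is uniform in $\alpha$ and the weights do not depend on $\alpha$.

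There is no real obstacle. The only points needing care are the ones already noted: the bound from \eqref{PSW} must hold for every $t\leq x$ with a uniform constant, not just at the endpoint $x$, and the logarithm in the boundary case $\omega=P(c,k)$ is absorbed into the $x^{o(1)}$.
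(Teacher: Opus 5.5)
Your proposal is correct and follows the paper's proof essentially step for step. Like the paper, you apply partial summation with weight $t^{\omega-\frac{1}{ck}}$ to the remainder in \eqref{PSW}, bound it by $t^{\frac{1}{ck}-P(c,k)}$, and split into the three cases $\omega>P$, $\omega=P$, $\omega<P$. Your remarks on uniformity in $t\le x$ and on absorbing $\log x$ into $x^{o(1)}$ are the same points the paper relies on implicitly.
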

 \begin{proof}
  Write $P=P(c,k)$, and define
  \[ g(\alpha;x) := \sum_{n\leq x} \mathbbm{1}_{\N^k_{(c)}}(n)\, e(n\alpha), \quad G(\alpha;x) := \frac{1}{c} \sum_{n\leq x} n^{(\frac{1}{c}-1)\frac{1}{k}} \mathbbm{1}_{\N^k}(n)\, e(n\alpha). \]
  By \eqref{PSW}, $R(\alpha;x):=g(\alpha;x)-G(\alpha;x)\ll x^{\frac{1}{ck}-P}$ uniformly for $\alpha\in[0,1)$. By partial summation applied to the sequence whose summatory is $R(\alpha;t)$, we have
  \begin{align*}
   &\sum_{n\leq x} n^{\omega}\,\frac{\mathbbm{1}_{\N^k_{(c)}}(n)}{n^{1/ck}}\,e(n\alpha)-\frac{1}{c}\sum_{n\leq x} n^{\omega}\,\frac{\mathbbm{1}_{\N^k}(n)}{n^{1/k}}\,e(n\alpha) \\
   &\hspace{+15em}= x^{\omega-\frac{1}{ck}} R(\alpha;x) - (\omega-\tfrac{1}{ck}) \int_1^x t^{\omega-\frac{1}{ck}-1} R(\alpha;t)\,\mathrm{d}t \\
   &\hspace{+15em}\ll x^{\omega-P}+\int_1^x t^{\omega-P-1}\,\mathrm{d}t.
  \end{align*}
  If $\omega>P$, the last expression is $\ll x^{\omega-P}$. If $\omega=P$, it is $\ll \log x = x^{o(1)}$. If $\omega<P$, it is $\ll 1$. This proves the lemma.
 \end{proof}
 
 The key point of Lemma \ref{trnsfr-PSS} is that \eqref{PSW} transfers exponential sums from $\N^k_{(c)}$ to $\N^k$, up to error. For $\omega<P(c,k)$ the $o(1)$ can be dropped, though this is immaterial here. The dyadic form (Lemma \ref{trnsfr-PSS2}), needed for Theorem \ref{main-PSS1}, gives extra cancellation for small $\omega$.

 \begin{lem}\label{trnsfr-PSS2}
  Suppose \eqref{PSW} holds, and let $\omega \in \R$. Then, for any $C \geq 1$ and uniformly for $\alpha \in [0,1)$,
  \[ \sum_{x/C \leq n\leq x} n^{\omega}\,\frac{\mathbbm{1}_{\N^k_{(c)}}(n)}{n^{1/ck}}\,e(n\alpha) = \frac{1}{c}\sum_{x/C\leq n\leq x} n^{\omega}\,\frac{\mathbbm{1}_{\N^k}(n)}{n^{1/k}}\,e(n\alpha) + O_C(x^{\omega-P(c,k)}). \]
 \end{lem}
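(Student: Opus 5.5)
We prove Lemma \ref{trnsfr-PSS2} by partial summation on the interval $[x/C,x]$ alone, so that the error from \eqref{PSW} is never integrated down to $1$. Write $P=P(c,k)$ and reuse the notation from the proof of Lemma \ref{trnsfr-PSS}:
\[ g(\alpha;t)=\sum_{n\leq t}\mathbbm{1}_{\N^k_{(c)}}(n)e(n\alpha),\qquad G(\alpha;t)=\frac1c\sum_{n\leq t}n^{(\frac1c-1)\frac1k}\mathbbm{1}_{\N^k}(n)e(n\alpha), \]
and $R(\alpha;t)=g(\alpha;t)-G(\alpha;t)$. By \eqref{PSW}, $R(\alpha;t)\ll t^{\frac{1}{ck}-P}$ uniformly in $\alpha\in[0,1)$ and $t\geq 1$.

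The key observation is that the difference of the two sides equals $\sum_{x/C\leq n\leq x} n^{\omega-\frac{1}{ck}}\,\Delta R(n)$. Here $\Delta R(n)=R(\alpha;n)-R(\alpha;n-1)$, so that
\[ \Delta R(n)=\Big(\mathbbm{1}_{\N^k_{(c)}}(n)-\tfrac1c n^{(\frac1c-1)\frac1k}\mathbbm{1}_{\N^k}(n)\Big)e(n\alpha). \]
This holds because $n^{\omega}\cdot n^{-1/ck}\cdot n^{(\frac1c-1)\frac1k}=n^{\omega-1/k}$, which is exactly the weight appearing on the right-hand side. Setting $y:=\lceil x/C\rceil-1$, Abel summation over $y<n\leq x$ gives
\[ x^{\omega-\frac{1}{ck}}R(\alpha;x)-y^{\omega-\frac{1}{ck}}R(\alpha;y)-\big(\omega-\tfrac{1}{ck}\big)\int_y^x t^{\omega-\frac{1}{ck}-1}R(\alpha;t)\,\mathrm{d}t. \]
For the boundary term at $y$ we may assume $x\geq 2C$, so that $y\geq 1$; small $x$ are trivial.

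It remains to bound the three terms, using $y\asymp_C x$ throughout. The two boundary terms are each $\ll_C x^{\omega-\frac{1}{ck}}\cdot x^{\frac{1}{ck}-P}=x^{\omega-P}$. In the integral the integrand is $\ll t^{\omega-P-1}$, and the interval $[y,x]$ has length $\ll x$, so the integral is $\ll_C x^{\omega-P-1}\cdot x=x^{\omega-P}$. This bound holds whatever the sign of $\omega-P$, since $t\asymp_C x$ on the whole range. This is precisely the extra cancellation over Lemma \ref{trnsfr-PSS}: there the integral started at $1$, producing the $\max\{\cdot,0\}$ and the $x^{o(1)}$.

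There is essentially no obstacle here; the only care needed is bookkeeping. We must handle the lower endpoint correctly by starting the summation at $y=\lceil x/C\rceil-1$, so that the sum runs over $n\geq x/C$. We must also allow implied constants to depend on $C$ through factors like $(x/C)^{\omega-P}$, which is why the error term is stated as $O_C$.
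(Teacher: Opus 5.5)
Your proposal is correct and is essentially the paper's proof. Like the paper, you use Abel summation over $y<n\leq x$ with $y=\lceil x/C\rceil-1$ and bound the two boundary terms and the integral by $O_C(x^{\omega-P})$ using $t\asymp_C x$ on $[y,x]$; your remark about assuming $x\geq 2C$ so that $y\geq 1$ is harmless bookkeeping that the paper leaves implicit.
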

 \begin{proof}
  The proof is similar to Lemma \ref{trnsfr-PSS}. Writing $P=P(c,k)$, $g$ and $G$ as before, $R(\alpha;x) := g(\alpha;x)-G(\alpha;x)\ll x^{\frac{1}{ck}-P}$ uniformly for $\alpha\in[0,1)$ by \eqref{PSW}. Put $y=\lceil x/C\rceil -1$. By partial summation,
  \begin{align*}
   &\sum_{y<n\leq x} n^{\omega}\,\frac{\mathbbm{1}_{\N^k_{(c)}}(n)}{n^{1/ck}}\,e(n\alpha)-\frac{1}{c}\sum_{y<n\leq x} n^{\omega}\,\frac{\mathbbm{1}_{\N^k}(n)}{n^{1/k}}\,e(n\alpha) \\
   &\hspace{+10em}= x^{\omega-\frac{1}{ck}} R(\alpha;x) - y^{\omega-\frac{1}{ck}} R(\alpha;y) - (\omega-\tfrac{1}{ck}) \int_y^x t^{\omega-\frac{1}{ck}-1}R(\alpha;t)\,\mathrm{d}t \\
   &\hspace{+10em}\ll_C x^{\omega-P}+\int_{\lceil x/C\rceil -1}^x t^{\omega-P-1}\,\mathrm{d}t \ll_C x^{\omega-P}. \qedhere
  \end{align*}
 \end{proof}
 
 Finally, we need an asymptotic for weighted solution counts over $\N^k$.
  
 \begin{thm}[{\cite[Theorem 4.2]{tafWWG}}]\label{wolem}
  Let $H_0=H_0(k)$ be as in \eqref{h0k1}, and let $h\geq 2H_0+1$. Let $\delta>0$ be any real number with
  \[ \delta < \frac{h-2H_0}{2h(h-1)H_0}. \]
  Then, for any $\omega \geq 1/h - \delta$,
  \[ \sum_{\substack{x_1,\ldots,x_h\in \N^k \\ x_1+\cdots+x_h = N}} (x_1\cdots x_h)^{\omega - \frac{1}{k}} = \mathfrak{S}_{k,h}(N)\frac{1}{k^h}\frac{\Gamma(\omega)^h}{\Gamma(h\omega)} N^{h\omega - 1} + O(N^{h\omega -1 -\tau}) \]
  for some $\tau = \tau(k,h,\delta) >0$, where the singular series $\mathfrak{S}_{k,h}(N)$ is defined as in \eqref{singser}.
 \end{thm}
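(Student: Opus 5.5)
The plan is to run the same circle-method argument used above for Theorem \ref{MTcest}, with the single central major arc replaced by the classical Waring major arcs around $a/q$. These arcs produce the singular series $\mathfrak{S}_{k,h}(N)$. Put
\[ T(\alpha;x)=\sum_{n\leq x} n^{\omega}\frac{\mathbbm{1}_{\N^k}(n)}{n^{1/k}}e(n\alpha), \]
let $T^{\sharp}$ be the same sum restricted to $x/h\leq n\leq x$, and start from the decomposition \eqref{pss-expand} with these $T,T^\sharp$. Write $\omega_0=1/h-\delta$, fix a small $\nu>0$ depending on $\omega_0$ and $k$, and set $L=N^\nu$. The major arcs are
\[ \mathfrak{M}=\bigcup_{q\leq L}\ \bigcup_{(a,q)=1}\Big\{\alpha:\ |\alpha-a/q|\leq L/(qN)\Big\}, \]
and the minor arcs are $\mathfrak m=[0,1]\setminus\mathfrak M$.

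\emph{Major arcs.} First I will take the standard approximation for $\alpha=a/q+\beta\in\mathfrak M$ (Vaughan, Theorem 4.1):
\[ \sum_{m\leq t^{1/k}}e(\alpha m^k)=q^{-1}S(q,a)\,v(\beta;t)+O\big(q(1+t|\beta|)\big). \]
Feeding this into partial summation with weight $n^{\omega-1/k}$, as in Lemma \ref{pss-major-approx}, should give
\[ T(\alpha;N)=q^{-1}S(q,a)\,\tfrac1k\,U(\beta;N)+O(N^{\omega-2\nu}), \]
and likewise for $T^\sharp$ with $U^\sharp$, for $\nu$ small. Substituting, the $j$-sum factorises as a truncated singular series $\sum_{q\leq L}\sum_a q^{-h}S(q,a)^he(-Na/q)$ times the $\beta$-integrals of $U^{\sharp\,j}U^{h-j}$ over $|\beta|\leq L/(qN)$.

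\emph{Completing the major-arc terms.} Two completions are needed.
\begin{enumerate}[label=\textnormal{(\roman*)}]
\item Extend the $\beta$-integrals to $[-1/2,1/2]$ using the bounds $|U|\ll N^{\omega-1}\|\beta\|^{-1}+\|\beta\|^{-\omega}$ and $|U^\sharp|\ll N^{\omega-1}\|\beta\|^{-1}$ quoted in the proof of Proposition \ref{major-pss}. This loses only $(N^{-1}qL^{-1})$-type powers. After undoing the $j$-expansion, the completed integral equals $\sum_{n_1+\cdots+n_h=N}(n_1\cdots n_h)^{\omega-1}=\frac{\Gamma(\omega)^h}{\Gamma(h\omega)}N^{h\omega-1}+O(N^{h\omega-1-\eta})$.
\item Complete the singular series using $S(q,a)\ll q^{1-1/2^{k-1}}$, or $q^{1-1/k}$ via Hua's bound. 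Since $h\geq 2H_0+1\geq 5$, the tail is $\ll L^{-\eta}$, and $\mathfrak{S}_{k,h}(N)\ll 1$.
\end{enumerate}
This yields the main term $\mathfrak{S}_{k,h}(N)k^{-h}\Gamma(\omega)^h\Gamma(h\omega)^{-1}N^{h\omega-1}$.

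\emph{Minor arcs.} Here I will copy the proof of Proposition \ref{minor-pss}, with Lemma \ref{whuaPPow} playing the role of Lemma \ref{pss-weighted-mv}. The sup bound comes from Weyl's inequality, or Vinogradov-type bounds, combined with Dirichlet approximation: for $\alpha\in\mathfrak m$ this gives $\sum_{m\leq P}e(\alpha m^k)\ll P^{1-\sigma}$ for some $\sigma=\sigma(k,\nu)>0$. Partial summation over the dyadic range then gives $\sup_{\mathfrak m}|T^\sharp|\ll N^{\omega-\lambda}$.
\begin{enumerate}[label=\textnormal{(\roman*)}]
\item For $\omega\geq 1/h$, apply Hölder with exponents $h$, using $\int|T^\sharp|^{2H_0}\ll N^{2H_0\omega-1+o(1)}$ and $\int|T|^h\ll N^{h\omega-1+o(1)}$.
\item For $1/h-\delta\leq\omega<1/h$, use Wooley's Hölder splitting \eqref{pss-holder2} with $\theta=1-(h-1)\omega$. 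Since $1/\omega>2H_0$, Lemma \ref{whuaPPow} gives $\Upsilon_2,\Upsilon_3\ll N^{o(1)}$. The saving is $\lambda(1-2H_0\theta)$, which is positive exactly under the stated bound on $\delta$.
\end{enumerate}

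The step I expect to be most delicate is the major-arc approximation when $\omega<1/k$. There the weighted error $O(q(1+N|\beta|))$ must still be beaten by $N^{\omega-2\nu}$ uniformly for $q\leq L$, and the measure of $\mathfrak M$ now carries an extra factor of order $L^2$. I would handle this by choosing $\nu$ small relative to $\omega_0$ and to the savings, and if necessary by pruning the arcs to $q\leq L$, $|\beta|\leq L/N$, with a separate treatment (Vaughan-type pruning) of the intermediate range.
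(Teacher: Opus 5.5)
Your proposal is correct and is essentially the argument behind the cited result. The paper does not reprove it but imports it from \cite{tafWWG}, and its proof of Theorem \ref{MTcest} is the single-arc version of the template you adapt: the $T^\sharp$ decomposition, completion of the singular integral via \cite[Lemma 4.6]{tafWWG}, and Wooley's H\"older splitting on the minor arcs.

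The step you flag as delicate is harmless, and no pruning is needed. Partial summation gives a major-arc error of $O(N^{\nu}(1+N^{\omega-1/k}))$, which is smaller than the $O(N^{\omega-2\nu})$ you wrote; so taking $\nu<\frac14\min\{\omega_0,1/k\}$ beats the $L^2/N$ measure of $\mathfrak M$ by a power of $N$.

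One small correction: "$2H_0+1\geq 5$" is false for $k=1$. It does not matter, because the singular-series tail only needs $h>2k$ together with $S(q,a)\ll q^{1-1/k}$, and $h>2k$ always holds here.
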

 
 We are now ready to prove the main results of this section.
 
\subsection{Proof of Theorem \ref{main-PSS1}}
 Write $P=P(c,k)$, and assume
 \[ h\geq \max\{2H_0+1,\, P^{-1} + 1\}, \]
 Let $\omega \geq 1/h - \delta$, and define the exponential sums
 \[ T(\alpha;x) := \sum_{n\leq x} n^{\omega}\,\frac{\mathbbm{1}_{\N^k_{(c)}}(n)}{n^{1/ck}}\,e(n\alpha), \qquad T^{\sharp}(\alpha;x) := \sum_{x/h\leq n\leq x} n^{\omega}\,\frac{\mathbbm{1}_{\N^k_{(c)}}(n)}{n^{1/ck}}\,e(n\alpha). \]
 Since every solution of $x_1+\cdots+x_h = N$ has at least one variable $\geq N/h$, we have
 \[ \int_{0}^{1} (T(\alpha;N)-T^{\sharp}(\alpha;N))^{h}\, e(-N\alpha)\,\mathrm{d}\alpha = 0. \]
 Thus
 \begin{align}
  \sum_{\substack{x_1,\ldots,x_h\in \N^{k}_{(c)} \\ x_1+\cdots+x_h = N}} (x_1\cdots x_h)^{\omega-\frac{1}{ck}} &= \int_{0}^{1} \big(T(\alpha;N)^h - (T(\alpha;N)-T^{\sharp}(\alpha;N))^{h}\big)\, e(-N\alpha)\,\mathrm{d}\alpha \nonumber \\
  &= \sum_{j=1}^{h} (-1)^{j+1}\binom{h}{j} \int_{0}^{1} T^{\sharp}(\alpha;N)^j T(\alpha;N)^{h-j}\,e(-N\alpha)\,\mathrm{d}\alpha. \label{TsharpT}
 \end{align}
 
 By Lemmas \ref{trnsfr-PSS} and \ref{trnsfr-PSS2},
 \begin{equation*}
  T(\alpha;x) = \frac{1}{c}\, U(\alpha;x) + O(E(x)),\qquad T^{\sharp}(\alpha;x) = \frac{1}{c} \,U^{\sharp}(\alpha;x) + O(E^{\sharp}(x)) %\label{approxUT}
 \end{equation*}
 where
 \[ U(\alpha;x) := \sum_{n\leq x} n^{\omega}\,\frac{\mathbbm{1}_{\N^k}(n)}{n^{1/k}}\,e(n\alpha), \qquad U^{\sharp}(\alpha;x) := \sum_{x/h\leq n\leq x} n^{\omega}\,\frac{\mathbbm{1}_{\N^k}(n)}{n^{1/k}}\,e(n\alpha), \]
 \[ E(x) = x^{\max\{\omega-P,\,0\}+o(1)}, \qquad E^{\sharp}(x) = x^{\omega-P+o(1)}. \]
 Plugging these into \eqref{TsharpT}, each term becomes
 \begin{equation}
  \begin{aligned}
   \int_{0}^{1}\,&T^{\sharp}(\alpha;N)^j T(\alpha;N)^{h-j}\,e(-N\alpha)\,\mathrm{d}\alpha \\[-0.5em]
   &= \frac{1}{c^h} \int_{0}^{1} U^{\sharp}(\alpha;N)^j U(\alpha;N)^{h-j}\,e(-N\alpha)\,\mathrm{d}\alpha + O\Bigg( \underset{(r,s)\neq(0,0)}{\sum_{r=0}^{j}\sum_{s=0}^{h-j}} \lVert U^{\sharp}\rVert_{h-s}^{j-r} \lVert U\rVert_h^{h-j-s} (E^{\sharp})^r E^s \Bigg),
  \end{aligned} \label{TapproxU}
 \end{equation} 
 where H\"older's inequality upgrades the norms in the error term.
 
 Lemma \ref{whuaPPow} gives
 \[ \lVert U\rVert_h \ll N^{\max\{\omega - \frac{1}{h},\,0\}+o(1)},\qquad \lVert U^{\sharp}\rVert_{h-s} \ll N^{\omega - \frac{1}{(h-s)^{*}}+o(1)}, \]
 where $(h-s)^{*}:= \max\{h-s,2H_0\}$. Since $\omega\geq 1/h-\delta$, we have $\max\{\omega-\frac{1}{h},0\}\leq \omega-\frac{1}{h}+\delta$. Thus the terms in the error in \eqref{TapproxU} are
 \begin{align*}
  &\ll N^{ \left((j-r)\omega - \frac{j-r}{(h-s)^{*}}\right) \,+\, (h-j-s)\left(\omega-\frac{1}{h}+\delta\right) \,+\, r(\omega-P) \,+\, s\max\{\omega-P,\,0\} \,+\, o(1) } \\
  &= N^{h\omega - 1 - \left(\frac{j-r}{(h-s)^{*}} \,-\, \frac{j+s}{h} \,+\, rP \,+\, s\min\{P,\,\omega\} \,-\, (h-j-s)\delta\right) \,+\, o(1)}.
 \end{align*}
 Since $h\geq P^{-1}+1$ and $\omega\geq 1/h-\delta$, we have $\min\{P,\omega\}\geq 1/h - \delta$, so this is
 \[ \ll N^{h\omega - 1 - \left((j-r)(\frac{1}{(h-s)^{*}} - \frac{1}{h}) \,+\, r(P-\frac{1}{h}) \,-\, (h-j)\delta \right) \,+\, o(1) }. \]
 For $h \geq \max\{2H_0+1,\, P^{-1}+1\}$, the term $r(P-\frac{1}{h})$ is positive whenever $r\geq 1$, while if $r=0$ we still have positivity from $(j-r)(\frac{1}{(h-s)^{*}} - \frac{1}{h})$ since then $s\geq 1$. In either case the positive part is at least $\frac{1}{h(h-1)}$; since $h-j\leq h-1$ and $\delta<\frac{1}{h(h-1)^2}$ by definition, every term in the error in \eqref{TapproxU} is $O(N^{h\omega-1-\eta})$ for some $\eta = \eta(h,\delta)>0$.
 
 Substituting \eqref{TapproxU} into \eqref{TsharpT} gives
 \begin{align*}
  \sum_{\substack{x_1,\ldots,x_h\in \N^{k}_{(c)} \\ x_1+\cdots+x_h = N}} &(x_1\cdots x_h)^{\omega-\frac{1}{ck}} \\
  &= \frac{1}{c^h} \sum_{j=1}^{h} (-1)^{j+1}\binom{h}{j} \int_{0}^{1} U^{\sharp}(\alpha;N)^j U(\alpha;N)^{h-j}\,e(-N\alpha)\,\mathrm{d}\alpha + O(N^{h\omega-1-\eta}) \\
  &= \frac{1}{c^h} \sum_{\substack{x_1,\ldots,x_h\in \N^{k} \\ x_1+\cdots+x_h = N}} (x_1\cdots x_h)^{\omega-\frac{1}{k}} + O(N^{h\omega-1-\eta}).
 \end{align*}
 Since $\omega\geq 1/h - \delta$ and $0< \delta < \frac{1}{h(h-1)^2} \leq \frac{h-2H_0}{2h(h-1)H_0}$, Theorem \ref{wolem} applies, completing the proof (taking $\nu = \min\{\tau,\eta\}$).\hfill$\square$
 
%%%%%%%%%%%%%%%%%
\subsection{Proof of Theorem \ref{MTPSSkappa}}
 The set $\N^{k}_{(c)}$ has regularly varying counting function, since we have $|\N^{k}_{(c)}\cap[1,x]|=|\N_{(c)}\cap[1,x^{1/k}]|=x^{1/ck}+O(1)$. By Proposition \ref{PSShua}, it also satisfies \eqref{hua-type} with $H_0(\N^{k}_{(c)})=\max\{H_0(k),\,\lceil\tfrac{1}{2} P^{-1}\rceil\}$. Theorem \ref{main-PSS1} gives \eqref{mainest} with $\mathscr{S}=\N$ for $h\geq \max\{2H_0(k)+1,\, P^{-1} + 1\}$ (since $\mathfrak{S}_{k,h}(N) \asymp 1$ by \cite[Lemma 4.5]{tafWWG}). Hence, taking $h\geq \max\{2H_0(k)+1,\,2\lceil\tfrac{1}{2} P^{-1}\rceil+1\}$, Theorem \ref{main-tech} yields Theorem \ref{MTPSSkappa}. \hfill$\square$

%%%%%%%%%%%%%%%%%%%%%%%%%%%%%
\section{Piatetski-Shapiro prime powers}\label{sec-pspp}
 In this section we prove Theorem \ref{MTPSPP}. The arguments parallel those of Section \ref{sec-psw}, but here we work with $\P_{(c)}^k$. We begin with a Hua-type bound for $\P^k$. 
 
 \begin{lem}[{\cite[Lemma 5.1]{tafWWG}}]\label{PPhua}
  Let $k\geq 1$, and let $H_0 = H_0(k)$ be as in \eqref{h0k1}. Then, for every integer $h\geq H_0$,
  \[ \sum_{n\leq x} r_{\P^k,h}(n)^2 \ll x^{\frac{2h}{k} -1}(\log x)^{M} \]
  for some constant $M=M(k)>0$. When $k\geq 5$, the factor $(\log x)^M$ can be dropped.
 \end{lem}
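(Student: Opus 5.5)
The plan is to reduce Lemma \ref{PPhua} to a mean value estimate for the prime exponential sum and then to Hua's inequality for $k$-th powers. By orthogonality and Parseval,
\[ \sum_{n\leq x} r_{\P^k,h}(n)^2 \leq \int_0^1 \bigg|\sum_{p\leq x^{1/k}} e(\alpha p^k)\bigg|^{2h}\,\mathrm{d}\alpha , \]
and the right-hand side counts solutions of $p_1^k+\cdots+p_h^k = q_1^k+\cdots+q_h^k$ with all primes at most $P:=x^{1/k}$. Every prime is a positive integer, so I will drop the primality condition and bound this count by the number of integer solutions with $m_i,n_i\leq P$. That number equals $\int_0^1 |\sum_{m\leq P} e(\alpha m^k)|^{2h}\,\mathrm{d}\alpha$, which is exactly the quantity controlled in \eqref{Phua}.

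Next I apply the bound \eqref{Phua}, namely Wooley's \cite[Corollary 14.7]{woo19}, which gives the Hua-type estimate $\ll P^{2h-k+o(1)} = x^{2h/k-1+o(1)}$ for all $h\geq H_0(k)$. Weakened to allow $x^{o(1)}$, this already yields the form in which the lemma is used in Theorem \ref{main-tech}, since \eqref{hua-type} only requires $x^{o(1)}$ losses.

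To get the sharper $(\log x)^M$ for small $k$, I will invoke the classical results directly. For $1\leq k\leq 4$ we have $H_0=2^{k-1}$, and Hua's lemma gives $\int_0^1|\sum_{m\leq P}e(\alpha m^k)|^{2^k}\,\mathrm{d}\alpha\ll P^{2^k-k+\eps}$. The $\eps$-loss comes from divisor-function bounds in the Weyl differencing, and when one restricts to primes it can be replaced by a power of $\log P$ (this is the form in Hua's book on additive prime number theory). The case $k=1$, with $h\geq 1$, is trivial: the count is $\pi(x)\ll x/\log x$. For larger $h\geq H_0$, I multiply by trivial bounds $|\sum_{p\leq P}e(\alpha p^k)|\leq \pi(P)\ll P$, which preserves the exponent $2h/k-1$.

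For $k\geq 5$, $H_0=\frac12k(k-1)+\lfloor\sqrt{2k+2}\rfloor$ is the range in which efficient congruencing or decoupling (Vinogradov mean value theorem, \cite{woo19}) gives the $\eps$-free estimate $\ll P^{2h-k}$ via the known major-arc and minor-arc analysis. Dropping primality and citing that result removes the log factor.

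I expect the main obstacle to be exactly this log-versus-$\eps$ issue. Once $\eps$-losses are tolerated, the argument is immediate, so the substantive part is checking that the literature supplies the endpoint bound at $h=H_0$ without a $P^\eps$ loss. Since the statement is quoted from \cite[Lemma 5.1]{tafWWG}, in practice I would cite that lemma and only record the reduction above.
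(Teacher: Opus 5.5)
Your proposal is correct. The paper gives no proof of this lemma; it is imported verbatim from \cite[Lemma 5.1]{tafWWG}, so there is no in-paper argument to compare with. Your reduction is the natural one, and it explains the shape of the statement. Parseval plus positivity shows that the $2h$-th moment of the prime sum counts a subset of the integer solutions of $m_1^k+\cdots+m_h^k=n_1^k+\cdots+n_h^k$ with $m_i,n_i\leq x^{1/k}$. The trivial bound then takes you from $h=H_0$ to $h>H_0$. At $2H_0(k)=2^k$ for $k\leq 4$ you use Hua's lemma, and for $k\geq 5$ the Vinogradov-mean-value Hua-type bound. This is why a $(\log x)^M$ appears only for small $k$.

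Two corrections:

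\begin{itemize}
\item Your explanation of the logarithmic form of Hua's lemma is wrong. Primality plays no role there, since Weyl differencing does not preserve primes. The $(\log P)^{c}$ version is a statement about the integer Weyl sum $\sum_{m\leq P}e(\alpha m^k)$. Since you have already discarded primality, that integer version is exactly what you need, so your chain of inequalities is fine once the citation is read this way.

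\item For the log-free clause when $k\geq 5$, you need the integer bound in the $\varepsilon$-free form $\ll P^{2s-k}$ at the endpoint $s=H_0(k)$. The bound \eqref{Phua} as recorded in this paper only gives $x^{o(1)}$, so it cannot serve as the input for that clause; the sharper form must be taken from \cite{woo19}. You correctly identify this as the only substantive point. For the way the lemma is used later, via \eqref{hua-type} and Lemma \ref{whuaPP}, the $x^{o(1)}$ version would already suffice.
\end{itemize}
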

  
 Next, we extend this estimate to weighted sums. While $x^{o(1)}$ bounds would suffice for our purposes, it is often useful in the $\P^k$ setting to make logarithmic losses explicit, so we state results in this form.
 
 \begin{lem}\label{whuaPP}
  Let $H_0 = H_0(k)$ and $M$ be as in Lemma \ref{PPhua}. For every $\ell \geq 1$, we have
  \[ \int_0^1 \bigg| \sum_{n \leq x} n^{\omega}\, \frac{\mathbbm{1}_{\P^k}(n)}{n^{1/k}} \,(\log n) \,e(n\alpha) \bigg|^{\ell} \, \mathrm{d}\alpha \ll x^{\max\{\ell\omega - \frac{\ell}{\ell^{*}},\,0\}} (\log x)^{\ell + 2H_0 + M}, \]
  where $\ell^{*} := \max\{\ell,2H_0\}$. Moreover, for any $C\geq 1$,
  \[ \int_0^1 \bigg| \sum_{x/C \leq n \leq x} n^{\omega}\, \frac{\mathbbm{1}_{\P^k}(n)}{n^{1/k}} \,(\log n) \,e(n\alpha) \bigg|^{\ell} \, \mathrm{d}\alpha \ll_C x^{\ell\omega - \frac{\ell}{\ell^{*}}} (\log x)^{2H_0 + M}. \]
 \end{lem}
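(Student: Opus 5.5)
The plan is to mirror the proof of Lemma \ref{whuaPPow}, now with the prime-power exponential sum and with the logarithmic losses tracked explicitly instead of absorbed into $x^{o(1)}$. Put
\[ g(\alpha;t):=\sum_{n\leq t}\mathbbm{1}_{\P^k}(n)\,(\log n)\,e(n\alpha). \]
The first step is to bound $\|g(\cdot;t)\|_\ell$ for $1\le t\le x$. By Parseval, $\|g(\cdot;t)\|_{2H_0}^{2H_0}$ is a weighted count of solutions of $x_1+\cdots+x_{H_0}=y_1+\cdots+y_{H_0}$ in $\P^k\cap[1,t]$. Each weight is a product of $2H_0$ logarithms, so
\[ \|g(\cdot;t)\|_{2H_0}^{2H_0}\leq (\log t)^{2H_0}\sum_{n\leq H_0t} r_{\P^k,H_0}(n)^2 \ll t^{\frac{2H_0}{k}-1}(\log t)^{2H_0+M}, \]
using Lemma \ref{PPhua} with $h=H_0$.

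We then split on the size of $\ell$.
\begin{itemize}
\item If $\ell\leq 2H_0$, monotonicity of $L^p$ norms on $[0,1]$ gives $\|g\|_\ell\le\|g\|_{2H_0}$.
\item If $\ell>2H_0$, we use the trivial bound $\|g(\cdot;t)\|_\infty\ll t^{1/k}$, which holds because $\sum_{p^k\le t}\log p^k\ll t^{1/k}$ by Chebyshev. This gives $\|g\|_\ell^\ell\ll t^{\ell/k-1}(\log t)^{2H_0+M}$.
\end{itemize}
In both cases
\[ \|g(\cdot;t)\|_\ell\ll t^{\frac1k-\frac1{\ell^*}}(\log t)^{(2H_0+M)/\ell^*}. \]
The log exponent here is at most $(2H_0+M)/\ell$, and it can be replaced by that for all $t\le x$.

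Next, partial summation gives
\[ \sum_{n\le x} n^{\omega-1/k}\mathbbm{1}_{\P^k}(n)(\log n)e(n\alpha)=x^{\omega-\frac1k}g(\alpha;x)-(\omega-\tfrac1k)\int_1^x t^{\omega-\frac1k-1}g(\alpha;t)\,\mathrm dt. \]
By Minkowski's inequality, the $L^\ell$ norm of the left side is
\[ \ll x^{\omega-\frac1{\ell^*}}L+\int_1^x t^{\omega-\frac1{\ell^*}-1}L\,\mathrm dt,\qquad L:=(\log x)^{(2H_0+M)/\ell}. \]
We bound the integral in three cases.
\begin{itemize}
\item If $\omega>1/\ell^*$, it is $\ll x^{\omega-1/\ell^*}L$.
\item If $\omega=1/\ell^*$, it is $\ll L\log x$.
\item If $\omega<1/\ell^*$, it is $\ll L$.
\end{itemize}
Raising to the $\ell$-th power gives $x^{\max\{\ell\omega-\ell/\ell^*,0\}}(\log x)^{2H_0+M+\ell}$. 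The extra $(\log x)^\ell$ is exactly the loss in the boundary case $\omega=1/\ell^*$, and this explains the exponent $\ell+2H_0+M$ in the statement.

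For the dyadic version, set $y=\lceil x/C\rceil-1$. Partial summation now has two boundary terms, $x^{\omega-1/k}g(\alpha;x)$ and $y^{\omega-1/k}g(\alpha;y)$, plus an integral over $[y,x]$. Since $t\asymp_C x$ on that range, the integral is $\ll_C x^{\omega-1/\ell^*}L$ with no additional logarithm. Raising to the $\ell$-th power gives $(\log x)^{2H_0+M}$.

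The main obstacle is the bookkeeping of the logarithmic weights. The $\log n$ factors must be absorbed into the Hua count via a uniform $(\log t)^{2H_0}$ bound, and the $\log x$ that appears only at $\omega=1/\ell^*$ must be kept separate from the rest. The other point to check is that the constants in $\ll$ stay uniform in $\omega$; it is enough to have this for $\omega$ in a fixed compact range.
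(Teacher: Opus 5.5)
Your proposal is correct and follows the paper's proof essentially step for step: the same log-weighted $2H_0$-th moment bound via Lemma \ref{PPhua}, the same monotonicity/interpolation split according to $\ell^{*}=\max\{\ell,2H_0\}$, and the same partial summation with three cases comparing $\omega$ with $1/\ell^{*}$. As in the paper, the boundary case $\omega=1/\ell^{*}$ is what produces the extra $(\log x)^{\ell}$. The only difference is cosmetic: you replace $(\log t)^{(2H_0+M)/\ell^{*}}$ by $(\log x)^{(2H_0+M)/\ell}$ at the outset, whereas the paper keeps $\ell^{*}$ and uses $\ell/\ell^{*}\leq 1$ after raising to the $\ell$-th power.
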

 \begin{proof}
  Define $g(\alpha;x) := \sum_{n \leq x} \mathbbm{1}_{\P^k}(n)\,(\log n)\,e(n\alpha)$. We first claim that, uniformly for $2\leq t\leq x$,
  \[ \|g(\cdot;t)\|_{\ell}\ll t^{\frac{1}{k}-\frac{1}{\ell^*}}(\log t)^{\frac{2H_0+M}{\ell^*}}. \]
  Indeed, expanding the $2H_0$-th moment and using orthogonality, the integral counts solutions to
  \[ n_1+\cdots+n_{H_0}=m_1+\cdots+m_{H_0}, \qquad n_i,m_i\in \P^k\cap[1,t], \]
  with weight
  \[ (\log n_1)\cdots(\log n_{H_0})(\log m_1)\cdots(\log m_{H_0})\leq(\log t)^{2H_0}. \]
  Therefore, by Lemma \ref{PPhua},
  \[ \int_0^1 |g(\alpha;t)|^{2H_0}\,\mathrm{d}\alpha \leq (\log t)^{2H_0}\sum_{n\leq H_0t}r_{\P^k,H_0}(n)^2 \ll t^{\frac{2H_0}{k}-1}(\log t)^{2H_0+M}. \]
  Also, $\|g(\cdot;t)\|_{\infty}\leq \sum_{n\leq t}\mathbbm{1}_{\P^k}(n)\log n \ll t^{1/k}$. So if $\ell\leq 2H_0$, then $\ell^*=2H_0$, and the claim follows from the monotonicity of $L^p$-norms. If $\ell>2H_0$, then $\ell^{*} = \ell$, and
  \[ \|g(\cdot;t)\|_{\ell}^{\ell}\leq \|g(\cdot;t)\|_{\infty}^{\ell-2H_0}\|g(\cdot;t)\|_{2H_0}^{2H_0}\ll t^{\frac{\ell}{k}-1}(\log t)^{2H_0+M}, \]
  which proves the claim.

  By partial summation,
  \[ \sum_{n \leq x} n^{\omega}\,\frac{\mathbbm{1}_{\P^k}(n)}{n^{1/k}}\,(\log n)\,e(n\alpha) = x^{\omega-\frac{1}{k}} g(\alpha;x)-(\omega-\tfrac{1}{k})\int_1^x t^{\omega-\frac{1}{k}-1}g(\alpha;t)\,\mathrm{d}t. \]
  Hence, by the triangle inequality,
  \begin{align*}
   \bigg\|\sum_{n \leq x} n^{\omega}\,\frac{\mathbbm{1}_{\P^k}(n)}{n^{1/k}}\,(\log n)\,e(n\alpha)\bigg\|_{\ell}
   &\ll x^{\omega-\frac{1}{k}}\|g(\cdot;x)\|_{\ell}+\int_1^x t^{\omega-\frac{1}{k}-1}\|g(\cdot;t)\|_{\ell}\,\mathrm{d}t \\
   &\ll x^{\omega-\frac{1}{\ell^*}}(\log x)^{\frac{2H_0+M}{\ell^*}}+\int_1^x t^{\omega-\frac{1}{\ell^*}-1}(\log t)^{\frac{2H_0+M}{\ell^*}}\,\mathrm{d}t.
  \end{align*}
  If $\omega>1/\ell^*$, the last expression is $\ll x^{\omega-\frac{1}{\ell^*}}(\log x)^{\frac{2H_0+M}{\ell^*}}$. If $\omega=1/\ell^*$, it is $\ll (\log x)^{1+\frac{2H_0+M}{\ell^*}}$. If $\omega<1/\ell^*$, it is $\ll 1$. Thus, in all cases,
  \[ \bigg\|\sum_{n \leq x} n^{\omega}\,\frac{\mathbbm{1}_{\P^k}(n)}{n^{1/k}}\,(\log n)\,e(n\alpha)\bigg\|_{\ell}\ll x^{\max\{\omega-\frac{1}{\ell^*},\,0\}}(\log x)^{1+\frac{2H_0+M}{\ell^*}}. \]
  Raising to the power $\ell$, and using $\ell/\ell^*\leq 1$, gives the result.

  For the short interval estimate, put $y:=\lceil x/C\rceil-1$. By partial summation,
  \begin{align*}
   &\sum_{x/C\leq n\leq x} n^{\omega}\,\frac{\mathbbm{1}_{\P^k}(n)}{n^{1/k}}\,(\log n)\,e(n\alpha) \\
   &\hspace{10em}= x^{\omega-\frac{1}{k}}g(\alpha;x) - y^{\omega-\frac{1}{k}}g(\alpha;y)-(\omega-\tfrac{1}{k})\int_y^x t^{\omega-\frac{1}{k}-1}g(\alpha;t)\,\mathrm{d}t.
  \end{align*}
  Since $y\asymp_C x$, the same argument gives
  \begin{align*}
   &\bigg\|\sum_{x/C\leq n\leq x} n^{\omega}\,\frac{\mathbbm{1}_{\P^k}(n)}{n^{1/k}}\,(\log n)\,e(n\alpha)\bigg\|_{\ell} \\
   &\hspace{+12em} \ll_C x^{\omega-\frac{1}{\ell^*}}(\log x)^{\frac{2H_0+M}{\ell^*}}+\int_{\lceil x/C\rceil-1}^{x} t^{\omega-\frac{1}{\ell^*}-1}(\log x)^{\frac{2H_0+M}{\ell^*}}\,\mathrm{d}t \\
   &\hspace{+12em} \ll_C x^{\omega-\frac{1}{\ell^*}}(\log x)^{\frac{2H_0+M}{\ell^*}}.
  \end{align*}
  Raising to the power $\ell$, and using $\ell/\ell^*\leq 1$, finishes the proof.
 \end{proof}
 
 When $\omega < 1/\ell^{*}$, the logarithmic factor in the first bound can be omitted. This minor sharpening will not be needed, since logarithmic losses are harmless for our purposes.
 
 We now obtain a Hua-type estimate for $\P^k_{(c)}$.
 
 \begin{prop}[Hua-type bound for $\P^k_{(c)}$]\label{PSPhua}
  Suppose \eqref{PSWG} holds, and let $H_0 = H_0(k)$ and $M$ be as in Lemma \ref{PPhua}. Then, for every $h \geq \max\{H_0, \frac{1}{2}P^{*}(c,k)^{-1} \}$, we have
  \[ \sum_{n\leq x} r_{\P^k_{(c)},h}(n)^2 \ll x^{\frac{2h}{ck}-1}(\log x)^{2h + 2H_0 + M}. \]
 \end{prop}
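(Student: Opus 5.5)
The plan mirrors the proof of Proposition \ref{PSShua}, with the logarithmic weight inserted and then removed. First I bound $\sum_{n\leq x} r_{\P^k_{(c)},h}(n)^2$ by the $2h$-th moment of the unweighted exponential sum over $\P^k_{(c)}\cap[1,x]$, via Parseval. To use \eqref{PSWG}, which carries the weight $\log n$, I pass to the weighted sum
\[ g_c(\alpha;x):=\sum_{n\leq x}\mathbbm{1}_{\P^k_{(c)}}(n)(\log n)\,e(n\alpha). \]
By orthogonality, $\int_0^1|g_c(\alpha;x)|^{2h}\,\mathrm{d}\alpha$ counts solutions of $n_1+\cdots+n_h=m_1+\cdots+m_h$ with $n_i,m_i\in\P^k_{(c)}\cap[1,x]$, each weighted by $\prod\log n_i\prod\log m_i$. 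These weights are at least $(\log 2)^{2h}$, which bounds the unweighted moment by a constant times the weighted one. Since the target bound has a factor $(\log x)^{2h}$ to spare, I can also drop the small elements where $\log n_i$ is not $\gg\log x$ by treating them crudely, so I will not fuss over this.

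Next I apply \eqref{PSWG} to write $g_c(\alpha;x)=G(\alpha;x)+O(x^{1/ck-P^*})$ uniformly in $\alpha$, where
\[ G(\alpha;x):=\frac{1}{c}\sum_{n\leq x}n^{(\frac1c-1)\frac1k}\mathbbm{1}_{\P^k}(n)(\log n)\,e(n\alpha). \]
Using $|a+b|^{2h}\ll|a|^{2h}+|b|^{2h}$ then gives
\[ \int_0^1|g_c|^{2h}\,\mathrm{d}\alpha\ll\int_0^1|G|^{2h}\,\mathrm{d}\alpha+x^{\frac{2h}{ck}-2hP^*}. \]
The weight $n^{(\frac1c-1)\frac1k}$ equals $n^{\omega}n^{-1/k}$ with $\omega=1/ck$. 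So Lemma \ref{whuaPP} with $\ell=2h$ and $\omega=1/ck$ bounds $\int_0^1|G|^{2h}$ by $x^{\max\{2h/ck-2h/\ell^*,\,0\}}(\log x)^{2h+2H_0+M}$.

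Since $h\geq H_0$, we have $\ell^*=\max\{2h,2H_0\}=2h$, so the exponent is $\max\{2h/ck-1,0\}$. This equals $2h/ck-1$ provided $2h\geq ck$, which I expect holds in the relevant range; it must be checked against $h\geq\frac12 P^{*}(c,k)^{-1}$, noting that $P^*\leq 1/ck$ gives $2h\geq ck$. For the error term, $h\geq\frac12P^{*-1}$ gives $2hP^*\geq1$, so $x^{2h/ck-2hP^*}\leq x^{2h/ck-1}$ and it is absorbed. This yields the bound $x^{\frac{2h}{ck}-1}(\log x)^{2h+2H_0+M}$.

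The main obstacle is the bookkeeping in the first step: converting the $\log$-weighted moment back to the unweighted count $r_{\P^k_{(c)},h}$. This is only a matter of lower-bounding the weights (primes are at least $2$), so the real content lies in the transfer via \eqref{PSWG} and in Lemma \ref{whuaPP}. The logarithm exponent $2h+2H_0+M$ comes directly from the $\ell=2h$ case of that lemma.
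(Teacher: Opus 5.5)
Your proposal is correct and follows the paper's proof step by step: bound the weighted moment from below using $\log n\geq\log 2$, transfer via \eqref{PSWG} with $|a+b|^{2h}\ll|a|^{2h}+|b|^{2h}$, apply Lemma \ref{whuaPP} with $\ell=2h$ and $\omega=1/ck$, and absorb the error term using $2hP^*\geq 1$. You also check explicitly that $P^*\leq 1/ck$ forces $2h\geq ck$, so the $\max\{\cdot,0\}$ in Lemma \ref{whuaPP} is harmless; the paper leaves this implicit, and your aside about discarding small elements is unnecessary.
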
 
 \begin{proof}
  Write $g(\alpha;x) := \sum_{n\leq x}\mathbbm{1}_{\P^k_{(c)}}(n)\,(\log n)\, e(n\alpha)$. Since every element of $\P^k_{(c)}$ is at least $2^k$, the logarithmic weights are bounded below by a positive constant depending only on $k$. Hence, by orthogonality,
  \[ \sum_{n\leq x} r_{\P^k_{(c)},h}(n)^2 \ll \int_0^1 |g(\alpha;x)|^{2h}\,\mathrm{d}\alpha. \]
  By \eqref{PSWG} and the inequality $(x+y)^{2h}\ll_h x^{2h} + y^{2h}$,
  \begin{align*}
   \int_0^1 |g(\alpha;x)|^{2h}\,\mathrm{d}\alpha
   &\ll \int_0^1 \bigg|\frac{1}{c}\sum_{n\leq x} n^{(\frac{1}{c}-1)\frac{1}{k}}\,\mathbbm{1}_{\P^k}(n)\,(\log n)\,e(n\alpha)\bigg|^{2h}\,\mathrm{d}\alpha + x^{\frac{2h}{ck}-2hP^*}.
  \end{align*}
  Applying Lemma \ref{whuaPP} with $\ell=2h$ and $\omega=1/(ck)$ gives
  \[ \int_0^1 \bigg|\frac{1}{c}\sum_{n\leq x} n^{(\frac{1}{c}-1)\frac{1}{k}}\,\mathbbm{1}_{\P^k}(n)\,(\log n)\,e(n\alpha)\bigg|^{2h}\,\mathrm{d}\alpha \ll x^{\frac{2h}{ck}-1}(\log x)^{2h+2H_0+M}. \]
  Also, since $h\geq \frac{1}{2}P^*(c,k)^{-1}$, we have $x^{\frac{2h}{ck}-2hP^*}\leq x^{\frac{2h}{ck}-1}$.
  Combining these estimates completes the proof.
 \end{proof}
 
 To apply Theorem \ref{main-tech}, we make use of the following analogue of \cite[Theorem 1.2]{tafWWG} for $\P_{(c)}^k$.
 
 \begin{thm}\label{main-PSP}
  Suppose \eqref{PSWG} holds, let $H_0 = H_0(k)$ be as in \eqref{h0k1}, and let $h\geq \max\{2H_0+1, P^{*}(c,k)^{-1} + 1\}$. Let $\delta >0$ be any real number with 
  \[ \delta < \frac{1}{h(h-1)^2}. \]
  Then, for any $\omega \geq 1/h - \delta$,    
  \begin{equation*}
   \begin{aligned}
    \sum_{\substack{x_1,\ldots,x_h \in \P^k_{(c)} \\ x_1+\cdots+x_h = N}} (x_1\cdots x_h)^{\omega-\frac{1}{ck}}(\log x_1 \cdots \log x_h) 
    = \mathfrak{S}^{*}_{k,h}(N)\frac{1}{c^h}\frac{\Gamma(\omega)^h}{\Gamma(h\omega)} N^{h\omega-1} + O_R\bigg(\frac{N^{h\omega -1}}{(\log N)^{R}}\bigg)
   \end{aligned}
  \end{equation*}
  for every $R>1$, where the singular series $\mathfrak{S}^{*}_{k,h}(N)$ is defined as in \eqref{singserp}.
 \end{thm}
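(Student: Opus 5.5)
The proof will be a transfer from $\P^k_{(c)}$ to $\P^k$, parallel to the proof of Theorem \ref{main-PSS1}. We start from the prime-power analogue: \cite[Theorem 1.2]{tafWWG} gives, for $h\geq 2H_0+1$ and $\omega\geq 1/h-\delta$ in the same range of $\delta$,
\[ \sum_{\substack{x_i\in\P^k\\ \sum x_i=N}} (x_1\cdots x_h)^{\omega-\frac1k}\log x_1\cdots\log x_h = \mathfrak{S}^*_{k,h}(N)\frac{\Gamma(\omega)^h}{\Gamma(h\omega)}N^{h\omega-1}+O_R\bigg(\frac{N^{h\omega-1}}{(\log N)^R}\bigg). \]
We will take this as the $\P^k$ input, replacing Theorem \ref{wolem}. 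Note the normalisation: with weight $n^{\omega-1/k}\log n$ on $k$-th powers of primes, the factor $1/k^h$ is absorbed by the $\log p^k=k\log p$ weight. This explains why only $1/c^h$ appears in the statement. I will check this normalisation against \cite{tafWWG} first.

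Next we set up the weighted sums over $\P^k_{(c)}$. Put
\[ T(\alpha;x)=\sum_{n\leq x} n^{\omega-\frac1{ck}}\mathbbm{1}_{\P^k_{(c)}}(n)(\log n)e(n\alpha), \]
define $T^\sharp$ as the same sum restricted to $x/h\leq n\leq x$, and let $U,U^\sharp$ be the corresponding sums over $\P^k$ with weight $n^{\omega-1/k}\log n$. We then prove analogues of Lemmas \ref{trnsfr-PSS} and \ref{trnsfr-PSS2} from \eqref{PSWG}, using partial summation on the remainder $R(\alpha;t)\ll t^{1/ck-P^*}$. This gives
\[ T=\tfrac1c U+O\big(x^{\max\{\omega-P^*,0\}+o(1)}\big),\qquad T^\sharp=\tfrac1c U^\sharp+O_h\big(x^{\omega-P^*}\big), \]
uniformly in $\alpha$. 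We then expand via the identity \eqref{TsharpT}, using that some variable is at least $N/h$, and substitute these approximations term by term as in \eqref{TapproxU}. The cross terms are bounded with Hölder's inequality and the norm bounds of Lemma \ref{whuaPP}:
\[ \|U\|_h\ll N^{\max\{\omega-\frac1h,0\}}(\log N)^{O(1)},\qquad \|U^\sharp\|_{h-s}\ll N^{\omega-\frac1{(h-s)^*}}(\log N)^{O(1)}. \]

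The exponent bookkeeping is identical to the proof of Theorem \ref{main-PSS1}. Each error term carries a saving $N^{-\eta}$ with $\eta>0$ coming from $r(P^*-\frac1h)$ or $(j-r)\big(\frac1{(h-s)^*}-\frac1h\big)$, minus $(h-j)\delta$. These savings are at least $\frac1{h(h-1)}-(h-1)\delta>0$, because $h\geq P^{*-1}+1$ and $\delta<1/h(h-1)^2$. Power savings beat the logarithmic losses, so all error terms are $O(N^{h\omega-1}(\log N)^{-R})$. The main term recombines, by the same telescoping identity applied to $U,U^\sharp$, into $c^{-h}$ times the $\P^k$ weighted count. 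We then apply the quoted result; its $\delta$-range contains ours since $\frac1{h(h-1)^2}\leq\frac{h-2H_0}{2h(h-1)H_0}$.

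I expect the main obstacle to be bounding $E(x)$ when $\omega<P^*$: there the error $x^{o(1)}$, including the $\log x$ from $\omega=P^*$, must be controlled. In that regime it is a constant or a power of $\log$, which is harmless against the power saving.
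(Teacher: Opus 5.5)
Your proposal is correct and follows the paper's proof essentially step for step: the transfer from $\P^k_{(c)}$ to $\P^k$ via Lemma~\ref{PSWGequiv}, the $T^\sharp/T$ expansion \eqref{Texpand}, H\"older's inequality with the norm bounds of Lemma~\ref{whuaPP}, and the same exponent bookkeeping giving a saving of at least $\frac{1}{h(h-1)}-(h-1)\delta>0$ before invoking Theorem~\ref{taflem2}. Your normalisation check is also right: Theorem~\ref{taflem2} has no factor $k^{-h}$ because $\log p^k=k\log p$, so only the factor $c^{-h}$ from the transfer survives.
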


 As in the case of $\N^k$, the proof of Theorem \ref{main-PSP} will be based on \eqref{PSWG}, which we first generalize to weighted sums.
 
 \begin{lem}\label{PSWGequiv}
  Suppose \eqref{PSWG} holds, and let $\omega \in \R$. Then, uniformly for $\alpha\in[0,1)$,  
  \begin{equation*}
   \sum_{n\leq x} n^{\omega}\,\frac{\mathbbm{1}_{\P^k_{(c)}}(n)}{n^{1/ck}}\, (\log n)\, e(n\alpha) 
   = \frac{1}{c} \sum_{n\leq x} n^{\omega}\,\frac{\mathbbm{1}_{\P^k}(n)}{n^{1/k}}\, (\log n)\, e(n\alpha) + O(x^{\max\{\omega-P^{*}(c,k),\,0\}}\log x).
  \end{equation*}
  Moreover, for any $C\geq 1$,
  \begin{equation*}
   \sum_{x/C\leq n\leq x} n^{\omega}\,\frac{\mathbbm{1}_{\P^k_{(c)}}(n)}{n^{1/ck}}\,(\log n)\, e(n\alpha) 
   = \frac{1}{c} \sum_{x/C\leq n\leq x} n^{\omega}\,\frac{\mathbbm{1}_{\P^k}(n)}{n^{1/k}}\,(\log n)\, e(n\alpha) 
   + O_C(x^{\omega-P^{*}(c,k)}).
  \end{equation*}
 \end{lem}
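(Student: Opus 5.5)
The proof follows Lemmas \ref{trnsfr-PSS} and \ref{trnsfr-PSS2} almost verbatim, with the logarithmic weight carried inside the summatory functions. Write $P^*=P^*(c,k)$ and set
\[ g(\alpha;x) := \sum_{n\leq x}\mathbbm{1}_{\P^k_{(c)}}(n)\,(\log n)\,e(n\alpha),\qquad G(\alpha;x) := \frac{1}{c}\sum_{n\leq x} n^{(\frac{1}{c}-1)\frac{1}{k}}\,\mathbbm{1}_{\P^k}(n)\,(\log n)\,e(n\alpha). \]
The hypothesis \eqref{PSWG} says exactly that $R(\alpha;x):=g(\alpha;x)-G(\alpha;x)\ll x^{\frac{1}{ck}-P^*}$ uniformly in $\alpha\in[0,1)$. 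The sequence whose partial sums are $R(\alpha;t)$ is
\[ b(n):=\Big(\mathbbm{1}_{\P^k_{(c)}}(n)-\tfrac1c n^{(\frac1c-1)\frac1k}\mathbbm{1}_{\P^k}(n)\Big)(\log n)\,e(n\alpha). \]
Multiplying $b(n)$ by $n^{\omega-\frac{1}{ck}}$ gives precisely the difference of the two sums in the statement, because $n^{\omega-\frac1{ck}}\cdot n^{(\frac1c-1)\frac1k}=n^{\omega-\frac1k}$. This identity is the only algebraic check needed.

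For the first estimate, I apply Abel summation with the smooth weight $t^{\omega-\frac1{ck}}$:
\[ \sum_{n\leq x} n^{\omega-\frac1{ck}}b(n) = x^{\omega-\frac{1}{ck}}R(\alpha;x) - (\omega-\tfrac{1}{ck})\int_1^x t^{\omega-\frac1{ck}-1}R(\alpha;t)\,\mathrm{d}t. \]
Inserting the uniform bound for $R$, this is
\[ \ll x^{\omega-P^*}+\int_1^x t^{\omega-P^*-1}\,\mathrm{d}t. \]
I then split into three cases. If $\omega>P^*$, the bound is $\ll x^{\omega-P^*}$. If $\omega=P^*$, it is $\ll\log x$. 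If $\omega<P^*$, it is $\ll 1$. In all three cases it is $O(x^{\max\{\omega-P^*,0\}}\log x)$. The $\log x$ is needed only in the boundary case, and it is the reason the statement carries an explicit $\log x$ rather than $x^{o(1)}$. Small $t$ cause no trouble, since $R(\alpha;t)=O(1)$ there.

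For the dyadic estimate, I put $y=\lceil x/C\rceil-1$ and write the restricted sum as the difference of partial sums over $y<n\leq x$:
\[ x^{\omega-\frac1{ck}}R(\alpha;x)-y^{\omega-\frac1{ck}}R(\alpha;y)-(\omega-\tfrac1{ck})\int_y^x t^{\omega-\frac1{ck}-1}R(\alpha;t)\,\mathrm{d}t. \]
Since $y\asymp_C x$, each boundary term is $\ll_C x^{\omega-P^*}$. The integral is over an interval of length $\ll x$ with integrand $\ll_C x^{\omega-P^*-1}$, so it is also $\ll_C x^{\omega-P^*}$. No logarithm appears here. There is no real obstacle in either part; the only points needing care are the exponent bookkeeping noted above and the boundary case $\omega=P^*$, which produces the logarithm.
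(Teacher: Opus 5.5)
Your proposal is correct and follows the paper's proof essentially verbatim. You use the same $g$, $G$ and $R$ and the same Abel summation against $t^{\omega-\frac{1}{ck}}$. You also reproduce the three-case analysis that puts the $\log x$ only at $\omega=P^{*}(c,k)$, and the same treatment of the interval sum via $y=\lceil x/C\rceil-1$ with $y\asymp_C x$.
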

 \begin{proof}
  The proof is similar to the proofs of Lemmas \ref{trnsfr-PSS} and \ref{trnsfr-PSS2}. Write $P^{*}=P^{*}(c,k)$, and set
  \[ g(\alpha;x) = \sum_{n\leq x} \mathbbm{1}_{\P^k_{(c)}}(n)\, (\log n)\,e(n\alpha), \quad G(\alpha;x) = \frac{1}{c}\sum_{n\leq x} n^{(\frac{1}{c}-1)\frac{1}{k}}\,\mathbbm{1}_{\P^k}(n)\, (\log n)\,e(n\alpha). \]
  By \eqref{PSWG}, $R(\alpha;x):=g(\alpha;x)-G(\alpha;x)\ll x^{\frac{1}{ck}-P^{*}}$ uniformly for $\alpha\in[0,1)$. Put $W(t):=t^{\omega-\frac{1}{ck}}$. By partial summation,
  \begin{align*}
   &\sum_{n\leq x} n^{\omega}\,\frac{\mathbbm{1}_{\P^k_{(c)}}(n)}{n^{1/ck}}\,(\log n)\,e(n\alpha)-\frac{1}{c}\sum_{n\leq x} n^{\omega}\,\frac{\mathbbm{1}_{\P^k}(n)}{n^{1/k}}\,(\log n)\,e(n\alpha) \\
   &\hspace{+15em}= x^{\omega-\frac{1}{ck}}R(\alpha;x)-(\omega-\tfrac{1}{ck})\int_1^x t^{\omega-\frac{1}{ck}-1}R(\alpha;t)\,\mathrm{d}t \\
   &\hspace{+15em}\ll x^{\omega-P^{*}}+\int_1^x t^{\omega-P^{*}-1}\,\mathrm{d}t.
  \end{align*}
  If $\omega>P^{*}$, the last expression is $\ll x^{\omega-P^{*}}$. If $\omega=P^{*}$, it is $\ll \log x$. If $\omega<P^{*}$, it is $\ll 1$. Hence it is always $O(x^{\max\{\omega-P^{*},\,0\}}\log x)$.

  For the interval estimate, put $y=\lceil x/C\rceil - 1$. Partial summation gives
  \begin{align*}
   &\sum_{y<n\leq x} n^{\omega}\,\frac{\mathbbm{1}_{\P^k_{(c)}}(n)}{n^{1/ck}}\,(\log n)\,e(n\alpha)-\frac{1}{c}\sum_{y<n\leq x} n^{\omega}\,\frac{\mathbbm{1}_{\P^k}(n)}{n^{1/k}}\,(\log n)\,e(n\alpha) \\
   &\hspace{+10em}= x^{\omega-\frac{1}{ck}}R(\alpha;x) - y^{\omega-\frac{1}{ck}} R(\alpha;y) - (\omega-\tfrac{1}{ck})\int_y^x t^{\omega-\frac{1}{ck}-1} R(\alpha;t)\,\mathrm{d}t \\
   &\hspace{+10em}\ll_C x^{\omega-P^{*}}+\int_{\lceil x/C\rceil-1}^x t^{\omega-P^{*}-1}\,\mathrm{d}t \\
   &\hspace{+10em}\ll_C x^{\omega-P^{*}}. \qedhere
  \end{align*}
 \end{proof}

 Note that the extra factor of $\log$ in the error term of Lemma \ref{PSWGequiv} arises only in the borderline case $\omega = P^{*}(c,k)$; for $\omega \neq P^{*}(c,k)$ it may be omitted. This will not affect what follows.
 
 The final ingredient is an asymptotic formula for weighted representation counts of $\P^k$. 
  
 \begin{thm}[{\cite[Theorem 1.2]{tafWWG}}]\label{taflem2}
  Let $k\geq 1$, let $H_0=H_0(k)$ be as in \eqref{h0k1}, and let $h\geq 2H_0(k)+1$. Let $\delta>0$ be any real number with 
  \[ \delta < \frac{h-2H_0}{2 h(h-1) H_0}. \]
  Then, for any $\omega \geq 1/h - \delta$,
  \begin{equation*}
   \begin{aligned}
    \sum_{\substack{x_1,\ldots,x_h \in \P^k \\ x_1+\cdots+x_h = N}} (x_1\cdots x_h)^{\omega-\frac{1}{k}}(\log x_1 \cdots \log x_h) = \mathfrak{S}^{*}_{k,h}(N)\,\frac{\Gamma(\omega)^h}{\Gamma(h\omega)} \, N^{h\omega-1} + O_R\bigg(\frac{N^{h\omega -1}}{(\log N)^{R}}\bigg)
   \end{aligned}
  \end{equation*}
  for every $R>1$, where the singular series $\mathfrak{S}^{*}_{k,h}(N)$ is defined as in \eqref{singserp}.
 \end{thm}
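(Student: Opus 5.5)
The plan is to run the same weighted circle-method scheme used for Theorem \ref{MTcest}, now for $\P^k$. Put
\[ T(\alpha;x)=\sum_{n\leq x} n^{\omega}\frac{\mathbbm{1}_{\P^k}(n)}{n^{1/k}}(\log n)\,e(n\alpha),\qquad T^{\sharp}(\alpha;x)=\sum_{x/h\leq n\leq x}(\cdots). \]
Since every solution of $x_1+\cdots+x_h=N$ has some $x_i\geq N/h$, orthogonality gives the identity \eqref{pss-expand} with these sums. So it suffices to evaluate $\int T^{\sharp}(\alpha;N)^jT(\alpha;N)^{h-j}e(-N\alpha)\,\mathrm{d}\alpha$ for $1\leq j\leq h$. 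Because the target error is only a logarithmic saving $(\log N)^{-R}$, the dissection will be a Siegel--Walfisz one. Take $Q=(\log N)^{B}$ with $B=B(R)$ large. The major arcs $\mathfrak M$ are the intervals $|\alpha-a/q|\leq Q/N$ with $q\leq Q$ and $(a,q)=1$, and $\mathfrak m$ is the complement.

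\textbf{Major arcs.} On $\mathfrak M(q,a)$, write $p^k$ in residue classes modulo $q$ and apply Siegel--Walfisz with partial summation. This should give
\[ T(\alpha;N)=\frac{S^*(q,a)}{\varphi(q)}\,V(\alpha-a/q;N)+O\big(N^{\omega}(\log N)^{-A}\big) \]
for every $A$, where $V(\beta;x)=\frac1k\sum_{m\leq x}m^{\omega-1}e(m\beta)$; the analogous formula holds for $T^{\sharp}$. The factor $\frac1k$ comes from $\log p=\frac1k\log p^k$ and the density of $k$-th powers. Multiplying out and integrating over $\mathfrak M$ produces a truncated singular series times truncated singular integrals. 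The bounds for $V$ from \cite[Lemma 4.6]{tafWWG}, quoted in the proof of Proposition \ref{major-pss}, let us complete the integrals to $[0,1]$. The resulting singular integral is $k^{-h}\Gamma(\omega)^h\Gamma(h\omega)^{-1}N^{h\omega-1}(1+O(N^{-\eta}))$ by \cite[Lemma 3.5]{taf25}. Completing the singular series needs the standard bound $S^*(q,a)/\varphi(q)\ll q^{-1/2+\eps}$ for the relevant $q$ (Hua's estimate for complete sums), so that the tail $q>Q$ is $O(Q^{-\eta})$ because $h\geq 2H_0+1\geq 3$. The factors of $k$ should cancel, giving $\mathfrak S^*_{k,h}(N)\Gamma(\omega)^h\Gamma(h\omega)^{-1}N^{h\omega-1}$.

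\textbf{Minor arcs.} Here I need two inputs. The first is a sup bound $\sup_{\alpha\in\mathfrak m}|T^{\sharp}(\alpha;N)|\ll N^{\omega}(\log N)^{-A}$ for any $A$, given $B$ large. This follows by partial summation from the known minor-arc estimates for $\sum_{p\leq P}(\log p)e(\alpha p^k)$ (Vinogradov--Vaughan type, in the Kumchev/Harman form). On arcs with $Q<q\leq N^{\text{small}}$ one needs the Ren-type refinement, or equivalently the bound $q^{-1/2}N^{\omega}$ plus $N^{\omega}$ times a power saving. The second input is the mean value bound from Lemma \ref{whuaPP}, namely $\int_0^1|T^{\sharp}|^{2H_0}\ll N^{2H_0\omega-1}(\log N)^{O(1)}$, together with the general $\ell$-th moment bounds. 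For $\omega\geq 1/h$, I would bound the $h$-th moment of $T^{\sharp}$ on $\mathfrak m$ as $(\sup)^{h-2H_0}\cdot\Upsilon_1$ and then apply H\"older, exactly as in Proposition \ref{minor-pss}. For $1/h-\delta\leq\omega<1/h$, I would use Wooley's H\"older split \eqref{pss-holder2} with $\theta=1-(h-1)\omega$. The constraint on $\delta$ guarantees $1-2H_0\theta>0$, so the sup bound still contributes a positive power of $(\log N)^{-A}$ while the $\Upsilon$'s lose only logarithms. Choosing $A$ large in terms of $R$ and the logarithmic exponents $M,H_0$ gives $O(N^{h\omega-1}(\log N)^{-R})$.

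\textbf{Main obstacle.} The main obstacle is the minor-arc sup bound for $T^{\sharp}$ uniformly on the whole of $\mathfrak m$ down to $q\sim(\log N)^B$. For prime $k$-th powers, the classical Vaughan-identity bounds save only a small power on genuine minor arcs. The intermediate range of $q$ must be handled by a pruning argument: extend Siegel--Walfisz-type approximations to $q\leq N^{\varpi}$ with the factor $q^{-1/2+\eps}$ (Kumchev's or Liu's enlarged major arc results), and combine this with the $2H_0$-th moment. That pruning step is where I expect the real work to lie.
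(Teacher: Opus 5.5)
The paper gives no proof of this statement; it is imported from \cite[Theorem 1.2]{tafWWG}. Your skeleton is the standard argument, and it is the same one the paper runs for Theorem \ref{MTcest}. That skeleton consists of the $T/T^{\sharp}$ expansion, Siegel--Walfisz major arcs with $q\leq (\log N)^B$, and, on $\mathfrak m$, a sup bound for $T^{\sharp}$ fed into Lemma \ref{whuaPP} via plain H\"older for $\omega\geq 1/h$ and Wooley's split \eqref{pss-holder2} for $1/h-\delta\leq\omega<1/h$. It does go through, but three points in your write-up need repair.

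\textbf{The normalisation is off.} The weight is $\log n=\log p^k=k\log p$. This factor $k$ cancels the $1/k$ from the density of $k$-th powers already inside the major arc approximation. Siegel--Walfisz and the substitution $u=t^k$ give $T(a/q+\beta;N)=\frac{S^*(q,a)}{\varphi(q)}\sum_{m\leq N}m^{\omega-1}e(m\beta)+O(N^{\omega}(\log N)^{-A})$, with no $1/k$. With your $V$ the main term would come out as $k^{-h}\mathfrak{S}^{*}_{k,h}(N)\Gamma(\omega)^h\Gamma(h\omega)^{-1}N^{h\omega-1}$. Nothing in $\mathfrak{S}^{*}_{k,h}$ cancels the $k^{-h}$, so ``the factors of $k$ should cancel'' is not true as written.

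\textbf{The singular series tail fails for $k=1$.} The bound $S^*(q,a)/\varphi(q)\ll q^{-1/2+\eps}$ makes $\sum_{q}\varphi(q)\max_a|S^*(q,a)/\varphi(q)|^h$ converge only for $h\geq 5$. That covers $k\geq 2$, where $2H_0+1\geq 5$. It does not cover $k=1$ with $h\in\{3,4\}$. There you should use $S^*(q,a)=\mu(q)$, which gives a tail $\sum_{q>Q}\mu^2(q)\varphi(q)^{1-h}\ll Q^{-1+\eps}$.

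\textbf{Your ``main obstacle'' needs no pruning.} By Dirichlet's theorem with parameter $N/Q$, each $\alpha\in\mathfrak m$ satisfies $|\alpha-a/q|\leq q^{-2}$ for some $Q<q\leq N/Q$. Hua's classical Vinogradov-type estimate (Theorem 10 of his \emph{Additive theory of prime numbers}) gives $\sum_{p\leq Y}e(\alpha p^k)\ll Y(\log Y)^{-A}$ whenever $(\log Y)^{\sigma_1}\leq q\leq Y^k(\log Y)^{-\sigma_1}$, with $\sigma_1=\sigma_1(k,A)$. Take $B$ large, apply this for $Y\in[(N/h)^{1/k},N^{1/k}]$, and sum by parts over the monotone weights $p^{k\omega-1}\log p$. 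This gives $\sup_{\mathfrak m}|T^{\sharp}(\alpha;N)|\ll N^{\omega}(\log N)^{1-A}$ directly, which is all the H\"older step needs.

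Your alternative route, a Ren-type bound with only $(\log P)^{c}$ losses plus a power-saving bound for large $q$, also works. Bounds carrying a factor $P^{\eps}$, such as the Harman or Kumchev forms you mention, give nothing for $q$ near $(\log N)^B$.
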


 We are now ready to prove Theorem \ref{main-PSP}. The argument follows the same general outline as the proof of Theorem \ref{main-PSS1}, hence the proof is streamlined.
 
%%%%% 
\subsection{Proof of Theorem \ref{main-PSP}}
 Put $P^{*}=P^{*}(c,k)$, and fix
 \[ h\geq \max\{2H_0(k)+1,\,(P^{*})^{-1}+1\}. \]
 Let $\omega\geq 1/h - \delta$, and define the exponential sums
 \[ T(\alpha;x) := \sum_{n\leq x} n^{\omega}\,\frac{\mathbbm{1}_{\P^k_{(c)}}(n)}{n^{1/ck}}\,(\log n)\,e(n\alpha),\quad T^{\sharp}(\alpha;x) := \sum_{x/h\leq n\leq x} n^{\omega}\,\frac{\mathbbm{1}_{\P^k_{(c)}}(n)}{n^{1/ck}}\,(\log n)\,e(n\alpha), \]
 \[ U(\alpha;x) := \sum_{n\leq x} n^{\omega}\,\frac{\mathbbm{1}_{\P^k}(n)}{n^{1/k}}\,(\log n)\,e(n\alpha),\quad U^{\sharp}(\alpha;x) := \sum_{x/h\leq n\leq x} n^{\omega}\,\frac{\mathbbm{1}_{\P^k}(n)}{n^{1/k}}\,(\log n)\,e(n\alpha). \]
 Since any representation $x_1+\cdots+x_h=N$ contains a summand $\geq N/h$, orthogonality yields
 \begin{equation}
  \begin{aligned}
   \sum_{\substack{x_1,\ldots,x_h \in \P_{(c)}^k \\ x_1+\cdots+x_h = N}} (x_1\cdots x_h)^{\omega-\frac{1}{ck}}&(\log x_1 \cdots \log x_h) \\
   &= \sum_{j=1}^{h} (-1)^{j+1}\binom{h}{j} \int_{0}^{1} T^{\sharp}(\alpha;N)^j T(\alpha;N)^{h-j}\,e(-N\alpha)\,\mathrm{d}\alpha. 
  \end{aligned}\label{Texpand}
 \end{equation}
 Lemma \ref{PSWGequiv} supplies the approximations
 \[ T(\alpha;x) = \frac{1}{c}\,U(\alpha;x)+O(E(x)),\qquad T^{\sharp}(\alpha;x) = \frac{1}{c}\,U^\sharp(\alpha;x)+O(E^\sharp(x)), \]
 with
 \[ E(x)\ll x^{\max\{\omega-P^{*},0\}}\log x,\qquad E^\sharp(x)\ll x^{\omega-P^{*}}, \]
 uniformly in \(\alpha\). Inserting these into \eqref{Texpand} and expanding gives
 \begin{align*}
  \int_{0}^{1}&\,T^{\sharp}(\alpha;N)^j T(\alpha;N)^{h-j}\,e(-N\alpha)\,\mathrm{d}\alpha \\[-0.5em]
   &= \frac{1}{c^h} \int_{0}^{1} U^{\sharp}(\alpha;N)^j U(\alpha;N)^{h-j}\,e(-N\alpha)\,\mathrm{d}\alpha + O\Bigg( \underset{(r,s)\neq(0,0)}{\sum_{r=0}^{j}\sum_{s=0}^{h-j}} \lVert U^{\sharp}\rVert_{h-s}^{j-r} \lVert U\rVert_h^{h-j-s} (E^{\sharp})^r E^s \Bigg).
 \end{align*} 

 From Lemma \ref{whuaPP} we know
 \[ \lVert U\rVert_h \ll N^{\max\{\omega - \frac{1}{h},0\}+o(1)},\qquad \lVert U^{\sharp}\rVert_{h-s} \ll N^{\omega - \frac{1}{(h-s)^{*}}+o(1)}, \]
 where $(h-s)^{*}:=\max\{h-s,2H_0\}$ and the $N^{o(1)}$ hides powers of $\log$. As in the proof of Theorem \ref{main-PSS1}, a short calculation shows that each term in the error is
 \[ \ll N^{h\omega - 1 - \left((j-r)(\frac{1}{(h-s)^{*}} - \frac{1}{h}) \,+\, r(P^{*}-\frac{1}{h}) \,-\, (h-j)\delta \right) \,+\, o(1) }. \]
 For $h \geq \max\{2H_0+1,\, (P^{*})^{-1}+1\}$, the term $r(P^{*}-\frac{1}{h}) \geq \frac{1}{h(h-1)}$ whenever $r\geq 1$, while if $r=0$ we still have $(j-r)(\frac{1}{(h-s)^{*}} - \frac{1}{h}) \geq \frac{1}{h(h-1)}$ since then $s\geq 1$. Since $h-j\leq h-1$ and $\delta<\frac{1}{h(h-1)^2}$ by definition, every term in the error is $O(N^{h\omega-1-\eta})$ for some $\eta = \eta(h,\delta)>0$.
 
 Substituting back into \eqref{Texpand} we conclude
 \begin{align*}
  \sum_{\substack{x_1,\ldots,x_h \in \P_{(c)}^k \\ x_1+\cdots+x_h = N}} (x_1&\cdots x_h)^{\omega-\frac{1}{ck}}(\log x_1 \cdots \log x_h) \\
  &= \frac{1}{c^h} \sum_{j=1}^{h} (-1)^{j+1}\binom{h}{j} \int_{0}^{1} U^{\sharp}(\alpha;N)^j U(\alpha;N)^{h-j}\,e(-N\alpha)\,\mathrm{d}\alpha + O(N^{h\omega-1-\eta}) \\
  &= \frac{1}{c^h} \sum_{\substack{x_1,\ldots,x_h \in \P^k \\ x_1+\cdots+x_h = N}} (x_1\cdots x_h)^{\omega-\frac{1}{k}}(\log x_1 \cdots \log x_h) + O(N^{h\omega-1-\eta}),
 \end{align*}
 so invoking Theorem \ref{taflem2} (since $\delta < \frac{1}{h(h-1)^2} \leq \frac{h-2H_0}{2h(h-1)H_0}$) completes the proof.\hfill$\square$
 
 \begin{rem}[Case $\P_{(c)}$]\label{rem-k1}
  When $k=1$, sharper bounds on the least admissible $h$ can be obtained for small $c$, using the identity\footnote{From which one deduces
  $\int_{0}^{1} T^h\,\mathrm{d}\alpha = \int_{0}^{1} U^h\,\mathrm{d}\alpha + O\big((\sup_{\alpha\in[0,1)} |T-U|)\,(\sum_{j=0}^{h-1} \|T\|_{h-1}^j \|U\|_{h-1}^{h-1-j})\big)$.}
  \[ T^h = U^h + (T-U)\bigg(\sum_{j=0}^{h-1} T^j U^{h-1-j}\bigg). \]
  In particular, for $1<c<53/50$, Kumchev \cite[Theorem 2]{kum97} applied with $\delta = 1-1/c$ gives
  \[ \sum_{n\leq x}\mathbbm{1}_{\P_{(c)}}(n)\, e(n\alpha) = \frac{1}{c} \sum_{n\leq x} n^{1/c-1}\,\mathbbm{1}_{\P}(n)\,e(n\alpha) + O_{\eps}(x^{2/c-1-\eps}) \]
  uniformly in $\alpha$. It follows inductively that
  \[ \int_{0}^{1} \bigg|\sum_{n\leq x} \mathbbm{1}_{\P_{(c)}}(n)\, e(n\alpha) \bigg|^h\,\mathrm{d}\alpha \ll x^{h/c - 1 + o(1)} \]
  for every $h\geq 3$. Hence, for such $c$, $\P_{(c)}$ satisfies \eqref{hua-type} with $H_0(\P_{(c)})=2$. This in turn allows one to deduce Theorem \ref{main-PSP} for all $h\geq 3$ when $k=1$, but only under the weaker condition $\omega \geq 1/(h-1)-\eps$ (thus not reaching the critical case $\omega = 1/h$, required for thin subbases).
 \end{rem}

\subsection{Proof of Theorem \ref{MTPSPP}}
 The Piatetski-Shapiro prime number theorem
 \[ |\P^{k}_{(c)}\cap[1,x]| = |\P_{(c)}\cap[1,x^{1/k}]| \sim \frac{kx^{1/ck}}{\log x} \]
 follows from \eqref{PSWG} by partial summation (setting $\alpha=0$), so $\P^{k}_{(c)}$ has regularly varying counting function. By Proposition \ref{PSPhua}, it also satisfies \eqref{hua-type} with $H_0(\P^{k}_{(c)})=\max\{H_0(k),\,\lceil\tfrac{1}{2} (P^{*})^{-1}\rceil\}$. Theorem \ref{main-PSP} gives \eqref{mainest} with $\mathscr{S}=K(k)\cdot\N+h$ for $h\geq \max\{2H_0(k)+1, (P^{*})^{-1}+1\}$, where $K(k)$ is defined as in \eqref{defkk} (since $\mathfrak{S}^{*}_{k,h}(N) \asymp 1$ for $N\equiv h\pmod{K(k)}$ by \cite[Lemma 5.6]{tafWWG}). Hence, taking $h\geq \max\{2H_0(k)+1,\,2\lceil\tfrac{1}{2} (P^{*})^{-1}\rceil+1\}$, Theorem \ref{main-tech} yields Theorem \ref{MTPSPP}. \hfill$\square$

%%%%%%%%%%%%%%%%%%%%%%
\addtocontents{toc}{\protect\setcounter{tocdepth}{0}}
\section*{Acknowledgements}
 I would like to thank Sam Chow for suggesting, during my PhD defense, that I consider the problem of thin subbases in Piatetski-Shapiro sequences. I acknowledge the support of the S\~ao Paulo Research Foundation (FAPESP), Brazil, Process No.~2025/15961-3.
 
\addtocontents{toc}{\protect\setcounter{tocdepth}{1}}
%%%%%%%%%%%%%%%%%%%%%%

% ----------------------------------------------------------------
\bibliographystyle{amsplain}
\bibliography{$HOME/Academie/Recherche/_latex/bibliotheca}%
\end{document}